\renewcommand\@seccntformat[1]{\csname the#1\endcsname.\quad}
\newtheorem{theorem}{Theorem}[section]
\newtheorem{corollary}[theorem]{Corollary}
\newtheorem{example}[theorem]{Example}
\newtheorem{lemma}[theorem]{Lemma}
\newtheorem{proposition}[theorem]{Proposition}
\newtheorem{remark}[theorem]{Remark}
\newcommand{\norm}[3]{\ensuremath{\left\Vert#1\right\Vert_{#2}^{#3}}}
\newcommand{\abs}[3]{\ensuremath{\left\vert#1\right\vert_{#2}^{#3}}}
\DeclareMathOperator{\supp}{supp}
\DeclareMathOperator{\dist}{dist}
\DeclareMathOperator{\sg}{sgn}
\DeclareMathOperator{\Card}{Card}
\begin{document}

\author[M. Ciesielski]{Maciej Ciesielski }

\title[Relationships between $K$-monotonicity and rotundity properties]{Relationships between $K$-monotonicity and rotundity properties with application}

\begin{abstract}
	In this paper we investigate a relationship between fully $k$-rotundity properties, uniform $K$-monotonicity properties, reflexivity and $K$-order continuity in a symmetric spaces $E$. We also answer a crucial question whether fully $k$-rotundity properties might be restricted in definition to $E^d$ the positive cone of all nonnegative and decreasing elements of $E$. 
	We present a complete characterization of decreasing uniform $K$-monotonicity and $K$-order continuity in $E$. 
	It is worth mentioning that we also establish several auxiliary results describing reflexivity in Lorentz spaces $\Gamma_{p,w}$ and $K$-order continuity in Orlicz spaces $L^\psi$. Finally, we show an application of discussed geometric properties to the approximation theory.
	
\end{abstract}

\maketitle

\bigskip\ 

{\small \underline{2000 Mathematics Subjects Classification: 46E30, 46B20, 46B28.}\hspace{1.5cm}\
\ \ \quad\ \quad  }\smallskip\ 

{\small \underline{Key Words and Phrases:}\hspace{0.15in} Decreasing (increasing) uniform $K$-monotonicity, $K$-order continuity, strict $K$-monotonicity, fully $k$-rotundity, symmetric space, the best approximation.}

\bigskip\ \ 

\section{Introduction}

In 1955, K. Fan and I. Glicksberg \cite{FaGl} introduced and characterized fully $k$-rotundity properties in Banach spaces. Recently, in the papers \cite{CHK,HuKoLe}, there have been shown, among others, a complete correspondence between fully $k$-rotundity properties, rotundity and reflexivity with an application to the approximation theory in Banach spaces. The next interesting results were published in \cite{CerHudzKamMas}, where authors have investigated, inter alia, rotundity properties on $E^d$ the positive cone of all nonnegative and decreasing elements of a $K$-monotone symmetric space $E$. The further  motivation of our investigation can be found in \cite{Cies-K-mono,Cies-SKM&KOC,Cies-JAT,CieLewUKM}, where authors have presented a correspondence and complete criteria for $K$-order continuity, strict $K$-monotonicity and uniform $K$-monotonicity properties with application to the best dominated approximation problems in the sense of
the Hardy-Littlewood-P\'olya relation. The main idea of this paper is to find a relationship between reflexivity, fully $k$-rotundity properties and uniform $K$-monotonicity properties with application to the approximation theory. In view of the previous research, we also focus on full criteria for $K$-order continuity and fully $k$-rotundity properties in symmetric spaces.

The article is organized as follows. Section 2 contains all the necessary definitions and notation, which are used in our discussion. 
In Section 3, we show a relationship between $K$-order continuity and a nonexistence of the embedding $E\hookrightarrow{}L^1[0,\infty)$, for a symmetric space $E$ on $[0,\infty)$. In view of this result, we present complete criteria for $K$-order continuity in the Orlicz space $L^\psi$. We also establish a key correspondence between decreasing uniform $K$-monotonicity, $K$-order continuity and upper local uniform $K$-monotonicity in symmetric spaces. 
Section 4 is devoted to (compact) local fully $k$-rotundity properties. Namely, using the local approach we characterize an essential connection between local fully $k$-rotundity on $E^+$ the positive cone of all nonnegative elements of a symmetric space $E$ and $E^d$ the positive cone of all nonnegative and decreasing elements of $E$. Next, we discuss an interesting relationship between local fully $k$-rotundity, local uniform rotundity and order continuity. We also research under which condition compact local fully $k$-rotundity concludes upper local uniform $K$-monotonicity and order continuity. 
In section 5, we answer the crucial question whether compact fully $k$-rotundity on $E^d$ the positive cone of all nonnegative and decreasing elements of a symmetric space $E$ implies order continuity and reflexivity of $E$. It is worth mentioning that the above result improves Proposition 1 in \cite{CHK}, a similar problem that was proved under stronger assumption in a Banach space. We also investigate under which criteria decreasing (resp. increasing) uniform $K$-monotonicity follows directly from compact fully $k$-rotundity on $E^d$. In view of the research published in \cite{CerHudzKamMas}, we focus on the interesting question under which condition compact fully $k$-rotundity might only be considered on the positive cone $E^d$. Finally, in the spirit of the characterization given in section 4, we show that compact fully $k$-rotundity on $E^d$ yields $K$-order continuity in a symmetric space $E$.
Section 6 is dedicated to an application of a geometric structure in symmetric spaces to the approximation theory. First, we establish a complete characterization of approximative compactness in symmetric spaces given in terms of reflexivity, strict $K$-monotonicity, upper local uniform $K$-monotonicity and the Kadec-Klee property for global convergence in measure, respectively. Next, we present an equivalent condition for reflexivity of Lorentz spaces $\Gamma_{p,w}$ expressed in terms of a weight function $w$ for $1<p<\infty$. We also discuss auxiliary examples of Lorentz spaces $\Gamma_{p,w}$ that are reflexive and approximatively compact. It is worth noticing that, the final discussion is devoted to the best dominated approximation problem with respect to the Hardy-Littlewood-P\'olya relation in the Orlicz spaces $L^\psi$ and follows from a general description of $K$-order continuity (see \cite{Cies-JAT}).

\section{Preliminaries}

Let $\mathbb{R}$, $\mathbb{R}^+$ and $\mathbb{N}$ be the sets of reals, nonnegative reals and positive integers, respectively.  A mapping $\phi:\mathbb{R}^+\rightarrow\mathbb{R}^+$ is said to be \textit{quasiconcave} if $\phi(t)$ is increasing and $\phi(t)/t$ is decreasing on $\mathbb{R}^+$ and also $\phi(t)=0\Leftrightarrow{t=0}$. We denote by $\mu$ the Lebesgue measure on $I=[0,\alpha)$, where $\alpha =1$ or $\alpha =\infty$, and by $L^{0}$ the set of all (equivalence classes of) extended real valued Lebesgue measurable functions on $I$.
We denote by $S_X$ (resp. $B_X)$ the unit sphere (resp. the closed unit ball) in a Banach space $(X,\norm{\cdot}{X}{})$. A Banach lattice $(E,\Vert \cdot \Vert _{E})$ is said to be a \textit{Banach function space} (or a \textit{K\"othe space}) if it is a sublattice of $L^{0}
$ and holds the following conditions
\begin{itemize}
\item[(1)] If $x\in L^0$, $y\in E$ and $|x|\leq|y|$ a.e., then $x\in E$ and $%
\|x\|_E\leq\|y\|_E$.
\item[(2)] There exists a strictly positive $x\in E$.
\end{itemize}
For simplicity let us use the short symbol $E^{+}={\{x \in E:x \ge 0\}}$. 
An element $x\in E$ is called a \textit{point of order continuity}, shortly $x\in{E_a}$, if for any
sequence $(x_{n})\subset{}E^+$ such that $x_{n}\leq \left\vert x\right\vert 
$ and $x_{n}\rightarrow 0$ a.e. we have $\left\Vert x_{n}\right\Vert
_{E}\rightarrow 0.$ A Banach function space $E$ is said to be \textit{order continuous}, shortly $E\in \left( OC\right)$, if any element $x\in{}E$ is a point of order continuity. A space $E$ is said to be \textit{reflexive} if $E$ and its associate space $E'$ are order continuous. Given a Banach function space $E$ is said to have the \textit{Fatou property} if for all $\left( x_{n}\right)\subset{}E^+$, $\sup_{n\in \mathbb{N}}\Vert x_{n}\Vert
_{E}<\infty$ and $x_{n}\uparrow x\in L^{0}$, then $x\in E$ and $\Vert x_{n}\Vert _{E}\uparrow\Vert x\Vert
_{E}$. A space $E$ has the \textit{semi-Fatou property} $(E\in(s-FP))$ if for any $(x_n)\subset{E^+}$ such that $x_n\uparrow{x}\in{E^+}$, we have $\norm{x_n}{E}{}\uparrow
\norm{x}{}{}$ (see \cite{LinTza,BS}). In the whole paper, we assume that $E$ has the Fatou property, unless we say otherwise. 

A Banach space $X$ is called \textit{rotund} or \textit{strictly convex} if for any $x,y\in{}S_X$ such that $\norm{x+y}{X}{}=2$, we have $x=y$. 
A point $x\in{X}$ is said to be a \textit{point of local uniform rotundity}, shortly a $LUR$ \textit{point}, if for any sequence $(x_n)\subset{X}$ such that $\norm{x_n+x}{X}{}\rightarrow{2}\norm{x}{X}{}$ and $\norm{x_n}{X}{}\rightarrow\norm{x}{X}{}$ we have $\norm{x_n-x}{X}{}\rightarrow{0}$. A Banach space $X$ is said to be \textit{locally uniformly rotund}, shortly $X$ is $LUR$, if every element $x\in{X}$ is a $LUR$ point in $X$.
Let $k\in\mathbb{N},$ $k\geq{2}$. We say that a Banach space $X$ is \textit{fully $k$-rotund}, shortly $X$ is $FkR$, if each sequence $(x_n)\subset{S_X}$ such that $\norm{\sum_{i=1}^{k}x_{n,{i}}}{X}{}\rightarrow{k}$ for any its $k$-subsequences $(x_{n,{1}}),(x_{n,{2}}),\cdots,(x_{n,{k}})$, is a Cauchy sequence. A Banach space $X$ is said to be \textit{compactly fully $k$-rotund}, shortly $X$ is $CFkR$, if each sequence $(x_n)\subset{S_X}$ such that $\norm{\sum_{i=1}^{k}x_{n,{i}}}{X}{}\rightarrow{k}$ for any its $k$-subsequences $(x_{n,{1}}),(x_{n,{2}}),\cdots,(x_{n,{k}})$, forms a relatively compact set. In case when $k=2$ and a Banach space $X$ is $F2R$ (resp. $CF2R$), then we say that $X$ is \textit{fully rotund} or $FR$ (resp. \textit{compactly fully rotund} or $CFR$). A point $x\in{S_X}$ is said to be a \textit{point of local fully $k$-rotundity}, shortly $x$ is {a point of $LFkR$}, (resp. a \textit{point of compact local fully $k$-rotundity}, shortly $x$ is a point of $CLFkR$) if for each sequence $(x_n)\subset{S_X}$ such that $\norm{x+\sum_{i=1}^{k}x_{n,i}}{X}{}\rightarrow{k+1}$ (see \cite{LinTza,BS}) for any its $k$-subsequences $(x_{n,1})$,$(x_{n,2})$,$\cdots,(x_{n,k})$, we have $x_n$ converges to $x$ in $X$ (resp. $(x_n)$ forms a relatively compact set). A Banach space $X$ is called \textit{locally fully $k$-rotund}, shortly $X$ is $LFkR$, (resp. \textit{compactly locally fully $k$-rotund}, shortly $X$ is $CLFkR$) if every point $x\in{S_X}$ is a point of $LFkR$ (resp. a point of $CLFkR$) (see \cite{FaGl,CHK,HuKoLe}). 

A point $x\in{E}$ is called an $H_g$ point (resp. $H_l$ point) in $E$ if for any $(x_n)\subset{E}$ such that $x_n\rightarrow{x}$ globally (resp. locally) in measure and $\left\Vert x_n\right\Vert _{E}\rightarrow\left\Vert x\right\Vert _{E}$, we have $\left\Vert x_n-x\right\Vert _{E}\rightarrow{0}$. We say that the space $E$ has the Kadec-Klee property globally (resp. locally) in measure if each $x\in{E}$ is an $H_g$point (resp. $H_l$ point) in $E$. A Banach space $E$ has the Kadec-Klee property if for any $(x_n)\subset{E}$ and for any $f$ in the dual space ${E^*}$ of $E$, we have
\begin{equation*}
f(x_n)\rightarrow{f(x)}\quad\textnormal{and}\quad\norm{x_n}{E}{}\rightarrow\norm{x}{E}{}\quad\Rightarrow\quad\norm{x_n-x}{E}{}.
\end{equation*}
The \textit{distribution function} for any function $x\in L^{0}$ is defined by 
\begin{equation*}
d_{x}(\lambda) =\mu\left\{ s\in [ 0,\alpha) :\left\vert x\left(s\right) \right\vert >\lambda \right\},\qquad\lambda \geq 0.
\end{equation*}
For any function $x\in L^{0}$ its \textit{decreasing rearrangement} is given by 
\begin{equation*}
x^{\ast }\left( t\right) =\inf \left\{ \lambda >0:d_{x}\left( \lambda
\right) \leq t\right\}, \text{ \ \ } t\geq 0.
\end{equation*}
In this article we use the notation $x^{*}(\infty)=\lim_{t\rightarrow\infty}x^{*}(t)$ if $\alpha=\infty$ and $x^*(\infty)=0$ if $\alpha=1$. For any function $x\in L^{0}$ we denote the \textit{maximal function} of $x^{\ast }$ by 
\begin{equation*}
x^{\ast \ast }(t)=\frac{1}{t}\int_{0}^{t}x^{\ast }(s)ds.
\end{equation*}
It is well known that for any point $x\in L^{0}$,  $x^{\ast }\leq x^{\ast \ast },$ $x^{\ast \ast }$ is decreasing, continuous and subadditive. For more details of $d_{x}$, $x^{\ast }$ and $x^{\ast \ast }$ see \cite{BS, KPS}. 

We say that two functions $x,y\in{L^0}$ are \textit{equimeasurable}, shortly $x\sim y$, if $d_x=d_y$. A Banach function space $(E,\Vert \cdot \Vert_{E}) $ is called \textit{symmetric} or \textit{rearrangement invariant} (r.i. for short) if whenever $x\in L^{0}$ and $y\in E$ such that $x \sim y,$ then $x\in E$ and $\Vert x\Vert_{E}=\Vert y\Vert _{E}$. The \textit{fundamental function} $\phi_E$ of a symmetric space $E$ we define as follows $\phi_{E}(t)=\Vert\chi_{(0,t)}\Vert_{E}$ for any $t\in [0,\alpha)$ (see \cite{BS}). For any two functions $x,y\in{}L^{1}+L^{\infty }$ the \textit{Hardy-Littlewood-P\'olya relation} $\prec$ is defined by 
\begin{equation*}
x\prec y\quad\Leftrightarrow\quad x^{**}(t)\leq y^{**}(t)\quad\text{ for
all}\quad t>0.\text{ }
\end{equation*}
In this paper we denote the cone of all decreasing rearrangements of elements in symmetric space $E$ by $E^d=\{x^*:x\in{E}\}$.

Now let us introduce shortly $K$-monotonicity properties.
Given a symmetric space $E$ is called $K$-\textit{monotone}, shortly $E\in(KM)$,
if for any $x\in L^{1}+L^{\infty}$ and $y\in E$ with $x\prec y,$ then $x\in E$ and $\Vert x\Vert_{E}\leq \Vert y\Vert _{E}.$ Recall that a symmetric space $E$ is $K$-monotone if and only if $E$ is exact interpolation space between $L^{1}$ and $L^{\infty }.$ Let us also mention the well known fact that a symmetric space $E$ equipped with an order continuous norm or with the Fatou property is $K$-monotone (for more details see \cite{KPS}).
A point $x\in{E}$ is called a \textit{point of upper $K$-monotonicity} (resp. \textit{point of lower $K$-monotonicity}) for short a $UKM$ \textit{point} (resp. an $LKM$ \textit{point}) of $E$ whenever for each $y\in{E}$, $x^*\neq{y^*}$ with $x\prec{y}$ (resp. with $y\prec{x}$), we have $\norm{x}{E}{}<\norm{y}{E}{}$ (resp. $\norm{y}{E}{}<\norm{x}{E}{}$). Let us also remind that a rearrangement invariant space $E$ is said to be \textit{strictly $K$-monotone}, shortly $E\in(SKM)$, if any element of $E$ is a $UKM$ point or equivalently if any element of $E$ is an $LKM$ point.

Given $x\in{E}$ is said to be a \textit{point of $K$-order continuity} of $E$ if for any sequence $(x_n)\subset{E}$ with $x_n\prec{x}$ and $x_n^*\rightarrow{0}$ a.e. we have $\norm{x_n}{E}{}\rightarrow{0}$. In fact, a symmetric space $E$ is called $K$-\textit{order continuous}, shortly $E\in\left(KOC\right)$, if any element $x$ of $E$ is a point of $K$-order continuity.

An element $x\in{E}$ we call a \textit{point of upper local uniform $K$-monotonicity} of $E$, shortly a $ULUKM$ \textit{point}, if for any $(x_n)\subset{E}$ with $x\prec{x_n}$ for any $n\in\mathbb{N}$ and $\norm{x_n}{E}{}\rightarrow\norm{x}{E}{}$, we have $\norm{x^*-x_n^{*}}{E}{}\rightarrow{0}$. Given $x\in{E}$ is said to be a \textit{point of lower local uniform $K$-monotonicity} of $E$, shortly an $LLUKM$ point, if for any $(x_n)\subset{E}$ with $x_n\prec{x}$ for all $n\in\mathbb{N}$ and  $\norm{x_n}{E}{}\rightarrow\norm{x}{E}{}$, then $\norm{x^*-x_n^{*}}{E}{}\rightarrow{0}$. A symmetric space $E$ is called \textit{upper locally uniformly $K$-monotone}, shortly $E\in(ULUKM)$, (resp. \textit{lower locally uniformly $K$-monotone}, shortly ($E\in(LLUKM)$) whenever any element $x\in{}E$ is a $ULUKM$ point (resp. an $LLUKM$ point). We refer the reader for more information to see \cite{ChDSS,Cies-JAT,CieKolPluSKM,CieLewUKM,hkm-geo-prop}.

Now we recall some notions which have been introduced in \cite{CieLewUKM}, and are in some sense a generalization of uniform monotonicity properties in symmetric spaces. It is worth mentioning that the generalization is obtained by replacing a relation $\leq$ by a weaker relation $\prec$, in definition of uniform monotonicity properties. Let us also notice that the generalization characterizes a completely different geometric structure of symmetric spaces than monotonicity properties. A symmetric space $E$ is said to be \textit{uniformly $K$-monotone}, shortly $E\in(UKM)$, if  for any $(x_n),(y_n)\subset{E}$ such that $x_n\prec{y_n}$ for all $n\in\mathbb{N}$ and $\lim_{n\rightarrow\infty}\norm{x_n}{E}{}=\lim_{n\rightarrow\infty}\norm{y_n}{E}{}<\infty$ we have $\norm{x_n^*-y_n^*}{E}{}\rightarrow{0}$.
A symmetric space $E$ is called \textit{decreasing (resp. increasing) uniformly $K$-monotone}, shortly $E\in(DUKM)$ (resp. shortly $E\in(IUKM)$), if  for any $(x_n),(y_n)\subset{E}$ such that $x_{n+1}\prec{}x_n\prec{y_n}$ for all $n\in\mathbb{N}$ and $\lim_{n\rightarrow\infty}\norm{x_n}{E}{}=\lim_{n\rightarrow\infty}\norm{y_n}{E}{}<\infty$ (resp. $x_n\prec{y_n}\prec{y_{n+1}}$ for every $n\in\mathbb{N}$ and $\lim_{n\rightarrow\infty}\norm{x_n}{E}{}=\lim_{n\rightarrow\infty}\norm{y_n}{E}{}<\infty$), we have $\norm{x_n^*-y_n^*}{E}{}\rightarrow{0}$.

Let us assume that $(P)$ is any global property given in a symmetric space $E$. In case when the similar property holds, but considering only nonnegative and decreasing elements of $E$ in the definition, then we say that $E^d$ the positive cone of all nonnegative and decreasing elements of $E$ has a property $(P)$ or equivalently $E^d\in{(P)}$. Analogously, we define a global property that is only satisfied on $E^{+}$ the positive cone of all nonnegative elements of a Banach function space $E$, i.e. we say $E^+$ has $(P)$ or $E^+\in{(P)}$. It is necessary to mention that a symmetric space $E$ is $(LFkR)^*$ (resp. $E$ is $(CLFkR)^*$) if for each $x\in{S_E}$, $x^*$ is a point of local fully $k$-rotundity (resp. a point of compact local fully $k$-rotundity). Similarly, we say that a Banach function space $E$ is $(LFkR)^+$ (resp. $(CLFkR)^+$) if for any $x\in{S_{E}}$, $\abs{x}{}{}$ is a point of local fully $k$-rotundity (resp. a point of compact local fully $k$-rotundity) in $E$.

Recall that the mapping $\psi:\mathbb{R}\rightarrow[0,\infty]$ is said to be an \textit{Orlicz function} if $\psi$ is nonzero function that is even, convex, continuous and vanishes at zero, $\lim_{\abs{t}{}{}\rightarrow\infty}\psi(t)=\infty$. The function $\psi:\mathbb{R}\rightarrow[0,\infty]$ is called an \textit{$N$-function} (resp. \textit{$N$-function at zero}) if $\psi$ is even, convex, continuous (resp. even, convex, continuous) and
\begin{equation*}
\lim_{t\rightarrow 0}\frac{\psi(t)}{t}=0\quad\textnormal{and}\quad\lim_{t\rightarrow \infty}\frac{\psi(t)}{t}=\infty\qquad\left(\textnormal{resp.}\quad\lim_{t\rightarrow 0}\frac{\psi(t)}{t}=0\right).
\end{equation*}  
We use the parameter
\begin{equation*}
a_\psi=\sup\{t>0:\psi(t)=0\}.
\end{equation*}
We say that an Orlicz function $\psi$ satisfies $\Delta_2$ condition for all $u\in\mathbb{R^+}$, shortly $\psi\in\Delta_2$, if there exists $K>0$ such that for all $u\in\mathbb{R}$ we have $\psi(2u)\leq{K}\psi(u)$. Let us notice that if $\psi\in\Delta_2$, then $a_\psi=0$. For any Orlicz function $\psi$ we define its complementary function $\psi_{_Y}$ on $\mathbb{R}$ in the sense of Young and a convex modular $\rho_\psi$ on $L^0$ by
\begin{equation*}
\psi_{_Y}(u)=\sup_{v>0}\{\abs{u}{}{}v-\psi(v)\}\qquad\textnormal{and}\qquad{\rho_\psi}(x)=\int_{I}\psi(x(t))dt
\end{equation*}
for any $u\in\mathbb{R}$ and for any $x\in{L^0}$, respectively. The Orlicz space $L^\psi$ generated by an Orlicz function $\psi$ is given by 
\begin{equation*}
L^\psi=\left\{x\in L^0:\rho_\psi(\lambda x)<\infty,\textnormal{ for some }\lambda>0 \right\}.
\end{equation*}
It is well known that the Orlicz space $L^\psi$ might be considered as a Banach space equipped with the Luxemburg norm 
\begin{equation*}
\norm{x}{\psi}{}=\inf\left\{\lambda>0:\rho_\psi\left(\frac{x}{\lambda}\right)\leq{1}\right\}
\end{equation*}
or with the equivalent Orlicz norm
\begin{equation*}
\norm{x}{\psi}{o}=\sup\left\{\abs{\int_{I}y(t)x(t)dt}{}{}:\rho_{\psi_{_Y}}(y)\leq{1}\right\}.
\end{equation*}
Recall that the Orlicz space $L^\psi$ is order continuous if and only if an Orlicz function $\psi$ satisfies $\Delta_2$ condition. It is worth mentioning that the Orlicz spaces $L^\psi$ are r.i. Banach function spaces under both the Luxemburg and Orlicz norms (for more details the reader is referred to \cite{BS,CHK,KraRut,KPS}).

For given $0<p<\infty$ and a weight function $w\geq{0}$, we define the Lorentz space $\Lambda_{p,w}$, which is a subspace of $L^0$ such that
\begin{equation*}
\left\Vert x\right\Vert _{\Lambda_{p,w}}=\left( \int_{0}^{\alpha}(x^{\ast }(t))^{p}w(t)dt\right) ^{1/p}<\infty,
\end{equation*}
where $W(t)=\int_{0}^{t}w<\infty$ for any $t\in{I}$ and $W(\infty)=\infty$ in the case when $\alpha=\infty$. It is well known that the spaces $\Lambda_{p,w}$ were introduced by Lorentz in \cite{Loren} and the space $\Lambda_{p,w}$ is a norm space (resp. quasi-norm space) if and only if $w$ is decreasing, see \cite{KamMal} (resp. $W$ holds $\Delta_2$ condition, see \cite{Haaker,KamMal}). It is worth reminding that for any $0<p<\infty$ if $W$ fulfills $\Delta_2$ condition and $W(\infty) = \infty$, then the Lorentz space $\Lambda_{p,w}$ is an order continuous r.i. quasi-Banach function space (see \cite{KamMal}). 

For $0<p<\infty $ and $w\in L^{0}$ a nonnegative weight function we consider the
Lorentz space $\Gamma _{p,w}$, that is a subspace of $L^{0}$ such that 
\begin{equation*}
\Vert x\Vert _{\Gamma _{p,w}}=\norm{x^{**}}{\Lambda_{p,w}}{}=\left( \int_{0}^{\alpha }x^{\ast \ast
	p}(t)w(t)dt\right) ^{1/p}<\infty .
\end{equation*}
Unless we say otherwise, we suppose that $w$ belongs to the class $D_{p}$, i.e. 
\begin{equation*}
W(s):=\int_{0}^{s}w(t)dt<\infty \mathnormal{\ \ \ }\text{\textnormal{and}}%
\mathnormal{\ \ }W_{p}(s):=s^{p}\int_{s}^{\alpha }t^{-p}w(t)dt<\infty
\end{equation*}%
for all $0<s\leq 1 $ if $\alpha =1 $ and for all $0<s<\infty$ otherwise. It is easy to observe that if $w\in{}D_p$, then the Lorentz space $\Gamma _{p,w}$ is nontrivial. Moreover, it is easy to see that $\Gamma _{p,w}\subset \Lambda _{p,w}.$ On the other hand, the following inclusion $\Lambda _{p,w}\subset \Gamma _{p,w}$ holds if and only if $w\in B_{p}$ (see \cite{KMGam}). Let us also recall that $\left( \Gamma _{p,w},\Vert \cdot \Vert_{\Gamma _{p,w}}\right)$ is a r.i. quasi-Banach function space with the Fatou property and were introduced by Calder\'{o}n in \cite{Cal}. It is well known that in the case when $\alpha =\infty $ the Lorentz space $\Gamma _{p,w}$ has order continuous norm if and only if $\int_{0}^{\infty }w\left( t\right) dt=\infty$ (see  \cite{KMGam}). It is also commonly known that by the Lions-Peetre $K$-method (see \cite{KPS}), the space $\Gamma_{p,w}$ is an interpolation space between $L^{1}$ and $L^{\infty }$. For more details about the properties of the spaces $\Lambda_{p,w}$ and $\Gamma _{p,w}$ the reader is referred to \cite{CieKolPan,CieKolPlu,KMGam,KamMal}. 

Let $\mathcal{A}$ be a subset of a Banach space $X$ and let $x\in{X}$. We denote
\begin{equation*}
P_\mathcal{A}(x)=\left\{a\in\mathcal{A}:\norm{x-a}{E}{}=\inf_{b\in\mathcal{A}}\norm{x-b}{E}{} \right\}.
\end{equation*}
The best approximation problem is said to be unique (resp. \textit{proximinal} or \textit{solvable}) if $\Card(P_\mathcal{A}(x))\leq{1}$ (resp. if $P_\mathcal{A}(x)\neq\emptyset$).  
We say that the best approximation problem is \textit{uniquely solvable} if $\Card(P_\mathcal{A}(x))=1$. Given sequence $(x_n)\subset{\mathcal{A}-x}$ is said to be a \textit{minimizing sequence} of $\mathcal{A}-x$ if 
\begin{equation*}
\lim_{n\rightarrow\infty}\norm{x_n}{E}{}=\inf_{a\in\mathcal{A}}\norm{x-a}{E}{}=\dist(x,\mathcal{A}).
\end{equation*}
A convex subset $C$ of a Banach space $X$ is called \textit{approximatively compact} if for any $x\in{X}$ and for any sequence $(x_n)\subset{C}$ such that $\norm{x_n-x}{X}{}\rightarrow{d(x,C)}=\inf_{y\in C}\norm{y-x}{X}{}$ we have $(x_n)$ has a Cauchy subsequence. A Banach space $X$ is said to be \textit{approximatively compact} if any closed and convex subset in $X$ is approximatively compact (see \cite{HuKoLe}). 


\section{$K$-order continuity in symmetric spaces}

In this section we show a relation between $K$-order continuity and the embedding of a symmetric space $E$ in $L^1[0,\infty)$. We start our research with the auxiliary lemma.

\begin{lemma}\label{lem:1:KOC}
Let $E$ be a symmetric space on $I=[0,\infty)$. If $x\in{E}$ is a point of order continuity in $E$ and there exists $y\in{E}\setminus{L^1[0,\infty)}$, then $x$ is a point of $K$-order continuity.
\end{lemma}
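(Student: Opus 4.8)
The plan is to take a sequence $(x_n)\subset E$ with $x_n\prec x$ for all $n$ and $x_n^*\to 0$ a.e., and show $\|x_n\|_E\to 0$. The key tension is that $x$ being a point of order continuity controls sequences that are \emph{dominated} by $|x|$ pointwise, whereas $x_n\prec x$ only gives control of the integrals $x_n^{**}\le x^{**}$. The bridge between the two will be the existence of $y\in E\setminus L^1[0,\infty)$: such a $y$ has $y^*(\infty)=\lim_{t\to\infty}y^*(t)>0$ forced to be... actually $y\notin L^1$ means $\int_0^\infty y^*<\infty$ fails, i.e. $\int_0^\infty y^*=\infty$, which (since $y^*$ is decreasing) is compatible with $y^*(\infty)=0$; the point is that $y^*$ has a "fat tail." I would use $y^*$ (or rather its integral behavior at infinity) to absorb the part of $x_n^*$ that is not pointwise small.

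**Main decomposition.** Fix $\varepsilon>0$. Since $x\in E_a$, one first shows $\|x\chi_{\{|x|\le\delta\}}\|_E\to 0$ as $\delta\to 0^+$ and $\|x\chi_{(t,\infty)}\|_E\to 0$ as $t\to\infty$ — both follow from order continuity of the point $x$ together with the Fatou property. So choose $\delta>0$ and $T>0$ with $\|x^*\chi_{(T,\infty)}\|_E<\varepsilon$ and with the measure of the "large" level set small. Now for each $n$, split $x_n^*=x_n^*\chi_{(0,s_n)}+x_n^*\chi_{(s_n,\infty)}$ at a cutoff $s_n$ chosen so that the tail $x_n^*\chi_{(s_n,\infty)}$ is Hardy–Littlewood–Pólya dominated by $c\,y^*\chi_{(R,\infty)}$ for a controlled constant $c$ and large $R$ — this is possible precisely because $\int_R^\infty y^*=\infty$ while $\int_0^\infty x_n^{*} \le x^{**}(1)<\infty$ is bounded uniformly in $n$ (using $x_n\prec x$), so the tail mass of $x_n^*$ can always be "hidden under" a far-out piece of $y^*$ of prescribed small $E$-norm. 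By $K$-monotonicity of $E$ (which holds since $E$ has the Fatou property), $\|x_n^*\chi_{(s_n,\infty)}\|_E\le c\|y^*\chi_{(R,\infty)}\|_E<\varepsilon$. On the head $(0,s_n)$, since $x_n^*\to 0$ a.e. and $x_n^*\le x_n^{**}\le x^{**}$ with $x^{**}\in E$ (as $x^{**}\prec$ a multiple of... rather, $x^{**}$ itself need not be in $E$, so instead use $x_n^*\le x_n^{**}(0^+)$ pointwise bounded, combined with a.e. convergence and dominated convergence relative to the order-continuous point): here $x_n^*\chi_{(0,s_n)}$ is dominated by an element of $E$ built from $x$ and goes to $0$ a.e., so order continuity of that dominating element kills this piece.

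**The main obstacle.** The delicate step is the head estimate: I need a single element $z\in E_a$ (manufactured from $x$ and $y$) that dominates $x_n^*\chi_{(0,s_n)}$ for all $n$, so that $x_n^*\chi_{(0,s_n)}\to 0$ a.e. plus $x_n^*\chi_{(0,s_n)}\le z$ yields $\|x_n^*\chi_{(0,s_n)}\|_E\to 0$ by order continuity of $z$. The natural candidate is $z=x^*+$ (something handling the region where $x_n^*$ exceeds $x^*$ but still on a bounded interval). Controlling $\sup_n s_n$ is the crux: one must show the cutoffs $s_n$ can be taken bounded, i.e. that the tail mass requiring the $y$-absorption trick lives beyond a fixed point. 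This is where $x_n\prec x$ is used quantitatively — $\int_{s}^{\infty}x_n^*(t)\,dt \le \int_0^\infty x_n^* - \int_0^s x^*\cdot(\text{lower bound})$... more carefully, $x_n^{**}(t)\le x^{**}(t)\to x^*(\infty)$, so $\int_0^\infty(x_n^*-x^*(\infty))\le\int_0^\infty(x^*-x^*(\infty))$ when the latter is finite; if $x^*(\infty)>0$ one first subtracts the constant and handles $x^*(\infty)\chi_I$ separately using $y$. Once uniform boundedness of the relevant cutoffs is secured, assembling the three pieces ($\le 3\varepsilon$ for $n$ large) finishes the proof.
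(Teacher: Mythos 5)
Your plan does not go through: the two quantitative claims that the tail-absorption step rests on are both false in general, and the head estimate that you yourself identify as ``the crux'' is left unresolved. First, $x_n\prec x$ only gives $\int_0^t x_n^*\le\int_0^t x^*$ for each finite $t$, hence $\int_0^\infty x_n^*\le\int_0^\infty x^*$ --- and the right-hand side may be infinite, since nothing in the hypotheses forces $x\in L^1$ (indeed $x$ itself could be the witness $y$). So the tail mass of $x_n^*$ is not uniformly bounded, and it is certainly not bounded by $x^{**}(1)=\int_0^1 x^*$ as you assert; e.g.\ in $E=L^2[0,\infty)$ with $x^*(t)=(1+t)^{-2/3}$ and $x_n^*=x^*(\cdot+n)$ one has $x_n\prec x$, $x_n^*\to0$ a.e., yet $\int_0^\infty x_n^*=\infty$ for every $n$. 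Second, the absorption step needs $\bigl\Vert c\,y^*\chi_{(R,\infty)}\bigr\Vert_E<\varepsilon$ for large $R$, but $y$ is merely some element of $E\setminus L^1$ and need not be a point of order continuity: if $\chi_{[0,\infty)}\in E$ one may take $y=\chi_{[0,\infty)}$, and then by symmetry $\Vert y^*\chi_{(R,\infty)}\Vert_E=\Vert\chi_{[0,\infty)}\Vert_E$ for every $R$, so no far-out piece of $y^*$ has small norm. These are not repairable details; they are the load-bearing steps of your decomposition.

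The hypothesis $y\in E\setminus L^1[0,\infty)$ should instead be converted into information about the fundamental function, which is exactly what the paper does: by Proposition 5.9 in \cite{BS}, $\int_0^t y^*=t\,y^{**}(t)\le\Vert y\Vert_E\,t/\phi(t)$, and since $\int_0^\infty y^*=\infty$ this forces $\lim_{t\to\infty}t/\phi(t)=\infty$, i.e.\ $\phi(t)/t\to0$. That is the quantity that actually controls the tail (a decreasing function bounded by $\lambda$ with head mass $M$ has $E$-norm at most $\lambda\phi(M/\lambda)=M\cdot\phi(M/\lambda)/(M/\lambda)\to0$ as $\lambda\to0$), and the paper then concludes by citing Proposition 3.1 of \cite{CieLewUKM}, which says precisely that a point of order continuity is a point of $K$-order continuity once $\sup_{t>0}t/\phi(t)=\infty$. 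If you want a self-contained proof, extract $\phi(t)/t\to0$ from $y$ first and run your head/tail decomposition against that, not against $y^*$ itself.
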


\begin{proof}
Let us assume that $\phi$ is the fundamental function of $E$. Since $y\in{}E$, by symmetry of $E$ and by Proposition 5.9 \cite{BS} we have for any $t>0$,
\begin{equation*}
\frac{\int_{0}^{t}y^*}{\norm{y}{E}{}}\leq\frac{t}{\phi(t)}.
\end{equation*}
In consequence, since $\int_{0}^{\infty}y^*=\infty$, by Corollary 5.3 \cite{BS} it follows that 
\begin{equation*}
\sup_{t>0}\frac{t}{\phi(t)}=\lim_{t\rightarrow\infty}\frac{t}{\phi(t)}=\infty.
\end{equation*}
Finally, by Proposition 3.1 \cite{CieLewUKM} we conclude $x$ is a point of $K$-order continuity.
\end{proof}

\begin{remark}\label{rem:FP&phi}
	Assuming that $\phi$ is the fundamental function of a symmetric space $E$ with the Fatou property, it is easy to see that $\phi(\infty)=\lim_{t\rightarrow\infty}\phi(t)=\infty$ if and only if $x^*(\infty)=0$ for all $x\in{E}$. 
\end{remark}

\begin{theorem}\label{thm:KOC}
Let $E$ be a symmetric space with the fundamental function $\phi$ on $I=[0,\infty)$. Then the following conditions are equivalent.
\begin{itemize}
	\item[$(i)$] $E$ is order continuous and is not embedded in ${L^1[0,\infty)}$.
	\item[$(ii)$] $E$ is order continuous and $E'\hookrightarrow\{f:f^*(\infty)=0\}$.
	\item[$(iii)$] $E$ is $K$-order continuous and $\phi(\infty)=\infty$.
\end{itemize}
\end{theorem}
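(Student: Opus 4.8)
The plan is to prove the equivalences by establishing the implications $(i)\Rightarrow(iii)$, $(iii)\Rightarrow(ii)$ and $(ii)\Rightarrow(i)$, exploiting Lemma~\ref{lem:1:KOC} and Remark~\ref{rem:FP&phi} together with standard duality facts for symmetric spaces (principally the description of the associate space of $L^1+L^\infty$ and the fundamental function identity $\phi_E(t)\phi_{E'}(t)=t$).

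First I would prove $(i)\Rightarrow(iii)$. Assume $E$ is order continuous and not embedded in $L^1[0,\infty)$; the latter means precisely that there exists $y\in E\setminus L^1[0,\infty)$. For each $x\in E$, $x$ is then a point of order continuity (since $E\in(OC)$), so Lemma~\ref{lem:1:KOC} yields that every $x\in E$ is a point of $K$-order continuity, i.e. $E\in(KOC)$. For $\phi(\infty)=\infty$: the computation in the proof of Lemma~\ref{lem:1:KOC} already shows $\sup_{t>0} t/\phi(t)=\lim_{t\to\infty}t/\phi(t)=\infty$; combined with the fact that $\phi$ is quasiconcave (hence $\phi$ increasing), one checks this forces $\phi(t)\to\infty$. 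Indeed if $\phi$ were bounded, say $\phi(t)\le M$, the obstruction is only that $t/\phi(t)\to\infty$ is automatic and gives nothing — so instead I note directly that $E\not\hookrightarrow L^1$ together with the Fatou property means there is $y\in E$ with $y^*\notin L^1$, so $y^*(\infty)$ may be positive only if $\int^\infty y^*=\infty$ anyway; the cleaner route is to invoke Remark~\ref{rem:FP&phi}: if $\phi(\infty)<\infty$ then there is $x\in E$ with $x^*(\infty)>0$, but such an $x$ cannot be a point of $K$-order continuity (taking $x_n=x^*$ on $[n,\infty)$-type tails, $x_n\prec x$ with $x_n^*\to x^*(\infty)>0$ pointwise... — actually one must be careful, so I would instead argue: $x^*(\infty)=c>0$ means $x^*\ge c$ everywhere, so $x^*\chi_{[0,t]}\prec x$ for all $t$ with $\|x^*\chi_{[0,t]}\|_E\to$ something, which contradicts nothing directly). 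The robust argument: $\phi(\infty)<\infty$ combined with the Fatou property gives $\chi_{[0,\infty)}\in E$, hence $1\in E$; then for $x_n=\chi_{[n,\infty)}$ we have $x_n^*=\chi_{[0,\infty)}=x_n$ so $x_n^*\not\to 0$, yet $x_n\prec \chi_{[0,\infty)}$ and $x_n\to 0$ a.e. — wait $x_n^*$ does not tend to $0$, which is exactly the failure of $(KOC)$. So $E\in(KOC)\Rightarrow\phi(\infty)=\infty$, finishing $(i)\Rightarrow(iii)$.

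Next, $(iii)\Rightarrow(ii)$: if $E\in(KOC)$ then in particular $E\in(OC)$ (the relation $x_n\le x$, $x_n\to 0$ a.e. implies $x_n\prec$ a rearrangement of $x$ via majorization, so $(KOC)\Rightarrow(OC)$ — this should be recorded or cited from \cite{Cies-SKM&KOC,Cies-JAT}). For the embedding $E'\hookrightarrow\{f:f^*(\infty)=0\}$: since $\phi_E(\infty)=\infty$, the identity $\phi_E(t)\phi_{E'}(t)=t$ gives $\phi_{E'}(t)=t/\phi_E(t)$, and one wants $\phi_{E'}(\infty)<\infty$ to conclude (via Remark~\ref{rem:FP&phi} applied to $E'$, which also has the Fatou property) that every $f\in E'$ has $f^*(\infty)=0$. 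But $\phi_{E'}(\infty)=\lim t/\phi_E(t)$ which by the Lemma's computation is $\sup_t t/\phi_E(t)$; I need this to be finite, equivalently $E\not\hookrightarrow L^1$ — hmm, this is circular. The correct logic is that $(iii)\Rightarrow(i)$ should perhaps be proved directly: $\phi(\infty)=\infty$ means $\chi_{[0,\infty)}\notin E$ (by Fatou/Remark), which is a weak form of non-embedding; but genuine non-embedding in $L^1$ is stronger. So I would actually structure the proof as $(i)\Leftrightarrow(iii)$ using Lemma~\ref{lem:1:KOC} and its converse (if $E\in(KOC)$ then since a point $x$ with $x^*\in L^1$ would... ), and then $(i)\Leftrightarrow(ii)$ via the duality characterization: $E\hookrightarrow L^1[0,\infty)$ iff $L^\infty[0,\infty)\hookrightarrow E'$ iff $\chi_{[0,\infty)}$-type elements, i.e. bounded functions not vanishing at infinity, lie in $E'$, iff $E'\not\hookrightarrow\{f:f^*(\infty)=0\}$, the last by Remark~\ref{rem:FP&phi} for $E'$.

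The main obstacle I expect is pinning down the duality step $(ii)\Leftrightarrow(i)$ rigorously: one must show $E\hookrightarrow L^1[0,\infty)$ is equivalent to $(L^1)'=L^\infty[0,\infty)$ embedding continuously into $E'$, and then translate "$L^\infty\hookrightarrow E'$" into "$E'$ contains a function that does not vanish at infinity" — the forward direction is clear (the constant function $1$ is such), but the converse requires that if $E'$ contains some $f$ with $f^*(\infty)=c>0$ then $f\ge c$ on a set of infinite measure, whose rearrangement dominates $c\chi_{[0,\infty)}$, so $\chi_{[0,\infty)}\in E'$ by the ideal property, hence by symmetry all of $L^\infty\hookrightarrow E'$ (using closed graph / $\phi_{E'}(\infty)<\infty$). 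Handling the precise continuity of these embeddings (as opposed to mere set inclusion) via the closed graph theorem, and making sure Remark~\ref{rem:FP&phi} applies to the associate space $E'$ (which indeed has the Fatou property), is the delicate bookkeeping. The order-continuity bookkeeping — that $(iii)\Rightarrow E\in(OC)$ and that the $K$-order-continuity condition on $E$ is insensitive to whether we phrase non-embedding as "$\exists y\in E\setminus L^1$" — is routine given Lemma~\ref{lem:1:KOC} and Proposition 3.1 of \cite{CieLewUKM}.
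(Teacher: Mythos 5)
Your overall architecture (the cycle $(i)\Rightarrow(iii)\Rightarrow(ii)\Rightarrow(i)$, driven by Lemma \ref{lem:1:KOC}, Remark \ref{rem:FP&phi}, and the duality $\phi_E(t)\phi_{E'}(t)=t$) is the same as the paper's, but the attempt has one genuine gap and two local errors that derail it. The gap: you never establish that $(iii)$ forces $E\not\hookrightarrow L^1[0,\infty)$ (equivalently, $\phi_{E'}(\infty)=\infty$, equivalently $(ii)$). You correctly observe that $\phi(\infty)=\infty$ is strictly weaker than non-embedding in $L^1$, but your sentence ``if $E\in(KOC)$ then since a point $x$ with $x^*\in L^1$ would\dots'' trails off, and this is precisely the load-bearing step — it is what the paper imports as Lemma 3 of \cite{Cies-SKM&KOC}. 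It can be closed directly: if $\Vert x\Vert_{L^1}\le C\Vert x\Vert_E$ for all $x\in E$, set $x_n=\frac1n\chi_{[0,n)}$; then $x_n^{**}(t)=\min(1/n,1/t)\le x_1^{**}(t)$, so $x_n\prec x_1$, $x_n^*=x_n\to0$ a.e., yet $\Vert x_n\Vert_E\ge\Vert x_n\Vert_{L^1}/C=1/C$, contradicting $K$-order continuity at $x_1$. Without this (or a citation replacing it), the three conditions are not shown equivalent.

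The two local errors. First, in $(i)\Rightarrow(iii)$ you claim the sequence $x_n=\chi_{[n,\infty)}$ exhibits ``exactly the failure of $(KOC)$'': it does not, because $x_n^*=\chi_{[0,\infty)}\not\to0$ a.e., so the hypothesis of the $K$-order-continuity definition is never met. What the sequence does witness is failure of \emph{order} continuity at $\chi_{[0,\infty)}$ (since $x_n\le\chi_{[0,\infty)}$, $x_n\to0$ a.e., $\Vert x_n\Vert_E=\phi(\infty)>0$), and since $(i)$ assumes $E\in(OC)$ that is enough to conclude $\phi(\infty)=\infty$ — but as written the reasoning is wrong. Second, in $(iii)\Rightarrow(ii)$ you want $\phi_{E'}(\infty)<\infty$; by Remark \ref{rem:FP&phi} applied to $E'$ the conclusion $E'\hookrightarrow\{f:f^*(\infty)=0\}$ requires $\phi_{E'}(\infty)=\infty$, i.e. $\lim_{t\to\infty}t/\phi_E(t)=\infty$. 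This sign reversal is what makes you believe the argument is circular and abandon it; in fact $\lim_{t\to\infty}t/\phi_E(t)=\infty$ is exactly the non-embedding statement from the gap above, so once that is supplied the route is not circular at all and the paper's proof proceeds exactly this way (via Theorem 5.2 in \cite{BS} and Lemma 3 of \cite{Cies-SKM&KOC}). Your duality sketch for $(ii)\Leftrightarrow(i)$ (non-embedding in $L^1$ iff $\chi_{[0,\infty)}\notin E'$ iff $\phi_{E'}(\infty)=\infty$, using the Fatou property of $E'$) is sound in outline and matches what the paper gets from Proposition 4.2 in \cite{DSS} and Corollary 1 in \cite{Cies-SKM&KOC}.
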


\begin{proof}
Assume that $\psi$ is the fundamental function of the associate space $E'$ of a symmetric space $E$. $(i)\Rightarrow(iii)$. Immediately, by Lemma 2.5 in \cite{CieKolPan} and Lemma \ref{lem:1:KOC} and Remark \ref{rem:FP&phi} we get $E$ is $K$-order continuous and $\phi(\infty)=\infty$.\\
$(iii)\Rightarrow(ii)$. By Lemma 3 in \cite{Cies-SKM&KOC} and by Theorem 5.2 in \cite{BS} we have
\begin{equation*}
\lim_{t\rightarrow\infty}\psi(t)=\lim_{t\rightarrow\infty}\frac{t}{\phi(t)}=\infty.
\end{equation*}
Hence, by Corollary 2 in \cite{Cies-SKM&KOC} and by Remark \ref{rem:FP&phi} it follows that $E$ is order continuous and $E'\hookrightarrow\{f:f^*(\infty)=0\}$.\\
$(ii)\Rightarrow(i)$. By Proposition 4.2 in \cite{DSS} under the assumption that $E'\hookrightarrow\{f:f^*(\infty)=0\}$ and by Corollary 1 in \cite{Cies-SKM&KOC} we have $E$ is $K$-order continuous. Next, by Lemma 3 in \cite{Cies-SKM&KOC} we conclude that $E$ is not embedded in ${L^1[0,\infty)}$.
\end{proof}

Now we present the complete characterization of decreasing uniform $K$-monotonicity in a symmetric space $E$, under the assumption that $\phi(\infty)=\infty$, where $\phi$ is the fundamental function of $E$.

\begin{theorem}\label{thm:DUKM<=>KOC&ULUKM}
Let $E$ be a symmetric space with the fundamental function $\phi$ such that $\phi(\infty)=\infty$ in case when $I=[0,\infty)$. The following conditions are equivalent.
\begin{itemize}
	\item[$(i)$] $E$ is decreasing uniformly $K$-monotone.
	\item[$(ii)$] $E$ is $K$-order continuous and upper locally uniformly $K$-monotone.
\end{itemize}
\end{theorem}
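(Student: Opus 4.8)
The plan is to prove the two implications separately. For $(i)\Rightarrow(ii)$ the upper local uniform $K$-monotonicity is immediate and the $K$-order continuity is where I expect the work; for $(ii)\Rightarrow(i)$ I can give a direct argument built on a pointwise limit of maximal functions. Throughout, recall that $E$ is $K$-monotone because it has the Fatou property.

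\textbf{$(i)\Rightarrow(ii)$.} To see $E$ is upper locally uniformly $K$-monotone, fix $x\in E$ and $(x_n)\subset E$ with $x\prec x_n$ for every $n$ and $\|x_n\|_E\to\|x\|_E$; apply $(DUKM)$ to the constant first sequence $\overline x_n:=x$ (so $\overline x_{n+1}\prec\overline x_n\prec x_n$) and to $y_n:=x_n$: since $\lim_n\|\overline x_n\|_E=\|x\|_E=\lim_n\|x_n\|_E<\infty$, we get $\|x^*-x_n^*\|_E\to 0$, i.e.\ $x$ is a $ULUKM$ point. For $K$-order continuity I would argue by contraposition through Theorem~\ref{thm:KOC}: when $I=[0,\infty)$ it is enough (the standing hypothesis $\phi(\infty)=\infty$ being in force) to show that $E\in(DUKM)$ is order continuous and \emph{not} embedded in $L^1[0,\infty)$, and when $I=[0,1]$ it is enough to show $E\in(DUKM)$ is order continuous, since for $\alpha=1$ order continuity implies $K$-order continuity (split $z_n^*=z_n^*\chi_{(0,\delta)}+z_n^*\chi_{[\delta,1)}$: the first summand satisfies $z_n^*\chi_{(0,\delta)}\prec x^*\chi_{(0,\delta)}$, so its norm is $\le\|x\chi_{(0,\delta)}\|_E\to 0$ as $\delta\to 0$ because $x\in E_a$, while for fixed $\delta$, $\|z_n^*\chi_{[\delta,1)}\|_E\le z_n^{**}(\delta)\,\phi(1)\to 0$).

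\textbf{Non-embeddability and the obstacle.} If $E\hookrightarrow L^1[0,\infty)$ then $\gamma:=\lim_{t\to\infty}\phi(t)/t>0$; take $a_n:=\tfrac1n\chi_{(0,n)}$ and $b_n:=\tfrac1{k_n}\chi_{(0,k_n)}$ with $k_n\to\infty$ and $k_n/n\to 0$ (e.g.\ $k_n=\lfloor\sqrt n\rfloor$). One checks $a_{n+1}\prec a_n$ (indeed $a_n^{**}(t)=\min\{1/n,1/t\}$) and $a_n\prec b_n$ (because $k_n\le n$), while $\|a_n\|_E=\phi(n)/n\to\gamma$ and $\|b_n\|_E=\phi(k_n)/k_n\to\gamma$, yet
\[
\|a_n^*-b_n^*\|_E\ \ge\ \bigl\|\bigl(\tfrac1{k_n}-\tfrac1n\bigr)\chi_{(0,k_n)}\bigr\|_E=\tfrac{\phi(k_n)}{k_n}\Bigl(1-\tfrac{k_n}{n}\Bigr)\longrightarrow\gamma>0,
\]
contradicting $(DUKM)$; hence $E\in(DUKM)$ is not embedded in $L^1[0,\infty)$. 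The order-continuity step is the main difficulty. If $E\notin(OC)$, pick $x\in E\setminus E_a$ and $B_n\downarrow\emptyset$ with $\|x\chi_{B_n}\|_E\ge\varepsilon$; setting $w_n:=x\chi_{B_n}$ one has $w_{n+1}^*\le w_n^*\le x^*$, hence $w_{n+1}\prec w_n\prec x$, and $d_{w_n}(\lambda)\to 0$ forces $w_n^*\to 0$ a.e., with $\|w_n\|_E\ge\varepsilon$. One must then produce a second sequence $b_n$ with $w_n\prec b_n$, $\lim_n\|b_n\|_E=\lim_n\|w_n\|_E$, but $\|w_n^*-b_n^*\|_E\not\to 0$. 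The natural candidates are the small dilations $b_n(t):=w_n^*\bigl(t/(1+\theta_n)\bigr)$ with $\theta_n\to 0$, for which $b_n^{**}(t)=w_n^{**}\bigl(t/(1+\theta_n)\bigr)\ge w_n^{**}(t)$ and $\|w_n\|_E\le\|b_n\|_E\le(1+\theta_n)\|w_n\|_E$ (so the norm limits still agree); the delicate point — the main obstacle of this direction — is to calibrate $(\theta_n)$ so that $\|w_n^*-b_n^*\|_E$ stays bounded away from $0$, which must exploit that the $w_n$ fail to be points of order continuity (alternatively one invokes that $E\in(DUKM)$ is order continuous, cf.\ \cite{CieLewUKM}).

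\textbf{$(ii)\Rightarrow(i)$.} Let $(x_n),(y_n)\subset E$ satisfy $x_{n+1}\prec x_n\prec y_n$ and $\lim_n\|x_n\|_E=\lim_n\|y_n\|_E=L<\infty$; passing to decreasing rearrangements we may assume $x_n=x_n^*\ge 0$ and $y_n=y_n^*\ge 0$. Put $F_n(t):=\int_0^t x_n=t\,x_n^{**}(t)$; each $F_n$ is concave, increasing, with $F_n(0^+)=0$, and $x_{n+1}\prec x_n$ gives $F_{n+1}\le F_n$, so $F_n\downarrow F$ pointwise with $F$ concave, $F(0^+)=0$. Let $h:=F'$ (a.e.\ defined, decreasing, nonnegative). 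By convergence of the derivatives of a pointwise-decreasing sequence of concave functions, $x_n=F_n'\to h$ a.e., and $F(t)=\int_0^t h$ gives $h^{**}(t)=F(t)/t=\inf_n x_n^{**}(t)$; hence $h^{**}\le x_n^{**}\le y_n^{**}$ for all $n$, i.e.\ $h\prec x_n$ and $h\prec y_n$, and by $K$-monotonicity $h\in E$ with $\|h\|_E\le L$. Since $x_n,h$ are nonnegative and decreasing, $(x_n+h)^{**}=x_n^{**}+h^{**}\le 2x_n^{**}\le 2x_1^{**}$, so $|x_n-h|\le x_n+h$ yields $|x_n-h|^{**}\le 2x_1^{**}$, i.e.\ $|x_n-h|\prec 2x_1\in E$; moreover $x_n\to h$ a.e., and combining this with $0\le x_n,h\le x_1^{**}$ and $x_1^{**}(t)\to x_1^*(\infty)=0$ as $t\to\infty$ (Remark~\ref{rem:FP&phi}, in the case $\alpha=\infty$; for $\alpha=1$ there is no tail) one deduces $|x_n-h|^*\to 0$ a.e. Applying $K$-order continuity to $(|x_n-h|)$ gives $\|x_n^*-h\|_E\to 0$, whence also $\|h\|_E=L$. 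Finally $h\prec y_n$ for every $n$ and $\|y_n\|_E\to L=\|h\|_E$, so the $ULUKM$ property at the point $h$ gives $\|h-y_n^*\|_E\to 0$; therefore $\|x_n^*-y_n^*\|_E\le\|x_n^*-h\|_E+\|h-y_n^*\|_E\to 0$, which is $(i)$. The two steps I expect to need the most care are precisely the companion-sequence construction yielding $E\in(DUKM)\Rightarrow E\in(OC)$, and, in $(ii)\Rightarrow(i)$, the a.e.\ convergence $x_n\to h$ via the derivatives of the $F_n$ together with the passage from local convergence in measure of $|x_n-h|$ to $|x_n-h|^*\to 0$ a.e., the latter being where the assumption $\phi(\infty)=\infty$ on $[0,\infty)$ is genuinely used.
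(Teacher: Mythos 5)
Your proposal is correct in substance, but it is considerably more self-contained than the paper's proof, which for both directions leans heavily on \cite{CieLewUKM}. For $(ii)\Rightarrow(i)$ the paper simply invokes Theorem 4.13 of \cite{CieLewUKM} together with Remark \ref{rem:FP&phi}, whereas you give a direct argument: the concave functions $F_n(t)=t\,x_n^{**}(t)$ decrease to a concave limit $F$, the derivative $h=F'$ is the natural candidate limit with $h\prec x_n\prec y_n$, and the two hypotheses of $(ii)$ are used exactly once each ($K$-order continuity at $2x_1$ to get $\|x_n^*-h\|_E\to0$, then $ULUKM$ at $h$ to get $\|y_n^*-h\|_E\to0$). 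This checks out: $x_n+h$ is decreasing so $(x_n+h)^{**}=x_n^{**}+h^{**}\le 2x_1^{**}$, and the passage from $x_n\to h$ a.e.\ to $|x_n-h|^*\to0$ a.e.\ works because $|x_n-h|\le 2x_1^{**}$ and $x_1^{**}(\infty)=x_1^*(\infty)=0$ under $\phi(\infty)=\infty$ (Remark \ref{rem:FP&phi}), so $\{|x_n-h|>\lambda\}$ sits inside a set of finite measure and convergence a.e.\ upgrades to global convergence in measure. That is a genuine alternative to the paper's route and makes the role of each hypothesis transparent. For $(i)\Rightarrow(ii)$ your reduction is the same as the paper's: $ULUKM$ comes from testing $(DUKM)$ against a constant first sequence (the paper cites Remark 4.1 of \cite{CieLewUKM} for this), non-embeddability in $L^1[0,\infty)$ is refuted by essentially the same pair of normalized characteristic functions the paper uses (it takes $\frac{1}{2n}\chi_{[0,2n)}$ and $\frac1n\chi_{[0,n)}$; your $\frac1n\chi_{(0,n)}$ and $\frac{1}{k_n}\chi_{(0,k_n)}$ with $k_n=\lfloor\sqrt n\rfloor$ works equally well), and then Theorem \ref{thm:KOC} (resp.\ your direct splitting argument on $[0,1)$, where the paper cites Corollary 2 of \cite{Cies-SKM&KOC}) converts order continuity plus non-embeddability into $K$-order continuity. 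The one step you do not complete yourself is $(DUKM)\Rightarrow(OC)$: your dilation construction $b_n(t)=w_n^*(t/(1+\theta_n))$ is left uncalibrated, and as written it is not clear it can be made to work, since $\|w_n^*-b_n^*\|_E$ could tend to $0$ for every admissible choice of $\theta_n$ without further input. However, you explicitly fall back on the order continuity of $(DUKM)$ spaces from \cite{CieLewUKM}, which is precisely the paper's own move (Proposition 4.11 there), so this is a matter of presentation rather than a gap in the logic.
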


\begin{proof}
$(ii)\Rightarrow(i)$. Immediately, in view of assumption that $\phi(\infty)=\infty$ in case when $I=[0,\infty)$, by Theorem 4.13 in \cite{CieLewUKM} and by Remark \ref{rem:FP&phi} we get the wanted implication.\\	
$(i)\Rightarrow(ii)$. First, by Remark 4.1 in \cite{CieLewUKM} it follows that $E$ is upper locally uniformly $K$-monotone. Next, by Corollary 2 in \cite{Cies-SKM&KOC}, in case when $I=[0,1)$, and by Theorem \ref{thm:KOC}, in case when $I=[0,\infty)$, in view of Proposition 4.11 in \cite{CieLewUKM} it is enough to prove in case when $I=[0,\infty)$ that $E$ is not embedded in $L^1[0,\infty)$. Assume for a contrary that $E\overset{C}{\hookrightarrow}{L^1[0,\infty)}$. Then, making analogous observation as in the proof of Lemma 3 in \cite{Cies-SKM&KOC} we have
\begin{equation*}
\lim_{t\rightarrow\infty}\frac{\phi(t)}{t}=d>0.
\end{equation*}
Define for any $n\in\mathbb{N},$
\begin{equation*}
x_n=\frac{1}{2n}\chi_{[0,2n)}\quad\textnormal{and}\quad{y_n}=\frac{1}{n}\chi_{[0,n)}.
\end{equation*}
Clearly, $x_{n+1}\prec{x_n}\prec{}y_n$ for all $n\in\mathbb{N}$ and
\begin{equation*}
\lim_{n\rightarrow\infty}\norm{x_n}{E}{}=\lim_{n\rightarrow\infty}\norm{y_n}{E}{}=d.
\end{equation*} 
Hence, by assumption that $E$ is decreasing uniformly $K$-monotone we obtain 
\begin{equation*}
\norm{y_n^*-x_n^*}{E}{}\rightarrow{0}.
\end{equation*}
On the other hand, by symmetry of $E$ we observe that
\begin{equation*}
\norm{y_n^*-x_n^*}{E}{}=\norm{\frac{1}{2n}\chi_{[0,2n)}}{E}{}=\frac{\phi(2n)}{2n}\rightarrow{d},
\end{equation*}
which implies a contradiction and ends the proof.
\end{proof}

We discuss a complete criteria for $K$-order continuity in the Orlicz space $L^\psi$. 

\begin{theorem}\label{prop:Orlicz:KOC}
Let $\psi$ be an Orlicz function and $\Phi_{L^\psi}$ be the fundamental function of the Orlicz space $L^\psi$. Then, the following assertions are satisfied.
\begin{itemize}
	\item[$(i)$] In case when $I=[0,\infty)$, $a_\psi=0$ if and only if $\Phi_{L^\psi}(\infty)=\infty$.
	\item[$(ii)$] The Orlicz space $L^{\psi}$ is $K$-order continuous and if $I=[0,\infty)$, $\Phi_{L^\psi}(\infty)=\infty$ if and only if $\psi$ satisfies $\Delta_2$ condition and if $I=[0,\infty)$, then $\psi$ is the $N$-function at zero.
\end{itemize}
\end{theorem}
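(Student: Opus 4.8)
For $(i)$ the plan is a direct computation of the fundamental function. Since $\rho_\psi(\chi_{(0,t)}/\lambda)=t\,\psi(1/\lambda)$, the definition of the Luxemburg norm gives
\begin{equation*}
\Phi_{L^\psi}(t)=\Vert\chi_{(0,t)}\Vert_\psi=\frac{1}{\psi^{-1}(1/t)},\qquad t\in I,
\end{equation*}
where $\psi^{-1}(s)=\sup\{u>0:\psi(u)\le s\}$ is the generalized inverse of $\psi$, and $0<\psi^{-1}(1/t)<\infty$ because $\psi$ is continuous with $\psi(0)=0$ and $\psi(u)\to\infty$. As $\psi$ vanishes exactly on $[0,a_\psi]$ and is continuous, one checks $\lim_{s\to0^+}\psi^{-1}(s)=a_\psi$, so in the case $I=[0,\infty)$
\begin{equation*}
\Phi_{L^\psi}(\infty)=\lim_{t\to\infty}\frac{1}{\psi^{-1}(1/t)}=\frac{1}{a_\psi}
\end{equation*}
with the convention $1/0=\infty$, which is precisely $(i)$.

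For $(ii)$ I would reduce to Theorem~\ref{thm:KOC} and the facts recalled in Section~2 ($L^\psi\in(OC)\Leftrightarrow\psi\in\Delta_2$, and $\psi\in\Delta_2\Rightarrow a_\psi=0$), the key auxiliary ingredient being the equivalence, valid on $I=[0,\infty)$,
\begin{equation*}
L^\psi\hookrightarrow L^1[0,\infty)\quad\Longleftrightarrow\quad\psi\ \text{is not an }N\text{-function at zero}.
\end{equation*}
For the nontrivial implication, suppose $\psi$ is not an $N$-function at zero. Since $u\mapsto\psi(u)/u$ is nondecreasing (convexity of $\psi$ and $\psi(0)=0$), the number $c:=\lim_{u\to0^+}\psi(u)/u=\inf_{u>0}\psi(u)/u$ is strictly positive, so $\psi(u)\ge cu$ for all $u\ge0$; hence $\Vert x\Vert_\psi\le1$ implies $c\,\Vert x\Vert_{L^1}\le\int_I\psi(|x(t)|)\,dt=\rho_\psi(x)\le1$, and homogeneity yields $\Vert x\Vert_{L^1}\le c^{-1}\Vert x\Vert_\psi$ for every $x\in L^\psi$, i.e. the embedding holds. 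Conversely, suppose $\psi$ is an $N$-function at zero. If $a_\psi=0$, the functions $x_n=\frac1n\chi_{[0,1/\psi(1/n))}$ belong to $L^1\cap L^\infty\subset L^\psi$ and satisfy $\rho_\psi(x_n)=1$, so $\Vert x_n\Vert_\psi\le1$, while $\Vert x_n\Vert_{L^1}=(n\,\psi(1/n))^{-1}\to\infty$ because $n\,\psi(1/n)=\psi(1/n)/(1/n)\to0$; and if $a_\psi>0$ then $\chi_{[0,\infty)}\in L^\psi\setminus L^1[0,\infty)$ since $\rho_\psi(\chi_{[0,\infty)}/\lambda)=0$ for $\lambda>1/a_\psi$. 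In either case $L^\psi$ is not embedded in $L^1[0,\infty)$.

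Granting this, part $(ii)$ follows quickly. When $I=[0,1)$ the assertion reads $L^\psi\in(KOC)\Leftrightarrow\psi\in\Delta_2$, which combines the fact that on a finite interval $K$-order continuity of a symmetric space coincides with order continuity (contained in Corollary~2 of \cite{Cies-SKM&KOC}) with the recalled equivalence $L^\psi\in(OC)\Leftrightarrow\psi\in\Delta_2$. When $I=[0,\infty)$: if $\psi\in\Delta_2$ and $\psi$ is an $N$-function at zero, then $L^\psi$ is order continuous and, by the auxiliary equivalence, not embedded in $L^1[0,\infty)$, so Theorem~\ref{thm:KOC}, $(i)\Rightarrow(iii)$, gives that $L^\psi$ is $K$-order continuous and $\Phi_{L^\psi}(\infty)=\infty$; conversely, if $L^\psi\in(KOC)$ and $\Phi_{L^\psi}(\infty)=\infty$, then Theorem~\ref{thm:KOC}, $(iii)\Rightarrow(i)$, yields that $L^\psi$ is order continuous (hence $\psi\in\Delta_2$) and not embedded in $L^1[0,\infty)$ (hence $\psi$ is an $N$-function at zero by the auxiliary equivalence). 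I expect the main obstacle to be the displayed embedding equivalence — matching the behaviour of $\psi$ near the origin (equivalently, of $\Phi_{L^\psi}(t)/t$ as $t\to\infty$) with the solvability of a uniform estimate $\Vert x\Vert_{L^1}\le C\Vert x\Vert_\psi$; once it is in place, the remainder is bookkeeping with Theorem~\ref{thm:KOC} and the $\Delta_2$ facts from Section~2.
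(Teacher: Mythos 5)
Your argument is correct and, for part $(ii)$, follows the same route as the paper: restrict to $I=[0,\infty)$ via Corollary 2 of \cite{Cies-SKM&KOC} and the $\Delta_2$ characterization of order continuity, characterize the failure of the embedding $L^\psi\hookrightarrow L^1[0,\infty)$ by $\inf_{t>0}\psi(t)/t=0$ (i.e. $\psi$ an $N$-function at zero), and feed this into Theorem \ref{thm:KOC}. The difference is that the paper simply quotes Theorem 8.1 of \cite{KraRut} for the embedding criterion, whereas you prove it from scratch (the estimate $\psi(u)\ge cu$ in one direction; the normalized indicators $x_n=\frac1n\chi_{[0,1/\psi(1/n))}$ together with the separate case $a_\psi>0$ in the other) — your version is self-contained but otherwise equivalent. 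For part $(i)$ you genuinely diverge: the paper reduces, via Remark \ref{rem:FP&phi}, to showing that $a_\psi=0$ if and only if $x^*(\infty)=0$ for every $x\in L^\psi$, and argues with the modular of $x^*$; you instead compute $\Phi_{L^\psi}(t)=1/\psi^{-1}(1/t)$ explicitly and read off $\Phi_{L^\psi}(\infty)=1/a_\psi$. Both are sound; the explicit formula is more informative (it yields the whole fundamental function, not just its limit at infinity), while the paper's route sidesteps the generalized inverse entirely — a point worth a remark in your write-up, since $\psi$ may take the value $+\infty$, although this is harmless for the limit $t\to\infty$ because convexity forces $\psi$ to be finite and continuous on a neighbourhood of $a_\psi$, which is all that computation uses.
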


\begin{proof}
$(i)$. Notice that, by Remark \ref{rem:FP&phi}, it is enough to prove that in case when $I=[0,\infty)$, $a_\psi=0$ if and only if $x^*(\infty)=0$ for any $x\in{L^\psi}$. First, assume that there is $x\in{L^\psi}$ with $x^*(\infty)>0$. Then, there exists $\lambda>0$ such that 
\begin{equation*}
\int_{0}^{\infty}\psi(x^*(\infty)\lambda)\leq\int_{0}^{\infty}\psi(x^*\lambda)=I_\psi(x^*\lambda)<\infty.
\end{equation*}
In consequence, since $x^*(\infty)\lambda>0$, we have $\psi(x^*(\infty)\lambda)=0$, which concludes $a_\psi>0$. Conversely, suppose that $a_\psi>0$. Then, letting $a\in(0,a_\psi)$ and $x=a\chi_{(0,\infty)}$ we get $x^*(\infty)>0$ and $x\in{L^\psi}$, which completes the proof.\\
$(ii)$. Immediately, by Corollary 2 in \cite{Cies-SKM&KOC}  and Theorem 10.3 in \cite{KraRut} we may restrict ourselves to the case when $I=[0,\infty)$. First, by Theorem 8.1 in \cite{KraRut} we get the Orlicz space $L^\psi$ is not embedded in $L^1$ if and only if $$\sup_{t>0}\left\{\frac{t}{\psi(t)}\right\}=\infty.$$ Next, by monotonicity of the mapping ${\psi(t)}/{t}$ we obtain the equivalent condition
$$\lim_{t\rightarrow{0^+}}\left\{\frac{\psi(t)}{t}\right\}=\inf_{t>0}\left\{\frac{\psi(t)}{t}\right\}=0,$$
which means that $\psi$ is the $N$-function at zero. Finally, according to Theorem 10.3 in \cite{KraRut} and Theorem \ref{thm:KOC} we complete the proof.
\end{proof}

\section{Local fully $k$-rotundity}

In this section we discuss local fully $k$-rotundity in symmetric spaces. We start our investigation with the well known Theorem 1 in \cite{FaGl}.

\begin{theorem}\label{thm:remark:1}
Let $X$ be a Banach space. The following condition are equivalent.
\begin{itemize}
	\item[$(i)$] $X$ is fully $k$-rotund (resp. compactly fully $k$-rotund).
	\item[$(ii)$] If a sequence $(x_n)\subset{X}$ and $\norm{x_n}{X}{}\rightarrow{1}$,  $\norm{\sum_{i=1}^{k}x_{n,i}}{X}{}\rightarrow{k}$ for any its subsequences $(x_{n,1}),(x_{n,2}),\cdots,(x_{n,k})$, then $(x_n)$ is a Cauchy sequence (resp. $(x_n)$ forms a relatively compact set). 
\end{itemize}
\end{theorem}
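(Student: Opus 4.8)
The plan is to prove the equivalence $(i)\Leftrightarrow(ii)$ by showing that the only real content is removing the normalization $\|x_n\|_X = 1$ from the definition in favor of the weaker $\|x_n\|_X \to 1$; the implication $(i)\Rightarrow(ii)$ is the substantive half, and $(ii)\Rightarrow(i)$ is essentially immediate once one observes that the hypotheses of $(i)$ (namely $(x_n)\subset S_X$ with $\|\sum_{i=1}^k x_{n,i}\|_X \to k$ for all $k$-subsequences) are a special case of the hypotheses of $(ii)$. So I would dispose of $(ii)\Rightarrow(i)$ in one line.

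For $(i)\Rightarrow(ii)$, suppose $X$ is $FkR$ (resp. $CFkR$) and let $(x_n)\subset X$ satisfy $\|x_n\|_X\to 1$ and $\|\sum_{i=1}^k x_{n,i}\|_X\to k$ for every choice of $k$-subsequences. The idea is to pass to the normalized sequence $y_n = x_n/\|x_n\|_X$, which is well-defined for $n$ large (since $\|x_n\|_X\to 1 >0$, we may discard finitely many terms where $x_n=0$), and lies in $S_X$. First I would check that $\|\sum_{i=1}^k y_{n,i}\|_X\to k$ for every $k$-subsequence: given $k$ subsequences of $(y_n)$, they come from $k$ subsequences of $(x_n)$ along a common index set, and
\begin{equation*}
\left\|\sum_{i=1}^k y_{n,i}\right\|_X = \left\|\sum_{i=1}^k \frac{x_{n,i}}{\|x_{n,i}\|_X}\right\|_X,
\end{equation*}
which I would compare with $\frac{1}{\|x_{n,\cdot}\|_X}\|\sum_{i=1}^k x_{n,i}\|_X$ using $\big|\|a\|-\|b\|\big|\le\|a-b\|$ together with $\|x_{n,i}/\|x_{n,i}\|_X - x_{n,i}\|_X = |1-\|x_{n,i}\|_X| \to 0$. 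This shows $\|\sum_{i=1}^k y_{n,i}\|_X \ge k - o(1)$; since the reverse inequality $\|\sum_{i=1}^k y_{n,i}\|_X\le k$ is automatic, we get convergence to $k$. By $FkR$ (resp. $CFkR$) applied to $(y_n)\subset S_X$, the sequence $(y_n)$ is Cauchy (resp. relatively compact). Finally I would transfer this back to $(x_n)$: from $x_n = \|x_n\|_X\, y_n$ with $\|x_n\|_X\to 1$, the estimate $\|x_n - x_m\|_X \le \|x_n\|_X\|y_n - y_m\|_X + |\|x_n\|_X - \|x_m\|_X|\,\|y_m\|_X$ shows $(x_n)$ is Cauchy when $(y_n)$ is; and for the compact case, $(x_n)$ is the image of the relatively compact set $\{y_n\}$ under the (jointly continuous, on a bounded set) scalar multiplication paired with the convergent scalar sequence $\|x_n\|_X$, hence relatively compact — equivalently, every subsequence of $(x_n)$ has a Cauchy sub-subsequence obtained by first extracting a Cauchy sub-subsequence of $(y_n)$.

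The main obstacle, such as it is, is purely bookkeeping: one must be careful that the phrase "for any its $k$-subsequences" refers to $k$ subsequences indexed along a \emph{common} subset of $\mathbb{N}$ (so that the partial sum $\sum_{i=1}^k x_{n,i}$ makes sense termwise), and that the passage from $(x_n)$ to $(y_n)$ and back respects this indexing; there is no analytic difficulty beyond the triangle inequality and the convergence $\|x_n\|_X\to 1$. I would also remark that finitely many null terms at the start of $(x_n)$ are irrelevant to both the Cauchy and the relative-compactness conclusions, so the assumption that $y_n$ is defined for all large $n$ costs nothing.
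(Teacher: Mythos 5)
Your argument is correct: the reduction of $(ii)\Rightarrow(i)$ to a triviality and the normalization $y_n=x_n/\norm{x_n}{X}{}$ with the triangle-inequality estimate $\norm{y_{n,i}-x_{n,i}}{X}{}=\abs{1-\norm{x_{n,i}}{X}{}}{}{}\rightarrow 0$ for $(i)\Rightarrow(ii)$ is exactly the standard route. The paper itself gives no proof of this statement (it is quoted as Theorem 1 of Fan and Glicksberg), so there is nothing to compare against beyond noting that your write-up supplies the expected argument; the only cosmetic slip is the intermediate suggestion to factor out a single scalar $1/\norm{x_{n,\cdot}}{X}{}$ from the sum, which is not literally possible since the $k$ terms have different norms, but your actual estimate via $\sum_{i=1}^{k}\abs{1-\norm{x_{n,i}}{X}{}}{}{}\rightarrow 0$ bypasses this and is what carries the proof.
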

Analogously, we may find an equivalent condition for compact local fully $k$-rotundity in a Banach space $X$.
\begin{theorem}\label{thm:remark:2}
	Let $X$ be a Banach space. The following condition are equivalent.
	\begin{itemize}
		\item[$(i)$] $X$ is locally fully $k$-rotund (resp. compactly locally fully $k$-rotund).
		\item[$(ii)$] If for any sequence $(x_n)\subset{X}$ and $x\in{S_X}$, $\norm{x_n}{X}{}\rightarrow{1}$,  $\norm{x+\sum_{i=1}^{k}x_{n,i}}{X}{}\rightarrow{k+1}$ for any its subsequences $(x_{n,1}),(x_{n,2}),\cdots,(x_{n,k})$, then $(x_n)$ is convergent to $x$ (resp. $(x_n)$ forms a relatively compact set). 
	\end{itemize}
\end{theorem}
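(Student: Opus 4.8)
The plan is to mimic the standard argument that turns a "sphere" definition of a rotundity-type property into its "ball" analogue, as in Theorem \ref{thm:remark:1} (Theorem 1 in \cite{FaGl}), but now with the extra fixed point $x\in S_X$ on the left-hand side. The implication $(ii)\Rightarrow(i)$ is trivial: if $(x_n)\subset S_X$ satisfies the hypothesis in the definition of a point of $LFkR$ (resp. $CLFkR$) at $x$, then in particular $\norm{x_n}{X}{}\to 1$ and the subsequence condition $\norm{x+\sum_{i=1}^k x_{n,i}}{X}{}\to k+1$ holds, so $(ii)$ gives $x_n\to x$ (resp. $(x_n)$ relatively compact). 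Since this must hold for every $x\in S_X$, $X$ is $LFkR$ (resp. $CLFkR$).

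For the nontrivial direction $(i)\Rightarrow(ii)$, suppose $X$ is $LFkR$ (resp. $CLFkR$), let $x\in S_X$ and let $(x_n)\subset X$ with $\norm{x_n}{X}{}\to 1$ and $\norm{x+\sum_{i=1}^k x_{n,i}}{X}{}\to k+1$ for every choice of $k$ subsequences. The idea is to normalize: set $u_n=x_n/\norm{x_n}{X}{}\in S_X$ for all $n$ large enough that $x_n\neq 0$. First I would check that $(u_n)$ still satisfies the defining hypothesis at $x$, i.e. $\norm{x+\sum_{i=1}^k u_{n,i}}{X}{}\to k+1$ for any $k$ subsequences. This follows from the triangle inequality bounds
\begin{equation*}
\Bigl\|x+\sum_{i=1}^k x_{n,i}\Bigr\|_X-\sum_{i=1}^k\Bigl|1-\tfrac{1}{\norm{x_{n,i}}{X}{}}\Bigr|\,\norm{x_{n,i}}{X}{}\le\Bigl\|x+\sum_{i=1}^k u_{n,i}\Bigr\|_X\le k+1,
\end{equation*}
together with $\norm{x_{n,i}}{X}{}\to 1$, which forces the middle quantity to $k+1$. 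Applying the definition of a point of $LFkR$ (resp. $CLFkR$) to $(u_n)$ at $x$ yields $u_n\to x$ (resp. $(u_n)$ relatively compact). Finally, since $x_n=\norm{x_n}{X}{}\,u_n$ and $\norm{x_n}{X}{}\to 1$, we recover $x_n\to x$ (resp. $(x_n)$ is relatively compact, because it is the image of a relatively compact set under the asymptotically-identity scaling, or simply because $\norm{x_n-u_n}{X}{}=|\norm{x_n}{X}{}-1|\to 0$ so $(x_n)$ and $(u_n)$ have the same cluster behaviour).

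The only mild subtlety — the point I expect to be the main obstacle — is the bookkeeping with subsequences: the hypothesis in $(ii)$ quantifies over all $k$-tuples of subsequences of $(x_n)$, and one must make sure the passage to $(u_n)$ preserves this for \emph{every} such tuple simultaneously, which it does because the estimate above is uniform in the choice of indices once $n$ is large. In the compact case one should also note that relative compactness of $(u_n)$ transfers to $(x_n)$ since $\norm{x_n-u_n}{X}{}\to 0$, so any Cauchy subsequence of $(u_n)$ gives a Cauchy subsequence of $(x_n)$. This completes the proof.
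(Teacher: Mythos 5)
Your proof is correct, and it is exactly the normalization argument the paper has in mind: the paper omits the proof entirely, stating the theorem as an analogue of Theorem \ref{thm:remark:1} (Fan--Glicksberg), and your passage to $u_n=x_n/\norm{x_n}{X}{}$ together with the triangle-inequality estimate and the observation $\norm{x_n-u_n}{X}{}=\abs{\norm{x_n}{X}{}-1}{}{}\rightarrow 0$ supplies precisely the missing details, including the transfer of relative compactness in the $CLFkR$ case.
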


\begin{proposition}\label{prop:CLFR=>OC}
	Let $E$ be a Banach function space with the semi-Fatou property and let $k\in\mathbb{N}$, $k\geq{2}$. If $x\in{E^+}$ is a point of compact local fully $k$-rotundity, then $x$ is a point of order continuity.
\end{proposition}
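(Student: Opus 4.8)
The plan is to argue by contraposition: assuming $x \in E^+$ is not a point of order continuity, I will build a sequence witnessing the failure of compact local fully $k$-rotundity at $x$. Since $x \notin E_a$, there is a sequence $(z_n) \subset E^+$ with $z_n \le x$, $z_n \to 0$ a.e., but $\|z_n\|_E \not\to 0$; passing to a subsequence we may assume $\|z_n\|_E \ge \varepsilon > 0$ for all $n$. A standard disjointification trick (using $z_n \to 0$ a.e. and the semi-Fatou property to control tails) lets me replace $(z_n)$ by functions $u_n \le x$ supported on a sequence of sets $A_n$ with $\mu(A_n) \to 0$, still with $\|u_n\|_E \ge \varepsilon/2$, say; write $v_n = x\chi_{A_n}$, so $u_n \le v_n \le x$ and $\|v_n\|_E \ge \varepsilon/2$ as well. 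The idea is that $x$ and $x - v_n$ become ``almost disjoint'' in the limit, which should force $\|x + k(x - v_n)\|_E$ (after normalization) close to $k+1$ while $x - v_n \not\to x$.

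First I would normalize. Assume WLOG $\|x\|_E = 1$ and set $x_n = (x - v_n)/\|x - v_n\|_E$. Since $v_n \to 0$ a.e. and $|x - v_n| \le x \in E$, the semi-Fatou property (applied to the increasing sequence $x - v_n \uparrow$? — more carefully, to $|x-v_n|\wedge$ appropriate truncations, or directly via $x\chi_{A_n^c}\uparrow x$) yields $\|x - v_n\|_E \to \|x\|_E = 1$, hence $\|x_n\|_E \to 1$. Next, the key estimate: for any $k$ subsequences extracted from $(x_n)$, I claim $\|x + \sum_{i=1}^k x_{n,i}\|_E \to k+1$. The upper bound $\le k+1$ is the triangle inequality together with $\|x_n\|_E \to 1$. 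For the lower bound, observe that on the complement of $A_{n,1}\cup\cdots\cup A_{n,k}$ (a set whose complement has measure $\to 0$), each $x - v_{n,i}$ agrees with $x$, so $x + \sum_i (x - v_{n,i})$ dominates $(k+1)\,x\chi_{(\bigcup A_{n,i})^c}$ pointwise; since $x\chi_{(\bigcup A_{n,i})^c} \uparrow x$ a.e., the semi-Fatou property gives $\|(k+1)\,x\chi_{(\bigcup A_{n,i})^c}\|_E \to k+1$, and dividing the $x_{n,i}$ by their norms (which tend to $1$) changes this only negligibly. So the defining hypothesis of ``$x$ a point of $CLFkR$'' is satisfied by $(x_n)$, whence $(x_n)$ is relatively compact.

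Relative compactness of $(x_n)$ means some subsequence converges in norm to some $w \in E$. Since $\|x_n\|_E \to 1$ we get $\|w\|_E = 1$. But also $x_n = (x - v_n)/\|x-v_n\|_E \to x$ in measure (indeed $v_n \to 0$ a.e. hence in measure on sets of finite measure, and $\|x-v_n\|_E \to 1$), and norm convergence in a Banach function space implies convergence in measure on sets of finite measure; therefore $w = x$. Consequently $\|x_n - x\|_E \to 0$ along that subsequence, i.e. $\|(x - v_n)/\|x - v_n\|_E - x\|_E \to 0$; combined with $\|x - v_n\|_E \to 1$ this forces $\|v_n\|_E \to 0$, contradicting $\|v_n\|_E \ge \varepsilon/2 > 0$. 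This contradiction shows $x$ must be a point of order continuity.

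The main obstacle I anticipate is the disjointification/tail-control step: turning the abstract failure $z_n \not\to 0$ in norm into a clean sequence $v_n = x\chi_{A_n}$ with $\mu(A_n) \to 0$ and norms bounded below, using only the semi-Fatou property rather than full order continuity. One must be careful because without order continuity we cannot directly say $\|x\chi_{A_n}\|_E \to 0$ when $\mu(A_n)\to 0$ — indeed that is precisely what is being contradicted — so the bounded-below property has to be extracted from the original non-OC witness $(z_n)$ via a standard exhaustion argument (choose the $A_n$ inside $\{z_n > \delta\}$-type sets, or use that $z_n\to 0$ a.e.\ together with Egorov on sets of finite measure and a diagonal extraction). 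The semi-Fatou property is exactly the ingredient that makes the several ``$x\chi_{B_n}\uparrow x \Rightarrow \|x\chi_{B_n}\|_E \to \|x\|_E$'' steps legitimate, and it should be invoked explicitly at each such point.
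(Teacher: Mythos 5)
Your overall strategy (contradiction, replace the witness of non--order continuity by $y_n=x-v_n$ tending to $x$, verify the $CLFkR$ hypothesis for $(y_n)$, extract a norm-convergent subsequence, identify its limit as $x$ a.e., and contradict the lower norm bound) is exactly the paper's strategy, and the final steps of your argument are sound. The genuine gap is the ``disjointification trick'': it is \emph{not} true in general that a failure of order continuity at $x$ can be witnessed by functions of the form $v_n=x\chi_{A_n}$ with $\mu(A_n)\rightarrow 0$. Take $E=L^1+L^\infty$ on $[0,\infty)$ (which has the Fatou, hence semi-Fatou, property) and $x=\chi_{[0,\infty)}$: then $\norm{x\chi_A}{E}{}=\min(\mu(A),1)\rightarrow 0$ whenever $\mu(A_n)\rightarrow 0$, yet $x$ is not a point of order continuity, the failure being witnessed only by the tails $x\chi_{[n,\infty)}$, which live on sets of infinite measure. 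Since your lower bound $\norm{x+\sum_i x_{n,i}}{E}{}\rightarrow k+1$ leans on the pointwise identity ``$x-v_{n,i}$ agrees with $x$ off $A_{n,i}$'' and on $x\chi_{(\bigcup_i A_{n,i})^c}\uparrow x$, the whole estimate collapses once the $A_n$ cannot be taken of vanishing measure; note also that even when they can, $x\chi_{A_n^c}$ is not increasing unless the $A_n$ are nested, so the semi-Fatou property (stated for increasing sequences) does not apply as written.

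The paper circumvents both problems without any disjointification. One may always take the witness \emph{decreasing}: from $z_n\leq x$, $z_n\rightarrow 0$ a.e., $\norm{z_n}{E}{}\geq\varepsilon$, pass to $x_n=\sup_{m\geq n}z_m\downarrow 0$, and after rescaling assume $x\geq 2x_n$ and $\norm{x}{E}{}=1$. Then $y_n=x-x_n\uparrow x$ is genuinely increasing, so semi-Fatou applies directly, and the lower bound is obtained purely algebraically: since $\sum_{i=1}^k x_{n,i}\leq k\max_{1\leq i\leq k}x_{n,i}$,
\begin{equation*}
(k+1)x-\sum_{i=1}^{k}x_{n,i}\;\geq\;(k+1)x-k\max_{1\leq i\leq k}x_{n,i}=(k+2)\Bigl(x-\max_{1\leq i\leq k}x_{n,i}\Bigr)-\Bigl(x-2\max_{1\leq i\leq k}x_{n,i}\Bigr),
\end{equation*}
and the reverse triangle inequality together with semi-Fatou applied to the increasing sequences $x-\max_i x_{n,i}\uparrow x$ and $x-2\max_i x_{n,i}\uparrow x$ gives $\norm{x+\sum_{i=1}^k y_{n,i}}{E}{}\rightarrow k+1$. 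If you replace your reduction step by this decreasing-witness device and the $\max$ estimate, the remainder of your argument goes through essentially verbatim.
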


\begin{proof}
	For a contrary we may assume that there exists $(x_n)\subset{E^+}$ such that $x\geq{}x_n\geq{x_{n+1}}$ for all $n\in\mathbb{N}$, $x_n\rightarrow{0}$ a.e. and $\norm{x_n}{E}{}\rightarrow{d}>0.$ Without loss of generality we may suppose that $x\geq{2x_n}$ for all $n\in\mathbb{N}$. Define ${y_n=x-x_n}$ for every $n\in\mathbb{N}$. Clearly, $y_n\uparrow{x}$ a.e. Then, by the semi-Fatou property it follows that $\norm{y_n}{E}{}\uparrow\norm{x}{E}{}$. We may assume that $\norm{x}{E}{}=1$, because otherwise we replace $x$ by $x/\norm{x}{E}{}$ and $y_n$ by $y_n/\norm{x}{E}{}$ for all $n\in\mathbb{N}$. Then, since $x\geq{2}x_n$ for any $n\in\mathbb{N}$, by monotonicity of the norm in $E$ for any $k$-subsequences $(y_{n,{1}}),\cdots,(y_{n,{k}})$ of $(y_{n})$ we have
	\begin{align*}
    	k+1&\geq\norm{x+\sum_{i=1}^{k}y_{n,{i}}}{E}{}=\norm{(k+1)x-\sum_{i=1}^{k}x_{n,{i}}}{E}{}\\
	   &\geq\norm{(k+1)x-k\max_{1\leq{i}\leq{k}}\{x_{n,{i}}\}}{E}{}\\
	   &=\norm{(k+2)\left(x-\max_{1\leq{i}\leq{k}}\{x_{n,{i}}\}\right)-x+2\max_{1\leq{i}\leq{k}}\{x_{n,{i}}\}}{E}{}\\
	   &\geq(k+2)\norm{x-\max_{1\leq{i}\leq{k}}\{x_{n,{i}}\}}{E}{}-\norm{x-2\max_{1\leq{i}\leq{k}}\{x_{n,{i}}\}}{E}{}.
	\end{align*}
	Moreover, since $\norm{x}{E}{}=1$, by the semi-Fatou property it is easy to notice that 
	\begin{equation*}
	\norm{x-\max_{1\leq{i}\leq{k}}\{x_{n,{i}}\}}{E}{}\rightarrow{1}\quad\textnormal{and}\quad\norm{x-2\max_{1\leq{i}\leq{k}}\{x_{n,{i}}\}}{E}{}\rightarrow{1},
	\end{equation*}
	whence 
	\begin{equation}\label{equ:CLFkR=>OC}
	\norm{x+\sum_{i=1}^{k}y_{n,{i}}}{E}{}\rightarrow{k+1}.
	\end{equation}
	Therefore, by assumption that $x$ is a point of compact local fully $k$-rotundity there exist a subsequence $(y_{n_k})$ of $(y_n)$ and $z\in{E^+}$ such that $y_{n_k}$ converges to $z$ in norm of $E$. So, passing to subsequence and relabelling we may easily observe that $y_n\rightarrow{z}$ a.e., whence $x=z$ a.e. In consequence, we have
	\begin{equation*}
	\norm{x_n}{E}{}=\norm{x-y_n}{E}{}\rightarrow{0},
	\end{equation*}
	which contradicts with assumption that $\norm{x_n}{E}{}\geq{d}>0$ for any $n\in\mathbb{N}$ and completes the proof.
\end{proof}

Immediately, since the semi-Fatou property follows directly from the Fatou property, using the proof of the previous proposition and by the fact that a point of local fully $k$-rotundity is a point of compact local fully $k$-rotundity in Banach spaces we conclude the following result.

\begin{proposition}\label{prop:LFkR+=>OC}
Let $E$ be a Banach function space. If $x\in{E^+}$ is a point of local fully $k$-rotundity, then $x$ is a point of order continuity. Additionally, if $E^+$ is locally fully $k$-rotund, then $E$ is order continuous.
\end{proposition}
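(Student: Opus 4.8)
The plan is to reduce Proposition \ref{prop:LFkR+=>OC} directly to Proposition \ref{prop:CLFR=>OC} by two elementary observations. First, the Fatou property implies the semi-Fatou property: if $(x_n)\subset{E^+}$ and $x_n\uparrow{x}\in{E^+}$, then $\sup_n\norm{x_n}{E}{}\leq\norm{x}{E}{}<\infty$, so the Fatou property yields $\norm{x_n}{E}{}\uparrow\norm{x}{E}{}$, which is exactly the semi-Fatou condition. Since we assume throughout the paper that $E$ has the Fatou property, $E$ satisfies the hypothesis of Proposition \ref{prop:CLFR=>OC}. Second, in any Banach space a point of local fully $k$-rotundity is automatically a point of compact local fully $k$-rotundity: norm convergence of $(x_n)$ to $x$ is stronger than (indeed implies) relative compactness of $\{x_n\}$, so every sequence witnessing the $LFkR$ condition at $x$ also witnesses the $CLFkR$ condition at $x$.

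Combining these two observations gives the first assertion: if $x\in{E^+}$ is a point of local fully $k$-rotundity, then $x$ is a point of compact local fully $k$-rotundity, and since $E$ has the semi-Fatou property, Proposition \ref{prop:CLFR=>OC} applies to conclude that $x$ is a point of order continuity.

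For the additional claim, suppose $E^+$ is locally fully $k$-rotund, meaning every $\abs{x}{}{}$ with $x\in{S_E}$ is a point of local fully $k$-rotundity. By the first part, every such $\abs{x}{}{}$ is a point of order continuity in $E$. It remains to upgrade "every normalized $\abs{x}{}{}$ is a point of order continuity" to "$E\in(OC)$". This is routine: given any nonzero $y\in{E}$, write $y = \norm{y}{E}{}\cdot(y/\norm{y}{E}{})$; the element $x = y/\norm{y}{E}{}$ lies in $S_E$, so $\abs{x}{}{}=\abs{y}{}{}/\norm{y}{E}{}$ is a point of order continuity, and since $E_a$ is scaling-invariant and ideal (if $0\leq u\leq v$ a.e. and $v\in E_a$ then $u\in E_a$), it follows that $\abs{y}{}{}\in E_a$ and hence $y\in E_a$. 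As $y$ was arbitrary, $E=E_a$, i.e.\ $E$ is order continuous.

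The proof is entirely soft — there is no genuine obstacle, since all the analytic work was already done in Proposition \ref{prop:CLFR=>OC}. The only point requiring a moment's care is the implication "local fully $k$-rotundity $\Rightarrow$ compact local fully $k$-rotundity", which is the standard fact that a convergent sequence forms a relatively compact set; this is already noted in the paragraph preceding the statement. Accordingly I would keep the write-up very short, essentially just citing Proposition \ref{prop:CLFR=>OC} together with these two remarks.
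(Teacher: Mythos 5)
Your proof is correct and takes essentially the same route as the paper, which likewise obtains this proposition from Proposition \ref{prop:CLFR=>OC} by observing that the Fatou property (assumed throughout) implies the semi-Fatou property and that a point of local fully $k$-rotundity is automatically a point of compact local fully $k$-rotundity. Your explicit upgrade from ``every normalized $\abs{x}{}{}$ is a point of order continuity'' to ``$E$ is order continuous'' via scaling and the ideal property of $E_a$ is a detail the paper leaves implicit, and it is routine and correct.
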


Now, we discuss a relationship between $LFkR$ and $(LFkR)^*$ in a symmetric space $E$. 
First, we investigate an equivalent condition for a point of local fully $k$-rotundity in a symmetric space $E$.
  
\begin{theorem}\label{thm:x&x*:LFkR}
	Let $E$ be a symmetric space and $x\in{S_E}$. The following conditions are equivalent.
	\begin{itemize}
		\item[$(i)$] $x$ is point of local fully $k$-rotundity.
		\item[$(ii)$] $\abs{x}{}{}$ is point of local fully $k$-rotundity.
		\item[$(iii)$] $x^*$ is point of local fully $k$-rotundity.
	\end{itemize} 
\end{theorem}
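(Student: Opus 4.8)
The plan is to establish the chain $(i)\Leftrightarrow(ii)\Rightarrow(iii)\Rightarrow(ii)$, exploiting the fact that the norm in a symmetric space depends only on the decreasing rearrangement, together with the basic contraction properties of the maximal function and the Hardy--Littlewood--P\'olya relation. The implication $(i)\Leftrightarrow(ii)$ should be essentially immediate: since $\norm{x}{E}{}=\norm{\abs{x}{}{}}{E}{}$ and, more importantly, for any sequence $(x_n)$ the norms $\norm{x+\sum_{i=1}^k x_{n,i}}{E}{}$ need not coincide with $\norm{\abs{x}{}{}+\sum_{i=1}^k \abs{x_{n,i}}{}{}}{E}{}$, one cannot simply pass to absolute values in the hypothesis. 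Instead I would argue that if $\abs{x}{}{}$ is a point of $LFkR$ then so is $x$, by noting $\norm{x+\sum x_{n,i}}{E}{}\le\norm{\abs{x}{}{}+\sum\abs{x_{n,i}}{}{}}{E}{}\le k+1$, so the hypothesis on $x$ forces the corresponding statement for $\abs{x}{}{}$ with the sequence $(\abs{x_n}{}{})$, giving $\abs{x_n}{}{}\to\abs{x}{}{}$; then a standard lattice argument recovers $x_n\to x$ since convergence of $\norm{x_n}{E}{}$ to $\norm{x}{E}{}$ and of $\abs{x_n}{}{}$ to $\abs{x}{}{}$, combined with the defining condition, pins down the signs. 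The reverse direction is symmetric.

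For $(ii)\Rightarrow(iii)$, suppose $\abs{x}{}{}$ is a point of $LFkR$ and take $(x_n)\subset S_E$ with $\norm{x^*+\sum_{i=1}^k x_{n,i}^*}{E}{}\to k+1$ for every choice of $k$-subsequences. The key device is the inequality $(x^*+\sum y_i^*)^{**}\ge (x+\sum y_i)^{**}$ valid pointwise for the maximal function (superadditivity of $x\mapsto x^{**}$ on rearrangements), hence $x+\sum x_{n,i}\prec x^*+\sum x_{n,i}^*$ and, by $K$-monotonicity of $E$ (which holds since $E$ has the Fatou property), $\norm{x+\sum x_{n,i}}{E}{}\le\norm{x^*+\sum x_{n,i}^*}{E}{}\to k+1$. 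Thus the hypothesis applies to $\abs{x}{}{}$ (after the already established $(i)\Leftrightarrow(ii)$, i.e. using that $x^*$ being the object of interest we instead test the point $\abs{x}{}{}=x^*$; here one must be slightly careful and rather apply $LFkR$ of the point $x^*$ itself, which is what we are trying to prove) — so this direction in fact needs the genuine content: I would instead apply Theorem~\ref{thm:remark:2}, reduce to sequences with $\norm{x_n}{E}{}\to 1$, and use that $x^*_n\to x^*$ in measure together with the Kadec--Klee-type consequences already available. The cleanest route is: from $\norm{x^*+\sum x^*_{n,i}}{E}{}\to k+1$ deduce via the $\prec$-estimate that $\norm{x^*+x^*_{n}}{E}{}\to 2$ for each single sequence (take all $k$ subsequences equal to a fixed one and compare), then extract that $(x^*_n)$ is a minimizing-type sequence forcing $\norm{x^*-x^*_n}{E}{}\to 0$ by the $LFkR$ property of $x^*$ — but this is circular, so the real argument must transfer the $LFkR$ property of $\abs{x}{}{}$ to $x^*$ by a rearrangement substitution, replacing each $x_n$ by a function equimeasurable with it but "aligned" with $x^*$.

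The main obstacle, and where I would spend the most care, is precisely this alignment step in $(ii)\Rightarrow(iii)$ and its converse: given an arbitrary sequence realizing the $LFkR$ hypothesis for $x^*$, produce an equimeasurable sequence realizing it for $\abs{x}{}{}$ (and vice versa), so that norm-convergence of rearrangements can be pulled back. For the converse $(iii)\Rightarrow(ii)$, given $(x_n)$ with $\norm{\abs{x}{}{}+\sum\abs{x_{n,i}}{}{}}{E}{}\to k+1$, one passes to rearrangements, uses $\abs{x}{}{}+\sum\abs{x_{n,i}}{}{}\prec(\abs{x}{}{})^*+\sum(\abs{x_{n,i}}{}{})^*=x^*+\sum x^*_{n,i}$ (again superadditivity of $(\cdot)^{**}$ on monotone rearrangements) and $K$-monotonicity to get $\norm{x^*+\sum x^*_{n,i}}{E}{}\to k+1$, then $LFkR$ of $x^*$ yields $x^*_n\to x^*$; the final step is to upgrade a.e./measure convergence plus equality of rearrangement limits back to $\norm{\abs{x_n}{}{}-\abs{x}{}{}}{E}{}\to 0$, which requires a Kadec--Klee argument — here one invokes that a point of $LFkR$ is a $LUR$ point (hence an $H_g$ point) in the relevant sense, a fact that should follow from the definitions or from the cited literature \cite{FaGl,CHK,HuKoLe}. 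I expect the proof to be short modulo carefully invoking the superadditivity inequality for $x^{**}$, the $K$-monotonicity of $E$, and the equimeasurable-substitution trick, with the substitution being the only genuinely delicate point.
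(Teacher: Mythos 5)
Your proposal correctly senses where the difficulties lie, but at each of the three delicate points it stops short of the argument that actually closes the gap. For $(i)\Leftrightarrow(ii)$, your route via the lattice inequality $\norm{x+\sum x_{n,i}}{E}{}\le\norm{\abs{x}{}{}+\sum\abs{x_{n,i}}{}{}}{E}{}$ followed by ``a standard lattice argument pins down the signs'' does not work: norm convergence $\abs{x_n}{}{}\to\abs{x}{}{}$ gives no control on $x_n-x$, and there is no standard way to recover the signs from it. The paper instead multiplies the test sequence by the unimodular function $\chi_{\{x\ge0\}}-\chi_{\{x<0\}}$; since this preserves absolute values, symmetry of $E$ turns a test sequence for $\abs{x}{}{}$ into one for $x$ with \emph{equal} norms throughout, and the conclusion transfers back isometrically --- no sign recovery is ever needed. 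For $(ii)\Rightarrow(iii)$ you correctly guess that an equimeasurable substitution is required, but you do not carry it out: the substitution is $x_n\mapsto x_n\circ\sigma$ for a measure-preserving $\sigma$ with $x^*\circ\sigma=\abs{x}{}{}$ given by Ryff's theorem, and Ryff's theorem is only available after one first proves that $x$ is a point of order continuity (via Proposition \ref{prop:LFkR+=>OC}) so that $x^*(\infty)=0$. That prerequisite is absent from your outline.

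The most serious gap is in the step from $\norm{x_n^*-x^*}{E}{}\to0$ back to convergence of $x_n$ itself (your $(iii)\Rightarrow(ii)$, the paper's $(iii)\Rightarrow(i)$). You write that one ``upgrades a.e./measure convergence plus equality of rearrangement limits'' via a Kadec--Klee argument, but measure convergence of $x_n$ to $x$ is not available at that point --- convergence of rearrangements says nothing about where the mass of $x_n$ sits --- and the assertion that an $LFkR$ point is an $H_g$ point is essentially the statement being proved, so invoking it is circular. The paper's actual device is to apply hypothesis $(iii)$ a second time to the averaged sequence $y_n=(x+x_n)/2$ (after verifying $\norm{y_n}{E}{}\to1$ and $\norm{x+\sum y_{n,i}}{E}{}\to k+1$ by a triangle-inequality computation), obtaining $\norm{y_n^*-x^*}{E}{}\to0$ as well; Lemma 2.2 of \cite{CzeKam} then converts the two rearrangement convergences into global convergence in measure of $x_n$ to $x$, and Proposition 2.4 of \cite{CzeKam}, together with the order continuity of $x$ established beforehand, yields $\norm{x_n-x}{E}{}\to0$. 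This midpoint trick is the missing idea; without it the chain does not close. (A minor point: the inequality $x+\sum x_{n,i}\prec x^*+\sum x_{n,i}^*$ comes from \emph{sub}additivity of the maximal function, not superadditivity, though the inequality you use is the correct one.)
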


\begin{proof}
$(i)\Rightarrow(ii)$. Let $(x_n)\subset{S_E}$ be such that for any $k$-subsequences $(x_{n,1})$, $(x_{n,2})$,$\cdots,$$(x_{n,k})$ of $(x_n)$ we have
\begin{equation*}
	\norm{\abs{x}{}{}+\sum_{i=1}^{k}x_{n,i}}{E}{}\rightarrow{k+1}.
\end{equation*}
Then, since $\abs{x}{}{}=x\chi_{\{x\geq{0}\}}-x\chi_{\{x<0\}}$, by symmetry of $E$ we easily observe that $\norm{x_{n}(\chi_{\{x\geq{0}\}}-\chi_{\{x<{0}\}})}{E}{}=\norm{x_n}{E}{}$ for any $n\in\mathbb{N}$ and  
\begin{equation*}
\norm{x+\sum_{i=1}^{k}x_{n,i}(\chi_{\{x\geq{0}\}}-\chi_{\{x<{0}\}})}{E}{}=\norm{\abs{x}{}{}+\sum_{i=1}^{k}x_{n,i}}{E}{}\rightarrow{k+1}.
\end{equation*}
Therefore, by condition $(i)$ it follows that
\begin{equation*}
\norm{\abs{x}{}{}-x_n}{E}{}=\norm{x-x_n(\chi_{\{x\geq{0}\}}-\chi_{\{x<{0}\}})}{E}{}\rightarrow{0}.
\end{equation*}
$(ii)\Rightarrow(iii)$. Assume analogously $(x_n)\subset{S_E}$ and for any $k$-subsequences $(x_{n,1})$, $(x_{n,2})$,$\cdots,$$(x_{n,k})$ of $(x_n)$ we get
\begin{equation*}
\norm{{x^*}+\sum_{i=1}^{k}x_{n,i}}{E}{}\rightarrow{k+1}.
\end{equation*}
Next, by condition $(ii)$ and by Proposition \ref{prop:LFkR+=>OC} we obtain $x$ is a point of order continuity. Thus, by Lemma 2.5 in \cite{CieKolPan} we have $x^*(\infty)=0$. In consequence, by Ryff's theorem in \cite{BS} there exists a measure preserving transformation $\sigma:\supp(x)\rightarrow\supp(x^*)$ such that $x^*\circ\sigma=\abs{x}{}{}$ a.e. In case when $\mu(\supp(x))<\infty$, without loss of generality we may assume that $\sigma:I\rightarrow{I}$ (for more details see \cite{Royd}). Then, by symmetry of $E$ we have $\norm{x_n\circ\sigma}{E}{}=\norm{x_n}{E}{}$ for all $n\in\mathbb{N}$ and
\begin{equation*}
\norm{\abs{x}{}{}+\sum_{i=1}^{k}x_{n,i}\circ\sigma}{E}{}=\norm{{x^*}+\sum_{i=1}^{k}x_{n,i}}{E}{}\rightarrow{k+1}.
\end{equation*}
Hence, by $(ii)$ it follows that
\begin{equation*}
\norm{x^*-x_n}{E}{}=\norm{\abs{x}{}{}-x_n\circ\sigma}{E}{}\rightarrow{0}.
\end{equation*}
$(iii)\Rightarrow(i)$. Since $x^*\in{S_E}$ is a point of local fully $k$-rotundity, by Proposition \ref{prop:LFkR+=>OC} and in view of Lemma 2.6 in \cite{CieKolPan} we conclude that $x$ is a point of order continuity. Let $(x_n)\subset{S_E}$ be such that for any $k$-subsequences $(x_{n,1}),$ $(x_{n,2})$,$\cdots,$$(x_{n,k})$ of $(x_n)$ we have
\begin{equation}\label{equ:1:thm:E*LFkR}
\norm{x+\sum_{i=1}^{k}x_{n,i}}{E}{}\rightarrow{k+1}.
\end{equation}
Then, by subadditivity of the maximal function it is easy to see that
\begin{equation*}
x+\sum_{i=1}^{k}x_{n,i}\prec{x^*+\sum_{i=1}^{k}x_{n,i}^*},
\end{equation*}
whence, by symmetry and by the triangle inequality of the norm in $E$ we obtain
\begin{equation*}
\norm{x^*+\sum_{i=1}^{k}x_{n,i}^*}{E}{}\rightarrow{k+1}.
\end{equation*}
Thus, by assumption $(iii)$ we have
\begin{equation}\label{equ:2:thm:E*LFkR}
\norm{x_n^*-x^*}{E}{}\rightarrow{0}.
\end{equation}
Moreover, by the triangle inequality of the norm in $E$ it is easy to notice that
\begin{equation*}
\frac{1}{2}\norm{x+\sum_{i=1}^{k}x_{n,i}}{E}{}-\frac{1}{2}\sum_{i=1}^{k-1}\norm{x_{n,i}}{E}{}\leq\frac{1}{2}\norm{x+x_{n,k}}{E}{}\leq{1}.
\end{equation*}
Hence, since $(x_n)\subset{S_E}$, replacing a subsequence $(x_{n,k})$ by a sequence $(x_n)$ and denoting $y_n=(x+x_n)/2$ for all $n\in\mathbb{N}$,  by \eqref{equ:1:thm:E*LFkR} we get 
\begin{equation*}
1\geq\norm{y_n}{E}{}=\frac{1}{2}\norm{x+x_{n}}{E}{}\rightarrow{1}.
\end{equation*}
Then, for any $k$-subsequences $(y_{n,1}),$ $(y_{n,2})$,$\cdots,$$(y_{n,k})$ of $(y_n)$ we obtain
\begin{align*}
k+1\geq\norm{x+\sum_{i=1}^{k}y_{n,i}}{E}{}&=\frac{1}{2}\norm{(k+2)x+\sum_{i=1}^{k}x_{n,i}}{E}{}\\
&\geq\frac{1}{2}\norm{(k+2)\left(x+\sum_{i=1}^{k}x_{n,i}\right)-(k+1)\sum_{i=1}^{k}x_{n,i}}{E}{}\\
&\geq\frac{k+2}{2}\norm{x+\sum_{i=1}^{k}x_{n,i}}{E}{}-\frac{k+1}{2}\sum_{i=1}^{k}\norm{x_{n,i}}{E}{}.
\end{align*} 
Therefore, by \eqref{equ:1:thm:E*LFkR} this yields that
\begin{equation*}
\norm{x+\sum_{i=1}^{k}y_{n,i}}{E}{}\rightarrow{k+1}.
\end{equation*}
Consequently, by the inequality 
\begin{equation*}
x+\sum_{i=1}^{k}y_{n,i}\prec{x^*+\sum_{i=1}^{k}y_{n,i}^*},
\end{equation*}
and by symmetry of $E$ and in view of assumption $(iii)$ it follows that
\begin{equation*}
\norm{\left(\frac{x+x_n}{2}\right)^*-x^*}{E}{}=\norm{y_n^*-x^*}{E}{}\rightarrow{0}.
\end{equation*}
Hence, by \eqref{equ:2:thm:E*LFkR} and by Lemma 2.2 in \cite{CzeKam} it follows that $x_n\rightarrow{x}$ globally in measure. Finally, since $x$ is a point of order continuity, by \eqref{equ:2:thm:E*LFkR} and by Proposition 2.4 in \cite{CzeKam} we finish the proof.
\end{proof}

The immediate consequence of Theorem \ref{thm:x&x*:LFkR} are the following results. 

\begin{theorem}
	Let $E$ be a symmetric space. The following are equivalent.
	\begin{itemize}
		\item[$(i)$] $E$ is locally fully $k$-rotund.
		\item[$(ii)$] $E$ is $(LFkR)^+$.
		\item[$(iii)$] $E$ is $(LFkR)^*$.
	\end{itemize}
\end{theorem}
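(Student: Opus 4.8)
The plan is to obtain the three equivalences as an immediate corollary of Theorem \ref{thm:x&x*:LFkR}, applied pointwise and then quantified over the unit sphere. First I would rewrite each condition in quantified form: by the definition of local fully $k$-rotundity, $(i)$ asserts that \emph{every} $x\in S_E$ is a point of local fully $k$-rotundity; by the definition of $(LFkR)^+$, condition $(ii)$ asserts that $\abs{x}{}{}$ is a point of local fully $k$-rotundity for every $x\in S_E$; and by the definition of $(LFkR)^*$, condition $(iii)$ asserts that $x^*$ is a point of local fully $k$-rotundity for every $x\in S_E$.

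Next I would invoke Theorem \ref{thm:x&x*:LFkR}: for each fixed $x\in S_E$ the three statements ``$x$ is a point of local fully $k$-rotundity'', ``$\abs{x}{}{}$ is a point of local fully $k$-rotundity'' and ``$x^*$ is a point of local fully $k$-rotundity'' are equivalent. Here one uses that $\abs{x}{}{}\in S_E$ and $x^*\in S_E$ whenever $x\in S_E$, by symmetry of $E$ (since $\norm{x}{E}{}=\norm{\abs{x}{}{}}{E}{}=\norm{x^*}{E}{}$), so that Theorem \ref{thm:x&x*:LFkR} does apply to $x$. Taking the conjunction of these pointwise equivalences over all $x\in S_E$ yields at once $(i)\Leftrightarrow(ii)\Leftrightarrow(iii)$.

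If one prefers a chain of implications, the implications $(i)\Rightarrow(ii)\Rightarrow(iii)$ are trivial, since $\{\abs{x}{}{}:x\in S_E\}$ and $\{x^*:x\in S_E\}$ are subsets of $S_E$, so a property holding at every point of $S_E$ in particular holds at every nonnegative point, and a property holding at every nonnegative point of $S_E$ holds at every decreasing nonnegative one. The only implication carrying content is $(iii)\Rightarrow(i)$, and it is a single line: for $x\in S_E$ we have $x^*\in S_E$, which is a point of local fully $k$-rotundity by $(iii)$, whence $x$ is a point of local fully $k$-rotundity by the implication $(iii)\Rightarrow(i)$ of Theorem \ref{thm:x&x*:LFkR}. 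I do not anticipate any genuine obstacle; the one point needing a little care is the bookkeeping of quantifiers, namely that as $x$ ranges over $S_E$ the elements $\abs{x}{}{}$ range over all nonnegative members of $S_E$ and the elements $x^*$ over all decreasing nonnegative members of $S_E$, so that $(ii)$ and $(iii)$ really are the ``$+$'' and ``$*$'' versions of local fully $k$-rotundity rather than something formally weaker.
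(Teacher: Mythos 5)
Your proposal is correct and is exactly how the paper treats this statement: the paper presents the theorem as an immediate consequence of Theorem \ref{thm:x&x*:LFkR}, i.e.\ the pointwise equivalence quantified over $S_E$, with the same observation that $\abs{x}{}{}$ and $x^*$ remain in $S_E$ by symmetry. Nothing further is needed.
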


\begin{theorem}
	Let $E$ be a symmetric space and let;
	\begin{itemize}
		\item[$(i)$] $E$ is locally fully $k$-rotund.
		\item[$(ii)$] $E^+$ is locally fully $k$-rotund.
		\item[$(iii)$] $E^d$ is locally fully $k$-rotund.
	\end{itemize} 
	Then, $(i)\Leftrightarrow(ii)\Rightarrow(iii)$. If $E$ is order continuous, then $(iii)\Rightarrow(i)$.
\end{theorem}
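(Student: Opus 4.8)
The plan is to deduce this theorem as a straightforward corollary of Theorem~\ref{thm:x&x*:LFkR} together with Proposition~\ref{prop:LFkR+=>OC}. The three properties $(i)$, $(ii)$, $(iii)$ all concern whether certain distinguished elements are points of local fully $k$-rotundity: $(i)$ asks this for every $x \in S_E$, $(ii)$ for every $x \in S_{E^+}$ (i.e.\ every nonnegative element of the unit sphere), and $(iii)$ for every decreasing $x \in S_{E^d}$. Since decreasing nonnegative functions are in particular nonnegative, and nonnegative functions are in particular elements of $E$, the inclusions $S_{E^d} \subset S_{E^+} \subset S_E$ give $(i) \Rightarrow (ii) \Rightarrow (iii)$ immediately, with no appeal to any geometric result.

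For the reverse implication $(ii) \Rightarrow (i)$, I would invoke Theorem~\ref{thm:x&x*:LFkR}: for an arbitrary $x \in S_E$ it gives the equivalence of ``$x$ is a point of $LFkR$'' and ``$\abs{x}{}{}$ is a point of $LFkR$''. Since $\abs{x}{}{} \in S_{E^+}$ whenever $x \in S_E$, assumption $(ii)$ guarantees $\abs{x}{}{}$ is a point of $LFkR$, hence so is $x$. This closes the loop $(i) \Leftrightarrow (ii)$. The key input here is precisely the first equivalence $(i)\Leftrightarrow(ii)$ of Theorem~\ref{thm:x&x*:LFkR}, applied pointwise.

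The substantive part is $(iii) \Rightarrow (i)$ under the hypothesis that $E$ is order continuous. Given $x \in S_E$, I would again use Theorem~\ref{thm:x&x*:LFkR}, this time the equivalence ``$x$ is a point of $LFkR$'' $\Leftrightarrow$ ``$x^*$ is a point of $LFkR$''. Now $x^* \in S_{E^d}$ because $x^* \sim x$ and $E$ is symmetric, so $x^*$ is decreasing and lies on the unit sphere; assumption $(iii)$ then says $x^*$ is a point of $LFkR$, and the equivalence transfers this back to $x$. One should check that the proof of $(iii)\Rightarrow(i)$ in Theorem~\ref{thm:x&x*:LFkR} does not secretly use more than ``$x^*$ is a point of $LFkR$'' — but inspecting that proof, the only additional facts used are order continuity of $E$ (which we are assuming here) and the symmetry of $E$, so the argument goes through verbatim. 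The main obstacle, if any, is this bookkeeping: the equivalence in Theorem~\ref{thm:x&x*:LFkR} is stated for a fixed $x \in S_E$ and already builds in the consequence (via Proposition~\ref{prop:LFkR+=>OC}) that such an $x$ is a point of order continuity, so when we only assume $E^d \in (LFkR)$ we genuinely need the global order continuity hypothesis to run the $(iii)\Rightarrow(i)$ direction of that theorem for every $x$. With that hypothesis in hand, the deduction is routine.

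\begin{proof}
The implications $(i)\Rightarrow(ii)\Rightarrow(iii)$ follow at once from the inclusions $S_{E^d}\subset S_{E^+}\subset S_E$. Indeed, if $E$ is $LFkR$ then in particular every $x\in S_{E^+}$ is a point of $LFkR$, so $E^+$ is $LFkR$; and if $E^+$ is $LFkR$ then every decreasing $x\in S_{E^+}$ is a point of $LFkR$, so $E^d$ is $LFkR$.

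For $(ii)\Rightarrow(i)$, let $x\in S_E$. Then $\abs{x}{}{}\in S_{E^+}$, so by $(ii)$ the element $\abs{x}{}{}$ is a point of local fully $k$-rotundity. By the equivalence $(i)\Leftrightarrow(ii)$ in Theorem~\ref{thm:x&x*:LFkR}, $x$ is a point of local fully $k$-rotundity. Since $x\in S_E$ was arbitrary, $E$ is $LFkR$.

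Finally, assume $E$ is order continuous and $(iii)$ holds; we prove $(i)$. Let $x\in S_E$. By symmetry of $E$ we have $x^*\sim x$, hence $\norm{x^*}{E}{}=\norm{x}{E}{}=1$ and $x^*$ is decreasing, so $x^*\in S_{E^d}$. By $(iii)$, $x^*$ is a point of local fully $k$-rotundity. Applying the equivalence $(i)\Leftrightarrow(iii)$ in Theorem~\ref{thm:x&x*:LFkR}, we conclude that $x$ is a point of local fully $k$-rotundity. As $x\in S_E$ was arbitrary, $E$ is $LFkR$.
\end{proof}
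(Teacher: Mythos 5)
Your implications $(i)\Rightarrow(ii)\Rightarrow(iii)$ are fine, but both reverse implications rest on a misreading of what $(ii)$ and $(iii)$ assert, and this is a genuine gap. In the paper's convention (the paragraph in the Preliminaries on properties of $E^+$ and $E^d$), ``$E^+$ (resp.\ $E^d$) is locally fully $k$-rotund'' means that the \emph{entire} definition is relativized to the cone: the centre $x$ \emph{and} the test sequences $(x_n)$ are all taken from the cone. This is deliberately weaker than the properties $(LFkR)^+$ and $(LFkR)^*$, which require $\abs{x}{}{}$, resp.\ $x^*$, to be a point of $LFkR$ \emph{in $E$}, i.e.\ tested against arbitrary sequences in $S_E$; that stronger version is exactly what the preceding corollary of Theorem \ref{thm:x&x*:LFkR} treats, and if the two notions were the same the present theorem would be redundant. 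Your step ``by $(iii)$, $x^*$ is a point of local fully $k$-rotundity'' followed by a black-box citation of Theorem \ref{thm:x&x*:LFkR} therefore assumes more than $(iii)$ gives: $(iii)$ only controls sequences of nonnegative decreasing functions. The same objection applies to your $(ii)\Rightarrow(i)$. A symptom of the problem is visible in your own plan: if $(iii)$ really did make every $x^*$ a point of $LFkR$ in $E$, then Proposition \ref{prop:LFkR+=>OC} would already yield order continuity and the extra hypothesis in the statement would be vacuous; you notice the hypothesis is ``genuinely needed'' but do not resolve this tension.

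The paper closes the gap by re-running the \emph{proof} of Theorem \ref{thm:x&x*:LFkR}, $(iii)\Rightarrow(i)$, rather than citing its statement. Inspecting that proof, the $LFkR$ hypothesis on $x^*$ is only ever applied to the sequences $(x_n^*)$ and $(((x+x_n)/2)^*)$, both of which consist of nonnegative decreasing functions, so the cone-restricted hypothesis $(iii)$ suffices at those points; the only other input, namely that $x$ is a point of order continuity (obtained in Theorem \ref{thm:x&x*:LFkR} via Proposition \ref{prop:LFkR+=>OC}), is now supplied by the global order-continuity assumption. For $(ii)\Rightarrow(i)$ the paper first deduces order continuity from Proposition \ref{prop:LFkR+=>OC} --- whose proof tests the property only against the nonnegative sequence $y_n=x-x_n$, so the cone-restricted $(ii)$ is enough --- and then chains $(ii)\Rightarrow(iii)\Rightarrow(i)$. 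If you recast your argument along these lines, checking which test sequences the hypothesis is actually applied to instead of invoking Theorem \ref{thm:x&x*:LFkR} as a black box, it becomes correct and coincides with the paper's.
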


\begin{proof}
	$(i)\Rightarrow(ii)\Rightarrow(iii)$. It is obvious. \\
	$(iii)\Rightarrow(i)$. We proceed analogously as in the proof of Theorem \ref{thm:x&x*:LFkR} under the assumption that $E$ is order continuous.\\
	$(ii)\Rightarrow(i)$. Immediately, by Proposition \ref{prop:LFkR+=>OC} we get $E$ is order continuous. Finally, since $(ii)\Rightarrow(iii)$ and $(iii)\Rightarrow(i)$ we complete the proof. 
\end{proof}

Now, we investigate a correspondence between local uniform rotundity and local fully $k$-rotundity in Banach spaces.  

\begin{theorem}
	Let $X$ be a Banach space. If $X$ is locally uniformly rotund, then $X$ is locally fully $k$-rotund.
\end{theorem}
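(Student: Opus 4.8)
The plan is to verify directly that an arbitrary point $x\in S_X$ is a point of local fully $k$-rotundity, invoking local uniform rotundity precisely at that point $x$. So I will fix $x\in S_X$ together with a sequence $(x_n)\subset S_X$ having the property that $\norm{x+\sum_{i=1}^{k}x_{n,i}}{X}{}\to k+1$ for \emph{every} choice of $k$ subsequences $(x_{n,1}),\dots,(x_{n,k})$ of $(x_n)$, and I will aim to conclude $\norm{x_n-x}{X}{}\to 0$.

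The key point is that the hypothesis is required to hold for \emph{every} such choice, so in particular we may take all $k$ of the subsequences to be the sequence $(x_n)$ itself; this single (diagonal) instance already gives $\norm{x+kx_n}{X}{}\to k+1$. From here I extract a convexity estimate at the midpoint of $x$ and $x_n$: writing $x+kx_n=(x+x_n)+(k-1)x_n$ and applying the triangle inequality together with $\norm{x_n}{X}{}=1$ yields $\norm{x+x_n}{X}{}\ge\norm{x+kx_n}{X}{}-(k-1)$, hence $\liminf_n\norm{x+x_n}{X}{}\ge 2$; combined with the elementary bound $\norm{x+x_n}{X}{}\le\norm{x}{X}{}+\norm{x_n}{X}{}=2$ this forces $\norm{x+x_n}{X}{}\to 2=2\norm{x}{X}{}$.

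Finally I apply local uniform rotundity at $x$. Since $(x_n)\subset S_X$ we have $\norm{x_n}{X}{}\to\norm{x}{X}{}$, and we have just shown $\norm{x_n+x}{X}{}\to 2\norm{x}{X}{}$; as $X$ is $LUR$, the point $x$ is an $LUR$ point, and therefore $\norm{x_n-x}{X}{}\to 0$. Thus $x$ is a point of $LFkR$, and since $x\in S_X$ was arbitrary, $X$ is locally fully $k$-rotund.

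I expect no genuine obstacle beyond the observation that the diagonal instance (all $k$ subsequences equal to $(x_n)$) suffices; once this is noticed, only the triangle inequality and the definition of an $LUR$ point are needed, and the hypothesis $k\geq 2$ is used only to guarantee $k-1\geq 1>0$ in the estimate above. If one prefers, the same argument may be run through the reformulation in Theorem \ref{thm:remark:2}, but this is unnecessary here because the sequence $(x_n)$ already lies on the unit sphere.
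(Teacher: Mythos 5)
Your proof is correct and follows essentially the same route as the paper's: both reduce the hypothesis, via the triangle inequality, to $\norm{x_n+x}{X}{}\rightarrow 2$ together with $\norm{x_n}{X}{}\rightarrow 1=\norm{x}{X}{}$, and then invoke local uniform rotundity at $x$. The only cosmetic difference is that the paper argues through the reformulation in which merely $\norm{x_n}{X}{}\rightarrow 1$ is assumed (and hence normalizes the sequence), whereas you stay on the unit sphere and use the diagonal choice of subsequences, which is legitimate since a sequence is a subsequence of itself.
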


\begin{proof}
	Let $k\in\mathbb{N}$, $k\geq{2}$, and let $(x_n)\subset{X}$ and $x\in{S_X}$ be such that for any $k$-subsequences $(x_{n,{1}}),(x_{n,{2}}),\cdots,(x_{n,{k}})$ of a sequence $(x_n)$,
	\begin{equation}\label{equ:1:thm:LUR}
	\norm{\sum_{i=1}^{k}x_{n,{i}}+x}{X}{}\rightarrow{k+1}\quad\textnormal{and}\quad\norm{x_n}{X}{}\rightarrow{1}.
	\end{equation}
	By the triangle inequality of the norm in $X$ we notice that
	\begin{align*}
	\frac{1}{2}\norm{\sum_{i=1}^{k}x_{n,{i}}+x}{X}{}-\frac{1}{2}\sum_{i=2}^{k}\norm{x_{n,{i}}}{X}{}
	\leq&\frac{1}{2}\norm{\sum_{i=1}^{k}x_{n,{i}}+x}{X}{}-\frac{1}{2}\norm{\sum_{i=2}^{k}x_{n,{i}}}{X}{}\\
	\leq&\frac{1}{2}\norm{x_{n,{1}}+x}{X}{}\leq{1}
	\end{align*}
	for any $k\in\mathbb{N}$. Then, for any subsequence $(x_{n,{1}})$ of $(x_n)$, by \eqref{equ:1:thm:LUR} we have 
	\begin{equation}\label{equ:2:thm:LUR}
	\norm{x_{n,{1}}+x}{X}{}\rightarrow{2}\quad\textnormal{and}\quad\norm{x_{n,{1}}}{X}{}\rightarrow{1}.
	\end{equation}
	Define for all $n\in\mathbb{N},$
	\begin{equation*}
	u_{n}=\frac{x_{n}}{\norm{x_{n}}{X}{}}.
	\end{equation*}
	Then, $(u_{n})\subset{S_X}$ and by \eqref{equ:1:thm:LUR} we get
	\begin{equation}\label{equ:3:thm:LUR}
	\norm{u_{n}-x_{n}}{X}{}=\norm{x_{n}}{X}{}\abs{1-\frac{1}{\norm{x_{n}}{X}{}}}{}{}\rightarrow{0}.
	\end{equation}
	Next, passing to subsequence $(u_{n,{1}})$, by the triangle inequality of the norm in $X$ and by \eqref{equ:2:thm:LUR} and \eqref{equ:3:thm:LUR} it follows that
	\begin{equation*}
	\norm{u_{n,{1}}+x}{X}{}\rightarrow{2}.
	\end{equation*}
	Thus, by assumption that $X$ is locally uniformly rotund and by \eqref{equ:3:thm:LUR} we obtain
	\begin{equation*}
	\norm{x_{n,{1}}-x}{X}{}\rightarrow{0}.
	\end{equation*}
	Finally, since $(x_{n,{1}})$ is arbitrary chosen subsequence of $(x_n)$ we get the end of the proof.
\end{proof}

\begin{theorem}\label{thm:CLFkR*=>ULUKM}
	Let $E$ be a symmetric space. If $E^d$ is compactly locally fully $k$-rotund and strictly $K$-monotone, then $E$ is upper locally uniformly $K$-monotone.
\end{theorem}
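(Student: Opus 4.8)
The plan is to reduce to the positive decreasing cone, attach to a given majorizing sequence an auxiliary sequence lying on $S_E\cap E^d$ that is tailored to the defining test of compact local fully $k$-rotundity at $x^*$, apply that property to extract a norm-convergent subsequence, and finally identify its limit by strict $K$-monotonicity.

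First I would reduce the problem. Since $x\prec x_n$ is equivalent to $x^*\prec x_n^*$, since $\norm{y}{E}{}=\norm{y^*}{E}{}$ by symmetry, and since the $ULUKM$ condition only concerns $x^*$ and $x_n^*$, it suffices to establish that condition for $x\in E^d$ and $(x_n)\subset E^d$; the case $x=0$ is immediate, so I normalize $\norm{x}{E}{}=1$. Given $(x_n)\subset E^d$ with $x\prec x_n$ and $\norm{x_n}{E}{}\to 1$, I set $v_n=\tfrac12(x+x_n)\in E^d$. As $x,x_n,v_n$ are nonnegative and decreasing, $f\mapsto f^{**}$ is additive on them, so $v_n^{**}=\tfrac12(x^{**}+x_n^{**})\ge x^{**}$, that is $x\prec v_n$; since the Fatou property makes $E$ $K$-monotone, $1=\norm{x}{E}{}\le\norm{v_n}{E}{}\le\tfrac12(\norm{x}{E}{}+\norm{x_n}{E}{})\to 1$, hence $\norm{v_n}{E}{}\to 1$. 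Then I pass to $\tilde v_n=v_n/\norm{v_n}{E}{}\in S_E\cap E^d$, noting $\norm{\tilde v_n-v_n}{E}{}=\abs{1-\norm{v_n}{E}{}}{}{}\to 0$.

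The main point is to verify that $(\tilde v_n)$ is a test sequence for compact local fully $k$-rotundity at $x=x^*$. Fix any $k$-subsequences $(\tilde v_{n,1}),\dots,(\tilde v_{n,k})$ of $(\tilde v_n)$; each $v_{n,i}$ still satisfies $x\prec v_{n,i}$ and $\norm{v_{n,i}}{E}{}\to 1$, and additivity of the maximal function on the decreasing cone gives $\bigl(x+\sum_{i=1}^k v_{n,i}\bigr)^{**}=x^{**}+\sum_{i=1}^k v_{n,i}^{**}\ge(k+1)x^{**}$, so $(k+1)x\prec x+\sum_{i=1}^k v_{n,i}$; $K$-monotonicity then yields $\norm{x+\sum_{i=1}^k v_{n,i}}{E}{}\ge k+1$, while the triangle inequality bounds this above by $\norm{x}{E}{}+\sum_{i=1}^k\norm{v_{n,i}}{E}{}\to k+1$, whence $\norm{x+\sum_{i=1}^k v_{n,i}}{E}{}\to k+1$ and, after the $o(1)$ correction, $\norm{x+\sum_{i=1}^k\tilde v_{n,i}}{E}{}\to k+1$. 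Consequently $\{\tilde v_n\}$ is relatively compact in $E$. To conclude, I take any subsequence with $\tilde v_{n_j}\to v$ in $E$: then $v\in E^d$ and $v_{n_j}\to v$, so $x\prec v$ (the relation $\prec$ is preserved under $E$-norm convergence, which forces convergence in $L^1+L^\infty\supset E$) and $\norm{v}{E}{}=\lim_j\norm{v_{n_j}}{E}{}=1=\norm{x}{E}{}$; strict $K$-monotonicity of $E^d$ then forces $v=x$. Since every convergent subsequence of $(\tilde v_n)$ has limit $x$, I obtain $\tilde v_n\to x$, hence $v_n\to x$ and $x_n=2v_n-x\to x$, i.e. $\norm{x^*-x_n^*}{E}{}\to 0$.

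I expect the hard part to be the construction in the third paragraph: producing, out of the bare data $x\prec x_n$ and $\norm{x_n}{E}{}\to\norm{x}{E}{}$, a sphere sequence in $E^d$ on which $x$ plus a $k$-fold sum saturates the norm at $k+1$. This is precisely what makes compact local fully $k$-rotundity at $x^*$ usable, and it hinges on the additivity of $f\mapsto f^{**}$ on nonnegative decreasing functions, which lets the Hardy--Littlewood--P\'olya relation interact with the norm through $K$-monotonicity. The remaining compactness-and-uniqueness step is then routine, and, notably, order continuity of $E$ is not needed for this implication.
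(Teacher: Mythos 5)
Your proof is correct and follows essentially the same route as the paper's: both squeeze $\norm{x^*+\sum_{i=1}^{k}(\cdot)}{E}{}$ between the Hardy--Littlewood--P\'olya lower bound $(k+1)x\prec\cdots$ (via additivity of the maximal function on the decreasing cone and $K$-monotonicity) and the triangle-inequality upper bound in order to activate compact local fully $k$-rotundity at $x^*$, then identify the limit of the extracted subsequence through pointwise convergence of the maximal functions and strict $K$-monotonicity. The only difference is cosmetic: the paper tests directly with $(x_n^*)$ itself, using the variant of the definition in which $\norm{x_n}{E}{}\rightarrow 1$ replaces membership in $S_E$, so your averaged and renormalized sequence $\tilde v_n=\frac{1}{2}(x+x_n)/\norm{\frac{1}{2}(x+x_n)}{E}{}$ is a harmless but unnecessary detour.
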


\begin{proof}
	Let $(x_n)\subset{E}$ and $x\in{E}$ be such that $x\prec{x_n}$ for all $n\in\mathbb{N}$, $\norm{x_n}{E}{}\rightarrow\norm{x}{E}{}$. Without loss of generality we may assume that $\norm{x}{E}{}=1$. Then, we have
	\begin{equation}\label{equ:1:thm:ULUKM}
	\norm{x_n}{E}{}\rightarrow{1}.
	\end{equation}
	Moreover, since $x\prec{x_n}$ for all $n\in\mathbb{N}$, for any $k$-subsequences $(x_{n,{1}}),(x_{n,{2}}),\cdots,(x_{n,{k}})$ of $(x_n)$ we get
	\begin{equation*}
	(k+1)x\prec{x^*+\sum_{i=1}^{k}x_{n,{i}}^*}.
	\end{equation*}
	Hence, by symmetry and by the triangle inequality of the norm of $E$ and by \eqref{equ:1:thm:ULUKM} it follows that
	\begin{equation*}
	\norm{x^*+\sum_{i=1}^{k}x_{n,{i}}^*}{E}{}\rightarrow{k+1}.
	\end{equation*}
	In consequence, by assumption that $E^d$ is compactly locally fully $k$-rotund, there exists a subsequence $(x_{n_k})$ of $(x_n)$ such that $x_{n_k}^*$ converges to $y\in{E}$ in norm of $E$. Hence, by Lemma 3.2 in \cite{KPS} we get $y=y^*$ a.e. and
	$$\norm{x_{n_k}^*-y^*}{E}{}\rightarrow{0}.$$ 
	Therefore, by Proposition 5.9 in \cite{BS} we have for all $t>0,$
	\begin{equation*}
	x_{n_k}^{**}(t)\rightarrow{y^{**}(t)}.
	\end{equation*}
	Thus, since $x\prec x_n$ for any $n\in\mathbb{N}$ we obtain $x\prec{y}$. Finally, since $\norm{x}{E}{}=\norm{y}{E}{}=1$, by assumption that $E$ is strictly $K$-monotone we get $x^*=y^*$ a.e. and so by the double extract sequence theorem we conclude $$\norm{x_n^*-x^*}{E}{}\rightarrow{0}.$$
\end{proof}

\section{Fully $k$-rotundity}

In this section we investigate a relationship between (compact) fully $k$-rotundity, decreasing (increasing) uniform $K$-monotonicity and reflexivity in a symmetric space $E$. First, we show a connection between compact fully $k$-rotundity and order continuity in $E$.  

\begin{proposition}\label{prop:1:CFR*=>OC}
	Let $E$ be a symmetric space. If $E^d$ is compactly fully $k$-rotund, then $E$ is order continuous.
\end{proposition}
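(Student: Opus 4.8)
The plan is to argue by contradiction: assuming $E$ is not order continuous, I would construct a sequence on the unit sphere of $E^d$ that witnesses the failure of compact full $k$-rotundity on $E^d$. The first step is to put the failure of order continuity into a normal form on the cone. If $E\notin(OC)$, fix $y\in E\setminus E_a$ and set $x:=y^*\in E$. For any $z\in E$, if $\norm{z^*\chi_{[0,t)}}{E}{}\to 0$ as $t\to 0^+$ and (when $\alpha=\infty$) $\norm{z^*\chi_{[t,\infty)}}{E}{}\to 0$ as $t\to\infty$, then $z\in E_a$: splitting an arbitrary $0\le u_n\le z$ with $u_n\to 0$ a.e. as $u_n\chi_{[0,1/m)}+u_n\chi_{[1/m,M)}+u_n\chi_{[M,\infty)}$, the two outer pieces are controlled (uniformly in $n$, for $m,M$ large) by the assumed tail estimates and symmetry, while the middle piece, bounded and supported on a set of finite measure, tends to $0$ in norm as $n\to\infty$ since $\mu(\{u_n>\varepsilon\}\cap[1/m,M))\to 0$ and $\phi_E$ controls norms of characteristic functions. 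Hence $y\notin E_a$ forces $\delta_0:=\lim_{t\to 0^+}\norm{x\chi_{[0,t)}}{E}{}>0$, or $\alpha=\infty$ and $\delta_\infty:=\lim_{t\to\infty}\norm{x\chi_{[t,\infty)}}{E}{}>0$.

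Next comes the construction. If $\delta_0>0$, choose $t_n\downarrow 0$ and set $f_n:=x\chi_{[0,t_n)}$; if $\delta_\infty>0$ with $x^*(\infty)=0$, choose $t_n\uparrow\infty$ and set $f_n:=(x\chi_{[t_n,\infty)})^*=x^*(\,\cdot\,+t_n)$ (using $x=x^*$). In both cases $(f_n)$ is a chain in $E^d$ (totally ordered by $\le$), $f_n\to 0$ a.e., and $c_n:=\norm{f_n}{E}{}$ decreases to $c\in\{\delta_0,\delta_\infty\}>0$; put $g_n:=f_n/c_n$, so $(g_n)\subset E^d$ with $\norm{g_n}{E}{}=1$. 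Since $g_n\to 0$ a.e. and norm convergence implies local convergence in measure, no subsequence of $(g_n)$ can converge in norm, so $(g_n)$ is not relatively compact. For any $k$-subsequences $(g_{n,1}),\dots,(g_{n,k})$ of $(g_n)$, let $p_i(n)\to\infty$ be the index selected by the $i$-th; since the $f_m$ form a chain, $\sum_{i=1}^k f_{p_i(n)}\ge k\,\min_{1\le i\le k}f_{p_i(n)}$ and $\norm{\min_{1\le i\le k}f_{p_i(n)}}{E}{}=\min_{1\le i\le k}c_{p_i(n)}$, so that
\begin{equation*}
k=\sum_{i=1}^k\norm{g_{n,i}}{E}{}\ge\norm{\sum_{i=1}^k g_{n,i}}{E}{}\ge\Bigl(\sum_{i=1}^k\frac{1}{c_{p_i(n)}}\Bigr)\min_{1\le i\le k}c_{p_i(n)}\longrightarrow\frac{k}{c}\cdot c=k,
\end{equation*}
because $c_{p_i(n)}\to c$ for each $i$. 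Thus $\norm{\sum_{i=1}^k g_{n,i}}{E}{}\to k$ while $(g_n)$ is not relatively compact, contradicting that $E^d$ is compactly fully $k$-rotund; hence $E$ is order continuous.

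The remaining case is $\alpha=\infty$, $\delta_\infty>0$ and $x^*(\infty)=c_0>0$, which by Remark~\ref{rem:FP&phi} forces $\phi(\infty)<\infty$ and $\chi_{[0,\infty)}\in E$. Here I would instead take $g_n:=\chi_{[0,t_n)}/\phi(t_n)$ with $t_n\uparrow\infty$: now $f_n=\chi_{[0,t_n)}$ is an increasing chain with $c_n=\phi(t_n)\uparrow\phi(\infty)>0$, the same estimate gives $\norm{\sum_{i=1}^k g_{n,i}}{E}{}\to k$, and $(g_n)$ is still not relatively compact since $g_n\to\phi(\infty)^{-1}\chi_{[0,\infty)}$ a.e. while $\norm{g_n-\phi(\infty)^{-1}\chi_{[0,\infty)}}{E}{}\ge\phi(\infty)^{-1}\norm{\chi_{[t_n,\infty)}}{E}{}-o(1)\to 1$ (using $\norm{\chi_{[t_n,\infty)}}{E}{}=\phi(\infty)$ by symmetry). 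I expect the only delicate step to be the reduction in the first paragraph — making sure that non-order-continuity of a general element, after rearrangement, really localizes to the behaviour of $x^*$ near $0$ or near $\infty$, and that in the near-$\infty$ case with $x^*(\infty)>0$ the candidate sequence genuinely fails to be relatively compact; once this normal form is secured, the construction and the displayed estimate are entirely routine.
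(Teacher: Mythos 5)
Your overall strategy coincides with the paper's: argue by contradiction, build a normalized monotone chain $(g_n)$ in $S_E\cap E^d$, use the pointwise bound $\sum_{i=1}^k g_{n,i}\ge\bigl(\sum_{i=1}^k 1/c_{p_i(n)}\bigr)\min_{1\le j\le k} f_{p_j(n)}$ together with the convergence of the norms along the chain to force $\norm{\sum_{i=1}^{k}g_{n,i}}{E}{}\rightarrow k$, and then identify the only possible a.e.\ limit to contradict relative compactness. Your third case ($x^*(\infty)>0$) is essentially the paper's first step (which uses $\chi_{[0,n)}$ normalized and the Fatou property in the same way), and the estimates and non-compactness arguments in all three of your cases are correct. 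Where you diverge is the order-continuity step: the paper takes an arbitrary $(x_n)\subset E^+$ with $x_n\downarrow 0$ a.e.\ and $\inf_n\norm{x_n}{E}{}=d>0$, normalizes $u_n=x_n/\norm{x_n}{E}{}$, passes to the rearrangements via $\sum_i u_{n,i}\prec\sum_i u_{n,i}^*$, and then only needs the elementary fact (Property 2.12 in \cite{KPS}) that $x_n\downarrow 0$ a.e.\ with $x_1^*(\infty)=0$ forces $x_n^*\rightarrow 0$ a.e.; this bypasses your normal-form reduction entirely.

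The one genuine gap is in that reduction. The claim that the two tail conditions on $z^*$ imply $z\in E_a$ is true, but the proof you sketch --- splitting the domain as $[0,1/m)\cup[1/m,M)\cup[M,\infty)$ --- controls the outer pieces only when $z$ is already decreasing: for general $z$ the function $\abs{z}{}{}\chi_{[M,\infty)}$ need not be dominated in distribution by any tail $z^*\chi_{[t,\infty)}$ (take $z$ supported entirely in $[M,\infty)$), so ``symmetry'' does not give the claimed uniform bound on $\norm{u_n\chi_{[M,\infty)}}{E}{}$. Since you apply the contrapositive to the generally non-decreasing element $y\notin E_a$, this step fails as written. Two standard repairs: split by level sets of $\abs{z}{}{}$ rather than by position (write $u_n\le u_n\chi_{\{u_n>\varepsilon\}}+\min(\varepsilon,\abs{z}{}{})$, note that $\mu\{u_n>\varepsilon\}\rightarrow 0$ because $\{\abs{z}{}{}>\varepsilon\}$ has finite measure once $z^*(\infty)=0$, and estimate $\min(\varepsilon,z^*)\le\varepsilon\chi_{[0,T)}+z^*\chi_{[T,\infty)}$); or first invoke the equivalence between $y\in E_a$ and $y^*\in E_a$ (Lemma 2.6 in \cite{CieKolPan}, which the paper itself uses elsewhere), so that your domain-splitting is applied only to the decreasing element $x=y^*$. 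With either repair the proof is complete.
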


\begin{proof}
	First we prove that $\phi(\infty)=\infty$. In view of Remark \ref{rem:FP&phi}, we may assume for a contrary that there exists $x\in{E}$ such that $x^*(\infty)>0$. Then, we easily observe that $L^\infty\hookrightarrow{E}$ with some constant $C>0$. Define $x=\chi_{[0,\infty)}$ and $x_n=\chi_{[0,n)}$ for every $n\in\mathbb{N}$. Clearly, we have $x,x_n\in{E}$ for all $n\in\mathbb{N}$, $x_n\uparrow{x}$ a.e. and $\sup_{n\in\mathbb{N}}\norm{x_n}{E}{}\leq\norm{x}{E}{}<\infty$. Hence, by the Fatou property we conclude $\norm{x_n}{E}{}\uparrow\norm{x}{E}{}$. Denote $v_n=x_n/\norm{x_n}{E}{}$ for all $n\in\mathbb{N}$. Let $(v_{n,{1}}),\cdots,(v_{n,{k}})$ be any $k$-subsequences of $(v_n)$. Then, by symmetry of $E$ we get
	\begin{align*}
	k\geq\norm{\sum_{i=1}^{k}v_{n,{i}}}{E}{}&\geq\min_{1\leq{i}\leq{k}}\{\norm{x_{n,{i}}}{E}{}{}\}\abs{\sum_{i=1}^{k}\frac{1}{\norm{x_{n,{i}}}{E}{}}}{}{}\\
	&\geq\min_{1\leq{i}\leq{k}}\{\norm{x_{n,{i}}}{E}{}{}\}\frac{k}{\norm{x}{E}{}}
	\end{align*}
	for all $n\in\mathbb{N}$, whence 
	\begin{equation}\label{equ:CFR*=>OC}
	\norm{\sum_{i=1}^{k}v_{n,{i}}}{E}{}\rightarrow{}k.
	\end{equation}
	Consequently, since $v_n=v_n^*$ for any $n\in\mathbb{N}$ and by assumption that $E^d$ is compactly fully $k$-rotund, it follows that $(v_n)$ forms a relatively compact set. Therefore, passing to subsequence and relabelling if necessary we may suppose that $v_n$ converges to $v\in{S_E}$ in norm of $E$ as well as a.e. on $I$. Thus, since $v_n\rightarrow{x}/{\norm{x}{E}{}}$ a.e. we conclude $v=x/\norm{x}{E}{}$ a.e. Moreover, by the triangle inequality of the norm in $E$ we obtain
	\begin{equation*}
	\frac{\norm{x_n-x}{E}{}}{\norm{x_n}{E}{}}\leq\norm{v_n-v}{E}{}+\norm{x}{E}{}\abs{\frac{1}{\norm{x_n}{E}{}}-\frac{1}{\norm{x}{E}{}}}{}{}
	\end{equation*}
	for every $n\in\mathbb{N}$. In consequence, we have $$\norm{x_n-x}{E}{}\rightarrow{0}.$$ On the other hand, by symmetry of $E$ it is easy to see that $\norm{x_n-x}{E}{}=\norm{x}{E}{}$ for all $n\in\mathbb{N}$, which gives us a contradiction and proves that $\phi(\infty)=\infty$. Now, we show that $E$ is order continuous. Let us assume for a contrary that there exists a sequence $(x_n)\subset{E^+}$ such that $x_n\downarrow{0}$ a.e. and $d=\inf_{n\in\mathbb{N}}\norm{x_n}{E}{}>0.$ Clearly, $x_{n+1}\leq{x_n}$ for any $n\in\mathbb{N}$, whence we have $\norm{x_n}{E}{}\downarrow{d}.$ Define for any $n\in\mathbb{N }$,
	\begin{equation*}
	u_n=\frac{x_n}{\norm{x_n}{E}{}}.
	\end{equation*}
	It is obvious that $(u_n)\subset{S_{E^+}}$ and $u_n\rightarrow{0}$ a.e. Moreover, by symmetry of $E$, for any $k$-subsequences $(u_{n,{1}}),\cdots,(u_{n,{k}})$ of $(u_n)$ we have 
	\begin{align*}
	k\geq\norm{\sum_{i=1}^{k}u_{n,{i}}}{E}{}&\geq\min_{1\leq{i}\leq{k}}\{\norm{x_{n,{i}}}{E}{}\}\abs{\sum_{i=1}^{k}\frac{1}{\norm{x_{n,{i}}}{E}{}}}{}{}\\
	&\geq{d}\abs{\sum_{i=1}^{k}\frac{1}{\norm{x_{n,{i}}}{E}{}}}{}{}\geq\frac{kd}{\max_{1\leq{i}\leq{k}}\{\norm{x_{n,{i}}}{E}{}\}}
	\end{align*}
	for every $n\in\mathbb{N}$. Hence, since for all $n\in\mathbb{N}$,
	$$\sum_{i=1}^{k}u_{n,{i}}\prec\sum_{i=1}^{k}u_{n,{i}}^*$$
	and by assumption that $\norm{x_n}{E}{}\downarrow{d}$ and by symmetry of $E$ it follows that
	\begin{equation*}
	k\geq\norm{\sum_{i=1}^{k}u_{n,{i}}^*}{E}{}\geq\norm{\sum_{i=1}^{k}u_{n,{i}}}{E}{}\rightarrow{k}.
	\end{equation*}
	Thus, since $(u_n^*)\subset{S_{E}}$, by assumption that $E^d$ is compactly fully $k$-rotund we obtain $(u_n^*)$ forms a relatively compact set. Therefore, passing to subsequence and relabelling if necessary we may assume that there exists $u\in{S_E}$ such that
	\begin{equation}\label{equ:1:thm:FR*}
	\norm{u_n^*-u}{E}{}\rightarrow{0}\qquad\textnormal{and}\qquad{}u_n^*\rightarrow{u}\quad\textnormal{a.e.}
	\end{equation}
	On the other hand, since $\phi(\infty)=\infty$, by Remark \ref{rem:FP&phi} we get $x_1^*(\infty)=0$. So, by assumption that $x_n\downarrow{0}$ a.e., in view of Property 2.12 in \cite{KPS} this concludes that $x_n^*\rightarrow{0}$ a.e. Hence, by definition of $u_n$ for any $n\in\mathbb{N}$ we observe $u_n^*\rightarrow{0}$ a.e. Therefore, by \eqref{equ:1:thm:FR*} this provides $u=0$ a.e. Consequently, in view of the fact that $(u_n^*)\subset{S_E}$ and by \eqref{equ:1:thm:FR*} we get a contradiction which completes the proof.
\end{proof}

The next proposition follows directly from the well known result in \cite{CHK}, where there has been shown a complete correspondence between fully $k$-rotundity and compact fully $k$-rotundity as well as rotundity in a Banach lattice. Applying the same technique as in paper \cite{CHK} we may easily show that the below relationships are satisfied on the positive cone $E^d$ of a symmetric space $E$.

\begin{proposition}\label{prop:relationFR&CFR}
Let $E$ be a symmetric space. Then, $E^d$ is fully $k$-rotund if and only if $E^d$ is compactly fully $k$-rotund and rotund.
\end{proposition}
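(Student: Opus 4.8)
The plan is to transplant the proof of the corresponding equivalence for Banach lattices from \cite{CHK}, substituting the cone $E^d$ for the positive cone $E^+$ used there. The only features of $E^+$ that the argument of \cite{CHK} invokes are that it is a convex cone and that it is norm-closed, and both persist for $E^d$: a sum of nonnegative decreasing functions is nonnegative and decreasing, and if $z_n\to z$ in $E$ with each $z_n\in E^d$ then, after passing to a subsequence, $z_n\to z$ a.e., so $z$ is decreasing. Consequently each partial sum $\sum_{i=1}^{k}x_{n,i}$ of elements of $E^d$ again lies in $E^d$, and any norm limit of a sequence in $S_E\cap E^d$ again lies in $S_E\cap E^d$ — which is all that is needed to run the argument inside $E^d$. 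Here ``$E^d$ is rotund'' means, as usual, that $x=y$ whenever $x,y\in S_E\cap E^d$ and $\norm{x+y}{E}{}=2$.

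The implication ``$E^d$ fully $k$-rotund $\Rightarrow$ $E^d$ compactly fully $k$-rotund and rotund'' is the easy half. First, any $(x_n)\subset S_E\cap E^d$ fulfilling the defining condition of full $k$-rotundity is Cauchy, hence convergent by completeness of $E$, hence relatively compact; this gives compact full $k$-rotundity of $E^d$. Second, if $E^d$ were not rotund, pick $x\neq y$ in $S_E\cap E^d$ with $\norm{x+y}{E}{}=2$. A short convexity argument (using $\norm{\tfrac12(x+y)}{E}{}=1$) shows $\norm{sx+ty}{E}{}=s+t$ for all $s,t\geq0$, so the alternating sequence $z_{2j-1}=x$, $z_{2j}=y$, which lies in $S_E\cap E^d$, satisfies $\norm{\sum_{i=1}^{k}z_{n,i}}{E}{}=k$ for every choice of its $k$ subsequences (each such partial sum equalling $ax+by$ with $a+b=k$, hence of norm $k$), while $(z_n)$ is plainly not Cauchy — contradicting full $k$-rotundity of $E^d$.

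For the converse, suppose $E^d$ is compactly fully $k$-rotund and rotund, and let $(x_n)\subset S_E\cap E^d$ satisfy $\norm{\sum_{i=1}^{k}x_{n,i}}{E}{}\to k$ for all its $k$-subsequences. Compact full $k$-rotundity makes $\{x_n\}$ relatively compact, so it is enough to show that any two norm-subsequential limits agree; then $(x_n)$ converges, hence is Cauchy. Let $x_{n_j}\to u$ and $x_{m_j}\to v$, with $u,v\in S_E\cap E^d$ by the closedness noted above; refining, we may assume $n_1<m_1<n_2<m_2<\cdots$, so that the interleaving $z_{2j-1}=x_{n_j}$, $z_{2j}=x_{m_j}$ is an honest subsequence of $(x_n)$ and therefore still satisfies $\norm{\sum_{i=1}^{k}z_{j,i}}{E}{}\to k$ for all its $k$-subsequences. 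Taking one of the $k$ subsequences of $(z_j)$ along the odd indices (terms $\to u$) and the other $k-1$ along the even indices (terms $\to v$), the partial sum converges in norm to $u+(k-1)v$, whence $\norm{u+(k-1)v}{E}{}=k$. Combining $k=\norm{(u+v)+(k-2)v}{E}{}\leq\norm{u+v}{E}{}+(k-2)$ with $\norm{u+v}{E}{}\leq2$ gives $\norm{u+v}{E}{}=2$, and rotundity of $E^d$ forces $u=v$.

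The crux is this last paragraph: the hypothesis only delivers the scalar convergence $\norm{\sum_{i=1}^{k}z_{j,i}}{E}{}\to k$, and turning it into the full equality $\norm{u+v}{E}{}=2$ relies precisely on the fact that the defining condition quantifies over \emph{all} $k$-tuples of subsequences, one of which can be set to track $u$ while the remaining $k-1$ track $v$ (if disjoint subsequences are preferred, split the even indices of $(z_j)$ into $k-1$ infinite blocks). The remaining points — that the interleaved sequence is genuinely a subsequence of $(x_n)$, and that $S_E\cap E^d$ is norm-closed so that $u,v$ are legitimate points of $E^d$ — are routine and were recorded at the outset.
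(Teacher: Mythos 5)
Your proof is correct and follows exactly the route the paper intends: the paper itself gives no argument for this proposition beyond the remark that the technique of \cite{CHK} transfers to the cone $E^d$, and your write-up is precisely that transfer (using that $E^d$ is a norm-closed convex cone, the convexity identity $\Vert su+tv\Vert_E=s+t$ on a segment of the sphere for the rotundity half, and the interleaving-of-subsequences trick to force $\Vert u+v\Vert_E=2$ for the converse). No gaps; you have simply supplied the details the paper leaves to the reader.
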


\begin{proposition}\label{prop:relationLFR&CLFR}
	Let $E$ be a symmetric space. Then, $E^d$ is locally fully $k$-rotund if and only if $E^d$ is compactly locally fully $k$-rotund and rotund.
\end{proposition}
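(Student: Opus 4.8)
The plan is to prove both implications by transplanting, to the local cone‑restricted setting, the reasoning behind Proposition~\ref{prop:relationFR&CFR} and the technique of \cite{CHK}. Throughout, ``$E^d$ is locally fully $k$-rotund'' is understood as: every decreasing $x\in S_E\cap E^d$ is a point of local fully $k$-rotundity when the competing sequences range over $S_E\cap E^d$, with the analogous reading for the compact version; ``$E^d$ is rotund'' refers to decreasing unit‑norm elements. One implication is immediate, since norm convergence of the competing sequence to $x$ makes it relatively compact, so local full $k$-rotundity of $E^d$ trivially gives compact local full $k$-rotundity of $E^d$. Hence the real content splits into: (a) local full $k$-rotundity of $E^d$ forces rotundity of $E^d$; (b) compact local full $k$-rotundity together with rotundity of $E^d$ yields local full $k$-rotundity of $E^d$.

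For (a), fix decreasing $x,y\in S_E\cap E^d$ with $\norm{x+y}{E}{}=2$; I claim $x=y$. First note $\norm{x+ky}{E}{}=k+1$: indeed $\norm{x+ky}{E}{}\leq k+1$ by the triangle inequality, while from $k(x+y)=(x+ky)+(k-1)x$ we get $2k=\norm{k(x+y)}{E}{}\leq\norm{x+ky}{E}{}+(k-1)$, i.e. $\norm{x+ky}{E}{}\geq k+1$. Now apply the definition of a point of local fully $k$-rotundity at $x$ to the constant sequence $x_n=y$: every $k$-subsequence is again the constant sequence $y$, so $\norm{x+\sum_{i=1}^{k}x_{n,i}}{E}{}=\norm{x+ky}{E}{}\to k+1$, and therefore $x_n=y$ converges to $x$, that is, $x=y$. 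Thus $E^d$ is rotund.

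For (b), assume $E^d$ is compactly locally fully $k$-rotund and rotund, fix decreasing $x\in S_E\cap E^d$ and $(x_n)\subset S_E\cap E^d$ with $\norm{x+\sum_{i=1}^{k}x_{n,i}}{E}{}\to k+1$ for every choice of $k$-subsequences, and suppose, towards a contradiction, that $\norm{x_n-x}{E}{}\not\to0$. By compact local full $k$-rotundity $(x_n)$ is relatively compact, so there are $\varepsilon>0$, a subsequence with $\norm{x_{n_j}-x}{E}{}\geq\varepsilon$, and a further subsequence with $x_{n_{j_l}}\to z$ in norm, where $\norm{z}{E}{}=1$ and $\norm{z-x}{E}{}\geq\varepsilon$, so $z\neq x$. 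Since each $x_{n_{j_l}}$ is decreasing, passing to an a.e.\ convergent subsequence (or using Lemma~3.2 in \cite{KPS}) yields $z=z^*$, hence $z\in S_E\cap E^d$. Choosing all $k$ subsequences in the hypothesis equal to $(x_{n_{j_l}})_l$ gives $\sum_{i=1}^{k}x_{n,i}=k\,x_{n_{j_l}}\to kz$, whence $\norm{x+kz}{E}{}=\lim_l\norm{x+k\,x_{n_{j_l}}}{E}{}=k+1$; then from $x+kz=(x+z)+(k-1)z$ we get $\norm{x+z}{E}{}\geq2$, so $\norm{x+z}{E}{}=2$, and rotundity of $E^d$ forces $x=z$ — a contradiction. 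Hence $\norm{x_n-x}{E}{}\to0$, i.e. $E^d$ is locally fully $k$-rotund.

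The calculations above are routine triangle‑inequality estimates; the only place needing care is staying inside the cone $E^d$ — specifically, that the norm limit $z$ of the decreasing functions $x_{n_{j_l}}$ is again decreasing, so that the rotundity of $E^d$ (not of $E$) applies — and the harmless but necessary observation that the defining condition for a point of local fully $k$-rotundity may be invoked with all $k$ subsequences taken to be one and the same subsequence. As with Proposition~\ref{prop:relationFR&CFR}, there is no genuine obstacle beyond carrying every step out for competing sequences constrained to $S_E\cap E^d$, which is why the statement indeed follows from the method of \cite{CHK}.
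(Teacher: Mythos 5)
Your argument is correct, and it is precisely the Fan--Glicksberg/\cite{CHK}-style reasoning that the paper invokes: the paper gives no written proof of this proposition, merely asserting that the technique of \cite{CHK} carries over to the cone $E^d$, and your write-up supplies exactly those details (the constant-sequence trick with $\norm{x+ky}{E}{}=k+1$ for rotundity, and relative compactness plus rotundity of $E^d$ to identify the limit, including the necessary check via Lemma 3.2 of \cite{KPS} that the limit stays in the cone). No gaps.
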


Recently, in paper \cite{CHK} authors have proved that if a Banach space $X$ is compactly fully $k$-rotund then $X$ is reflexive. We show that in a symmetric space $E$ it is enough to assume weaker condition to get reflexivity of $E$. For the sake of completeness and reader's convenience we present all details of the proof of the following theorem. In some parts of the proof we use similar technique to the proof of Proposition 1 in \cite{CHK}.

\begin{theorem}\label{thm:CFR=>reflexive}
Let $E$ be a symmetric space. If $E^d$ is compactly fully $k$-rotund, then $E$ is reflexive.
\end{theorem}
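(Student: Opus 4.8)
The plan is to show reflexivity of $E$ by verifying that both $E$ and its associate space $E'$ are order continuous, since a symmetric space with the Fatou property is reflexive precisely when $E \in (OC)$ and $E' \in (OC)$. Order continuity of $E$ is already in hand: it follows immediately from Proposition \ref{prop:1:CFR*=>OC}, since $E^d$ being compactly fully $k$-rotund forces $E \in (OC)$ and in particular $\phi(\infty)=\infty$. So the real work is to prove that $E'$ is order continuous, equivalently that $E$ contains no order-isomorphic, order-linearly-homeomorphic copy of $\ell^\infty$ (by the classical characterization of reflexivity of Banach lattices, or the Lozanovskii-type criterion: $E$ reflexive iff $E$ and $E'$ are both order continuous iff $E$ does not contain $\ell^\infty$ nor $c_0$ in an appropriate sense). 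Since $E$ is already order continuous, it suffices to rule out an isomorphic copy of $c_0$ sitting on pairwise disjoint elements, because an order continuous Banach function space fails to be reflexive exactly when it contains such a copy of $c_0$ (equivalently its associate space has a copy of $\ell^1$ supported on disjoint atoms).

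The heart of the argument, following the technique of Proposition 1 in \cite{CHK}, is to assume for contradiction that $E$ is not reflexive and hence there is a sequence $(e_n)$ of pairwise disjoint, normalized elements of $E$ spanning a subspace isomorphic to $c_0$, so that $\|\sum_{n\in F} \pm e_n\|_E$ is bounded above uniformly over finite $F$ and all sign choices, while being bounded below. The strategy is then to build from these disjoint blocks a sequence $(v_n) \subset S_{E^d}$ of normalized decreasing rearrangements for which every choice of $k$ subsequences satisfies $\|\sum_{i=1}^k v_{n,i}\|_E \to k$, yet $(v_n)$ is not relatively compact — contradicting that $E^d$ is $CFkR$. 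Concretely, I would take partial sums $s_m = \sum_{n=1}^m e_n$ (or suitably weighted partial averages), normalize to get $v_m = s_m/\|s_m\|_E$; using disjointness together with symmetry one computes that the decreasing rearrangement $v_m^*$ behaves like a "staircase", and because the $c_0$-structure keeps $\|s_m\|_E$ comparable to a constant $d>0$ while $\|s_m - s_{m'}\|_E \geq d$ for $m \neq m'$, the normalized rearrangements stay a fixed distance apart, so $(v_m^*)$ has no Cauchy subsequence. The norm-of-sum condition $\|\sum_{i=1}^k v_{n,i}^*\|_E \to k$ is then obtained exactly as in the proofs of Proposition \ref{prop:1:CFR*=>OC} and Theorem \ref{thm:CLFkR*=>ULUKM}: one sandwiches $\|\sum_i v_{n,i}^*\|_E$ between $\|\sum_i v_{n,i}\|_E \leq k$ (triangle inequality) from above, and a lower bound of the form $\min_i\{\|s_{n,i}\|_E\}\cdot k / (\text{const})$ coming from monotonicity of the norm and the near-constancy of $\|s_m\|_E$, forcing the limit to be $k$.

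I expect the main obstacle to be the passage from "$E$ not reflexive" to a concrete disjointly supported sequence with a clean $c_0$- (or $\ell^1$-in-the-associate) structure that one can push through the rearrangement operation while keeping the norms controlled from both sides. In a non-atomic symmetric space one cannot literally invoke block-basis arguments for $c_0$ the way one does in sequence spaces, so one must instead work through the associate space: non-reflexivity of the order-continuous space $E$ gives that $E'$ is not order continuous, hence there is $g \in E'$ and disjoint $(g_n)$ with $g_n \leq g$, $g_n \to 0$ a.e., but $\|g_n\|_{E'} \geq \delta > 0$; dualizing (via Hölder and the norm formula $\|x\|_E = \sup\{\int xh : \|h\|_{E'}\leq 1\}$, valid under the Fatou property) produces the desired sequence in $E$. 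Managing this duality carefully — ensuring the constructed elements in $E$ are genuinely bounded in norm over all partial sums and all signs, and that their rearrangements separate — is the delicate point; once that sequence is in place, the remaining computations are the same routine triangle-inequality and symmetry manipulations already used twice in the preceding sections, and the contradiction with $CFkR$ of $E^d$ closes the argument.
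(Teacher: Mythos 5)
Your reduction to order continuity of $E$ and $E'$ is the right target, and the first half (order continuity of $E$ via Proposition \ref{prop:1:CFR*=>OC}) is fine. The second half, however, rests on a false dichotomy. Under the standing Fatou property, once $E$ is known to be order continuous it cannot contain a normalized pairwise disjoint sequence spanning $c_0$ at all: if $(e_n)\subset E^+$ are disjoint, normalized, and $\sup_m\|\sum_{n=1}^m e_n\|_E<\infty$, the Fatou property puts $x=\sum_n e_n$ in $E$, the $e_n$ are order bounded by $x$ and tend to $0$ a.e.\ by disjointness, so order continuity forces $\|e_n\|_E\to 0$, a contradiction. Thus the obstruction to reflexivity you must rule out is a disjoint $\ell^1$-copy, not a $c_0$-copy --- exactly the situation of $L^1$, which is order continuous, non-reflexive, and contains no $c_0$. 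For the disjoint sequence obtained by dualizing a non-order-continuous $E'$ (take $g_n\le g$ disjoint with $\|g_n\|_{E'}\ge\delta$ and $x_n\in B_E$ supported in $\supp g_n$ with $\int x_n g_n\ge\delta/2$), the partial sums satisfy $\|s_m\|_E\ge m\delta/(2\|g\|_{E'})$, so your key claim that $\|s_m\|_E$ stays comparable to a constant $d>0$ fails, and with it the sandwich argument for $\|\sum_{i=1}^k v_{n,i}^*\|_E\to k$: unlike in Proposition \ref{prop:1:CFR*=>OC} and Theorem \ref{thm:CLFkR*=>ULUKM}, where the sequences are monotone or dominated by a fixed element, nothing forces the $k$-fold sums of normalized partial sums of a disjoint $\ell^1$-sequence to have norm tending to $k$. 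A further unresolved point is non-compactness after rearrangement: disjoint elements can have identical decreasing rearrangements (e.g.\ $\chi_{[n,n+1)}$), so separation of the $v_m$ does not yield separation of the $v_m^*$.

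The paper sidesteps all of this with James' theorem: it suffices to show that every $f\in S_{E^*}$ attains its norm. Taking $(x_n)\subset S_E$ with $f(x_n)\to 1$, order continuity gives $E^*=E'$; a preliminary claim ($\lim_{t\to\infty}t/\phi(t)=\infty$, itself proved by feeding $\frac{1}{dn}\chi_{[0,n)}$ to the $CFkR$ hypothesis) gives $f^*(\infty)=0$, so Ryff's theorem supplies a measure preserving $\sigma$ with $f^*\circ\sigma=|f|$, and transporting $x_n^*$ back by $\sigma$ yields $y_n$ with $f(y_n)\to 1$. Then $\|\sum_{i=1}^k x_{n,i}^*\|_E\to k$ comes for free from $f(\sum_i y_{n,i})\to k$, $\|f\|_{E^*}=1$ and the Hardy--Littlewood inequality, so compact full $k$-rotundity of $E^d$ produces a limit $x^*$ whose transport attains the norm of $f$. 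If you want to keep a direct ``order continuity of $E'$'' route you would need a genuinely different mechanism for converting an $\ell^1$-type disjoint sequence into a sequence satisfying the $CFkR$ hypothesis; the $c_0$-based construction you describe cannot be repaired.
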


\begin{proof}
Let $\phi$ be the fundamental function of a symmetric space $E$. First, we claim 
\begin{equation}\label{equ:claim}
\lim_{t\rightarrow\infty}\frac{t}{\phi(t)}=\infty.
\end{equation}	
Let us suppose that it is not true. Then, by monotonicity of the map $\phi(t)/t$ we get
\begin{equation*}
\lim_{t\rightarrow\infty}\frac{\phi(t)}{t}=d>0.
\end{equation*}
Define for any $n\in\mathbb{N}$ and $t>0$,
\begin{equation*}
x_n(t)=\frac{1}{dn}\chi_{[0,n)}(t).
\end{equation*}
Clearly, $x_{n+1}\prec{x_n}=x_n^*$ for any $n\in\mathbb{N}$ and $\norm{x_n}{E}{}\downarrow{1}$. Next,  for any $k$-subsequences $(x_{n,1}),(x_{n,2}),\cdots,(x_{n,k})$ of $(x_n)$, by symmetry of $E$ we have
\begin{equation*}
k\min_{1\leq{i}\leq{k}}\{\norm{x_{n,i}}{E}{}\}\leq\norm{\sum_{i=1}^{k}x_{n,i}}{E}{}\leq{k}\max_{1\leq{i}\leq{k}}\{\norm{x_{n,i}}{E}{}\},
\end{equation*}
for all $n\in\mathbb{N}$, whence
\begin{equation*}
\norm{\sum_{i=1}^{k}x_{n,i}}{E}{}\rightarrow{k}.
\end{equation*}
Therefore, by Theorem \ref{thm:remark:1} and by assumption that $E^d$ is compactly fully $k$-rotund, passing to subsequence and relabeling if necessary we may assume that there exists $x\in{S_E}$ such that 
\begin{equation}\label{equ:norm:conv}
\norm{x_n-x}{E}{}\rightarrow{0}\quad\textnormal{and}\quad{x_n\rightarrow}x\quad\textnormal{a.e.}
\end{equation} 
On the other hand, by construction of $x_n$ for any $n\in\mathbb{N}$ it is easy to observe that $x_n\rightarrow{0}$ a.e. Hence, $x=0$ a.e. and consequently in view of the fact \eqref{equ:norm:conv} it follows that 
\begin{equation*}
\frac{\phi(n)}{dn}=\norm{x_n}{E}{}\rightarrow{0}.
\end{equation*} 
Therefore, since $\norm{x_n}{E}{}\downarrow{1}$ we get a contradiction which proves our claim \eqref{equ:claim}.\\	
Let $f\in{S_{E^*}}$ be an element of dual space of $E$. Then, there exists a sequence $(x_n)\subset{S_E}$ such that $f(x_n)\rightarrow\norm{f}{E^*}{}=1$. By assumption that $E^d$ is compactly fully $k$-rotund and by Proposition \ref{prop:1:CFR*=>OC} it follows that $E$ is order continuous and also $\phi(\infty)=\infty$. Consequently, by Theorem 4.1 in \cite{BS} this yields that the dual space $E^*$ and the associate space $E'$ of a symmetric space $E$ coincide. Hence, by the Corollary 4.4 in \cite{BS} we have
\begin{equation}\label{equ:1:thm:reflex}
\norm{f}{E^*}{}=\norm{f}{E'}{}=\sup\left\{\int_{0}^{\infty}f^*(t)x^*(t)dt:\hspace{0.05in}\norm{x}{E}{}\leq{1}\right\}.
\end{equation}
Let $\psi$ be the fundamental function of $E'$. By \eqref{equ:claim} and by Theorem 5.2 in \cite{BS} we get $\psi(\infty)=\infty$, whence by Remark \ref{rem:FP&phi} we conclude that $f^*(\infty)=0$. In consequence, by Ryff's theorem in \cite{BS} there exists a measure preserving transformation $\sigma:\supp(f)\rightarrow\supp(f^*)$ such that $f^*\circ\sigma=|f|$ a.e. on $\supp(f)$. First, if $\mu(\supp(f))<\infty$ we may consider $\sigma:I\rightarrow{I}$ (see \cite{Royd}). Moreover, without loss of generality we may assume that  $\supp(x_n)\subset\supp(f)$ for any $n\in\mathbb{N}$. Next, by \eqref{equ:1:thm:reflex} and by assumption that $f(x_n)\rightarrow{1}$, in view of the Hardy-Littlewood inequality in \cite{BS} we obtain
\begin{equation*}
1=\lim_{n\rightarrow\infty}f(x_n)=\lim_{n\rightarrow\infty}\int_{0}^{\infty}f(t)x_n(t)dt=\lim_{n\rightarrow\infty}\int_{0}^{\infty}f^*(t)x_n^*(t)dt.
\end{equation*}
Define for any $n\in\mathbb{N}$,
\begin{equation*}
y_n=\sg(f)x_n^*\circ\sigma\chi_{\sigma^{-1}[I]}.
\end{equation*} 
Then, we have 
\begin{align}\label{equ:conver:to:1}
1=\lim_{n\rightarrow\infty}\int_{0}^{\infty}f^*(t)x_n^*(t)dt&=\lim_{n\rightarrow\infty}\int_{\sigma^{-1}[I]}f^*\circ\sigma(t)x_n^*\circ\sigma(t)dt\\
&=\lim_{n\rightarrow\infty}\int_{0}^{\infty}f(t)\sg(f(t))x_n^*\circ\sigma(t)\chi_{\sigma^{-1}[I]}(t)dt\nonumber\\
&=\lim_{n\rightarrow\infty}f(y_n).\nonumber
\end{align}
Hence, taking any $k$-subsequences $(y_{n,{1}}),\cdots,(y_{n,{k}})$ of $(y_n)$ we conclude
\begin{equation*}
f\left(\sum_{i=1}^{k}y_{n,{i}}\right)\rightarrow{k}.
\end{equation*}
Next, by subadditivity of the maximal function we observe  for any $n\in\mathbb{N}$,
\begin{equation*}
\sum_{i=1}^{k}y_{n,{i}}\prec\sum_{i=1}^{k}y_{n,{i}}^*.
\end{equation*}
In consequence, since $y_n^*=x_n^*$ a.e. for all $n\in\mathbb{N}$ and $f\in{S_{E^*}}$, by symmetry and by the triangle inequality of the norm in $E$ we get
\begin{equation*}
\norm{\sum_{i=1}^{k}x_{n,{i}}^*}{E}{}\rightarrow{k}.
\end{equation*}
Therefore, by assumption that $E^d$ is compactly fully $k$-rotund, passing to subsequence and relabelling if necessary we may suppose that $x_n^*$ converges to $x\in{E}$ in norm of $E$ and also a.e. So, $\norm{x}{E}{}=1$ and by Lemma 3.2 in \cite{KPS} we get $x^*=x$ a.e. Hence, since $\supp(x_n^*)\subset\supp(f^*)$ for any $n\in\mathbb{N}$,  without loss of generality we may assume that $\supp(x^*)\subset\supp(f^*)$. Define $y=\sg(f)x^*\circ\sigma\chi_{\sigma^{-1}[I]}$. Then,  it is easy to see that $y^*=x^*$ a.e.  and $y\in{S_E}$. Moreover, by \eqref{equ:conver:to:1} and by continuity of $f$ we have
\begin{align*}
f(y)=\int_{0}^{\infty}f(t)y(t)dt&=\int_{0}^{\infty}f(t)\sg(f(t))x^*\circ\sigma(t)\chi_{\sigma^{-1}[I]}(t)dt\\
&=\lim_{n\rightarrow\infty}\int_{0}^{\infty}f(t)\sg(f(t))x_n^*\circ\sigma(t)\chi_{\sigma^{-1}[I]}(t)dt\\
&=\lim_{n\rightarrow\infty}f(y_n)=1,
\end{align*}
 which finishes the proof.
\end{proof}

In paper \cite{CerHudzKamMas}, authors have showed among others a relationship between the facts $E^d\in(FR)$ and $E\in(FR)$ in symmetric spaces under the additional assumption that there exists an equivalent symmetric uniformly rotund norm. In the spirit of the previous result we investigate a correlation between $E^d\in(CFkR)$ and $E\in(CFkR)$ in symmetric spaces. For the sake of completeness and reader's convenience we present the proof of the following theorem even though it is similar in some parts to the proof of Theorem 2 in \cite{CerHudzKamMas}.

\begin{theorem}
Let $(E,\norm{\cdot}{E}{})$ be a symmetric space. If $E^d$ is compactly fully $k$-rotund and locally uniformly rotund and also $E$ has an equivalent symmetric norm $\norm{\cdot}{o}{}$ that is compactly fully $k$-rotund, then $E$ is compactly fully $k$-rotund.
\end{theorem}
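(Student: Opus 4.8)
The plan is to let the decreasing cone handle the rearrangements and let the auxiliary norm $\norm{\cdot}{o}{}$ finish the job, with the local uniform rotundity of $E^d$ serving as the bridge between the two. First I would note that, $E^d$ being $LUR$, it is rotund (take a constant sequence in the definition of an $LUR$ point), so by Proposition \ref{prop:relationFR&CFR} the cone $E^d$ is in fact $FkR$. Fix a sequence $(x_n)\subset{S_E}$ with $\norm{\sum_{i=1}^{k}x_{n,i}}{E}{}\rightarrow{k}$ for all its $k$-subsequences; since two equivalent norms have the same relatively compact sets, it suffices to show $(x_n)$ forms a relatively compact set in $(E,\norm{\cdot}{o}{})$.

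By subadditivity of the maximal function, $\frac{1}{k}\sum_{i=1}^{k}x_{n,i}\prec\frac{1}{k}\sum_{i=1}^{k}x_{n,i}^{*}$, so by $K$-monotonicity of $E$ and the triangle inequality (recall $\norm{x_{n,i}^{*}}{E}{}=1$) one gets $\norm{\sum_{i=1}^{k}x_{n,i}^{*}}{E}{}\rightarrow{k}$ for every $k$-subsequence of $(x_n^{*})$. Since $(x_n^{*})\subset{S_E}\cap{E^d}$ and $E^d$ is $FkR$, the sequence $(x_n^{*})$ is Cauchy, hence $x_n^{*}\rightarrow{y}$ in $\norm{\cdot}{E}{}$, and by Lemma 3.2 in \cite{KPS} we may take $y=y^{*}\in{S_E}$. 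By equivalence of the norms, $\norm{x_n}{o}{}=\norm{x_n^{*}}{o}{}\rightarrow\norm{y}{o}{}=:a>0$, and also $\tilde{w}_n:=\frac{1}{k}\sum_{i=1}^{k}x_{n,i}^{*}\rightarrow{y}$ in $\norm{\cdot}{E}{}$.

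The heart of the proof is to transfer this convergence to the rearrangement of $w_n:=\frac{1}{k}\sum_{i=1}^{k}x_{n,i}$. From $w_n\prec\tilde{w}_n$ we get $w_n^{*}\prec\tilde{w}_n$, and since $w_n^{*}$ and $\tilde{w}_n$ are nonnegative and decreasing,
\begin{equation*}
w_n^{*}\prec\tfrac{1}{2}\left(w_n^{*}+\tilde{w}_n\right)\prec\tilde{w}_n,
\end{equation*}
so by $K$-monotonicity $\norm{w_n^{*}}{E}{}\leq\norm{\tfrac{1}{2}(w_n^{*}+\tilde{w}_n)}{E}{}\leq\norm{\tilde{w}_n}{E}{}$. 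As $\norm{w_n^{*}}{E}{}=\norm{w_n}{E}{}=\frac{1}{k}\norm{\sum_{i=1}^{k}x_{n,i}}{E}{}\rightarrow{1}=\norm{y}{E}{}$ and $\norm{\tilde{w}_n}{E}{}\rightarrow\norm{y}{E}{}$, a squeeze gives $\norm{w_n^{*}+\tilde{w}_n}{E}{}\rightarrow{2}\norm{y}{E}{}$, whence $\norm{w_n^{*}+y}{E}{}\rightarrow{2}\norm{y}{E}{}$ since $\tilde{w}_n\rightarrow{y}$. Because $(w_n^{*})\subset{E^d}$, $\norm{w_n^{*}}{E}{}\rightarrow\norm{y}{E}{}$, and $y$ is an $LUR$ point of $E^d$, we conclude $\norm{w_n^{*}-y}{E}{}\rightarrow{0}$, so by equivalence of the norms and symmetry of $\norm{\cdot}{o}{}$,
\begin{equation*}
\norm{\sum_{i=1}^{k}x_{n,i}}{o}{}=k\norm{w_n^{*}}{o}{}\rightarrow{k}\norm{y}{o}{}=ka.
\end{equation*}

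Finally I would set $u_n:=x_n/a$, so that $\norm{u_n}{o}{}\rightarrow{1}$ and, for any $k$-subsequences $(u_{n,1}),\dots,(u_{n,k})$ of $(u_n)$, $\norm{\sum_{i=1}^{k}u_{n,i}}{o}{}=\frac{1}{a}\norm{\sum_{i=1}^{k}x_{n,i}}{o}{}\rightarrow{k}$. Since $(E,\norm{\cdot}{o}{})$ is compactly fully $k$-rotund, Theorem \ref{thm:remark:1} yields that $(u_n)$, and hence $(x_n)=(a u_n)$, forms a relatively compact set in $(E,\norm{\cdot}{o}{})$, and therefore in $(E,\norm{\cdot}{E}{})$; thus $E$ is compactly fully $k$-rotund. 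The step I expect to be the main obstacle is the one in the third paragraph: the subadditivity estimates only bound $w_n^{*}$ from above by $\tilde{w}_n$, and it is exactly the local uniform rotundity of $E^d$, used through the sandwich $w_n^{*}\prec\frac{1}{2}(w_n^{*}+\tilde{w}_n)\prec\tilde{w}_n$, that supplies the matching lower estimate and forces $w_n^{*}\rightarrow y$.
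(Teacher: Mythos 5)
Your proof is correct and follows essentially the same route as the paper's: the midpoint sandwich $\sum_{i=1}^{k}x_{n,i}\prec\tfrac{1}{2}\bigl(\bigl(\sum_{i=1}^{k}x_{n,i}\bigr)^{*}+\sum_{i=1}^{k}x_{n,i}^{*}\bigr)\prec\sum_{i=1}^{k}x_{n,i}^{*}$ combined with local uniform rotundity on $E^d$ to force $\bigl(\sum_{i=1}^{k}x_{n,i}/k\bigr)^{*}$ to converge, followed by an appeal to compact full $k$-rotundity of the equivalent symmetric norm. The only (harmless) difference is your initial upgrade of $E^d$ from $CFkR$ to $FkR$ via rotundity, which spares you the intermediate subsequence extraction that the paper performs.
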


\begin{proof}
Let $(x_n)\subset{S_E}$ be such that for any $k$-subsequences $(x_{n,1}),\cdots,(x_{n,k})$ of $(x_n)$ we have $\norm{\sum_{i=1}^{k}x_{n,i}}{E}{}\rightarrow{k}.$
Then, since 
\begin{equation}\label{equ:0:CHKM}
\sum_{i=1}^{k}x_{n,i}\prec\frac{\left(\sum_{i=1}^{k}x_{n,i}\right)^*}{2}+\frac{\sum_{i=1}^{k}x_{n,i}^*}{2}\prec\sum_{i=1}^{k}x_{n,i}^*
\end{equation}
for any $k\in\mathbb{N}$, by symmetry and by the triangle inequality of the norm in $E$ we obtain $$\norm{\sum_{i=1}^{k}x_{n,i}^*}{E}{}\rightarrow{k}.$$ Hence, by assumption that $E^d$ is compactly fully $k$-rotund, passing to subsequence and relabelling if necessary we may assume that there exists $x\in{S_E}$ such that
\begin{equation}\label{equ:1:CHKM}
\norm{x_{n}^*-x}{E}{}\rightarrow{0}.
\end{equation}
Thus, by Lemma 3.2 in \cite{KPS} we obtain $x=x^*$ a.e. Next, by \eqref{equ:0:CHKM} and \eqref{equ:1:CHKM} we get 
\begin{equation*}
\norm{\left(\sum_{i=1}^{k}\frac{x_{n,i}}{k}\right)^*+x^*}{E}{}\rightarrow{2}.
\end{equation*}
In consequence, by assumption that $E^d$ is $LUR$ and $E$ has the equivalent symmetric norm $\norm{\cdot}{o}{}$ we conclude 
\begin{equation*}
\norm{\sum_{i=1}^{k}\frac{x_{n,i}}{\norm{x}{o}{}}}{o}{}\rightarrow{k}.
\end{equation*}
Finally, by assumption that $\norm{\cdot}{o}{}$ is compactly fully $k$-rotund, passing to subsequence and relabelling if necessary we may assume that $(x_n)$ is a Cauchy sequence in $E$. 
\end{proof}

\begin{remark}
Let us notice that local uniform rotundity does not imply compact fully $k$-rotundity in symmetric spaces in general. Consider a sequence symmetric space $E=l_{1}$ with an equivalent norm $\norm{\cdot}{E}{}$ given by
\begin{equation*}
\norm{x}{E}{}=\left(\norm{x}{1}{2}+\norm{x}{2}{2}\right)^{1/2}
\end{equation*}
for any $x\in{E}$. By Example 5.3.6 in \cite{Meggin}, it is well known that $E$ is locally uniformly rotund and also $E$ is not reflexive. Next, since the proof of Theorem \ref{thm:CFR=>reflexive} for the sequence case is analogous, it is easy to see that $E^d$ is not compactly fully $k$-rotund.
\end{remark}

\begin{theorem}\label{thm:CFkR*=>DUKM}
Let $E$ be a symmetric space. If $E^d$ is compactly fully $k$-rotund and strictly $K$-monotone, then $E$ is decreasing uniformly $K$-monotone.
\end{theorem}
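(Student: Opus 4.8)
The plan is to combine the two preceding theorems of Section 5 with Theorem \ref{thm:DUKM<=>KOC&ULUKM}. By Theorem \ref{thm:CFkR*=>DUKM}'s hypotheses, $E^d$ is compactly fully $k$-rotund, so by Proposition \ref{prop:1:CFR*=>OC} we get that $E$ is order continuous and in particular $\phi(\infty)=\infty$ (via Remark \ref{rem:FP&phi}); this already discharges the ``fundamental function'' side-condition in Theorem \ref{thm:DUKM<=>KOC&ULUKM}. Thus it suffices to verify that $E$ is $K$-order continuous and upper locally uniformly $K$-monotone, since then Theorem \ref{thm:DUKM<=>KOC&ULUKM}, equivalence $(ii)\Rightarrow(i)$, yields decreasing uniform $K$-monotonicity.

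First I would observe that a point of (global) compact fully $k$-rotundity on $E^d$ is in particular a point of compact \emph{local} fully $k$-rotundity on $E^d$ in the sense needed by Theorem \ref{thm:CLFkR*=>ULUKM}; concretely, if $E^d\in(CFkR)$ then $E^d$ is compactly locally fully $k$-rotund, because given $x\in S_{E^d}$ and a sequence $(x_n)\subset S_{E^d}$ with $\norm{x+\sum_{i=1}^k x_{n,i}}{E}{}\rightarrow k+1$, one may append $x$ to the sequence and invoke the global $(CFkR)$ property (as in Theorem \ref{thm:remark:1}). Hence, by the present hypotheses and Theorem \ref{thm:CLFkR*=>ULUKM}, the space $E$ is upper locally uniformly $K$-monotone. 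This takes care of the $ULUKM$ half.

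For the $K$-order continuity half I would argue as follows. In case $I=[0,1)$, order continuity of $E$ together with strict $K$-monotonicity already forces $K$-order continuity by Corollary 2 in \cite{Cies-SKM&KOC}. In case $I=[0,\infty)$, I would invoke Theorem \ref{thm:KOC}: since $E$ is order continuous and $\phi(\infty)=\infty$, it remains only to rule out the embedding $E\hookrightarrow L^1[0,\infty)$, and then $(i)\Leftrightarrow(iii)$ of that theorem gives $E\in(KOC)$. To exclude the embedding, I would reuse the contradiction device from the proof of Theorem \ref{thm:DUKM<=>KOC&ULUKM}: if $E\overset{C}{\hookrightarrow}L^1[0,\infty)$ then $\lim_{t\to\infty}\phi(t)/t=d>0$, and the normalized characteristic functions $v_n=\frac{1}{n}\chi_{[0,n)}/\norm{\cdot}{E}{}$ form a decreasing-in-$\prec$ sequence on $E^d$ with $\norm{\sum_{i=1}^k v_{n,i}}{E}{}\to k$ whose only a.e.\ limit is $0$, contradicting relative compactness forced by $E^d\in(CFkR)$ — this is exactly the mechanism already used in Proposition \ref{prop:1:CFR*=>OC}.

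The main obstacle I anticipate is purely bookkeeping: making sure that the ``local'' compact fully $k$-rotundity needed to feed Theorem \ref{thm:CLFkR*=>ULUKM} genuinely follows from the ``global'' hypothesis $E^d\in(CFkR)$ on the cone (the analogue of Theorem \ref{thm:remark:1} restricted to $E^d$), and that the $I=[0,1)$ and $I=[0,\infty)$ cases are handled by the correct cited corollaries. No new estimate is required — every quantitative step is a verbatim reuse of the arguments in Proposition \ref{prop:1:CFR*=>OC}, Theorem \ref{thm:CLFkR*=>ULUKM}, and Theorem \ref{thm:DUKM<=>KOC&ULUKM} — so once the implications are chained correctly the proof is short.
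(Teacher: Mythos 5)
Your proposal is correct, but it follows a genuinely different route from the paper's proof. The paper argues directly at the sequence level: after the normalization provided by Proposition 4.3 in \cite{CieLewUKM} it takes $x_{n+1}\prec x_n\prec y_n$ with $\norm{x_n}{E}{}\rightarrow 1$ and $\norm{y_n}{E}{}\rightarrow 1$, verifies that $(x_n^*)$ and $(y_n^*)$ both satisfy the compact fully $k$-rotundity hypothesis on $E^d$, extracts norm-convergent subsequences, identifies the two limits via strict $K$-monotonicity and the pointwise convergence of the maximal functions, and finally upgrades subsequential convergence to convergence of the full sequences by combining Theorem \ref{thm:CLFkR*=>ULUKM} (exactly as you do) with the double extract sequence theorem; $K$-order continuity never enters. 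You instead reduce the statement to the implication $(ii)\Rightarrow(i)$ of Theorem \ref{thm:DUKM<=>KOC&ULUKM} and supply its inputs separately: order continuity and $\phi(\infty)=\infty$ from Proposition \ref{prop:1:CFR*=>OC} and Remark \ref{rem:FP&phi}; upper local uniform $K$-monotonicity from Theorem \ref{thm:CLFkR*=>ULUKM}, after the observation (correct, and also invoked in the paper's own proof) that $E^d\in(CFkR)$ implies $E^d\in(CLFkR)$; and $K$-order continuity from Corollary 2 in \cite{Cies-SKM&KOC} when $I=[0,1)$ and from Theorem \ref{thm:KOC} when $I=[0,\infty)$, after excluding $E\hookrightarrow L^1[0,\infty)$ by the same compactness device that establishes the claim $\lim_{t\rightarrow\infty}t/\phi(t)=\infty$ in the proof of Theorem \ref{thm:CFR=>reflexive} (you could simply cite that claim rather than rerun the argument). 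Both proofs hinge on Theorem \ref{thm:CLFkR*=>ULUKM}; your version is shorter and more modular, making the dependence on the Section 3 characterization explicit at the price of the extra non-embedding step, while the paper's direct argument stays self-contained at the level of the defining sequences and avoids $K$-order continuity altogether.
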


\begin{proof}
First, by Proposition 4.3 in \cite{CieLewUKM}, we may assume that $(x_n),(y_n)\subset{E}$, $x_{n+1}\prec{}x_n\prec{y_n}$ for every $n\in\mathbb{N}$ and 
\begin{equation}\label{equ:1:thm:FR*=>DIUKM}
\norm{x_n}{E}{}\rightarrow{1}\quad\textnormal{and}\quad\norm{y_n}{E}{}\rightarrow{1}.
\end{equation}
Moreover, for any $k$-subsequences $({x_{n,{1}}}),({x_{n,{2}}}),\cdots,({x_{n,{k}}})$ of $(x_n)$ and for corresponding $k$-subsequences $({y_{n,{1}}}),\cdots,({y_{n,{k}}})$ of $(y_n)$ we have
\begin{equation*}
\sum_{i=1}^{k}{x_{n,{i}}^*}\prec{}\sum_{i=1}^{k}{y_{n,{i}}^*}
\end{equation*}
for any $n\in\mathbb{N}$. Therefore, since $x_{n+1}\prec{x_n}$ for all $n\in\mathbb{N}$, by symmetry of $E$ and by the triangle inequality of the norm in $E$ we get
\begin{align*}
k\min_{1\leq{}i\leq{k}}\left\{\norm{x_{n,{i}}}{E}{}\right\}\leq\norm{\sum_{i=1}^{k}{x_{n,{i}}^*}}{E}{}\leq\norm{\sum_{i=1}^{k}{y_{n,{i}}^*}}{E}{}\leq{k}\max_{1\leq{}i\leq{k}}\left\{\norm{y_{n,{i}}}{E}{}\right\}
\end{align*}
for any $n\in\mathbb{N}$. Thus, by \eqref{equ:1:thm:FR*=>DIUKM} we obtain
\begin{equation*}
\norm{\sum_{i=1}^{k}{x_{n,{i}}^*}}{E}{}\rightarrow{k}\quad\textnormal{and}\quad\norm{\sum_{i=1}^{k}{y_{n,{i}}^*}}{E}{}\rightarrow{k}.
\end{equation*}
In consequence, by assumption that $E^d$ is compactly fully $k$-rotund, there exist some subsequences $(x_{n_j})$ of $(x_n)$ and $(y_{n_j})$ of $(y_n)$ as well as $x,y\in{E}$ such that $x_{n_j}\prec{y_{n_j}}$ for all $j\in\mathbb{N}$ and
\begin{equation}\label{equ:2:thmFR*=>DIUKM}
\norm{x_{n_j}^*-x}{E}{}\rightarrow{0}\quad\textnormal{and}\quad\norm{y_{n_j}^*-y}{E}{}\rightarrow{0}.
\end{equation}
Hence, by \eqref{equ:1:thm:FR*=>DIUKM} we conclude $x,y\in{S_E}$ and also by Lemma 3.2 in \cite{KPS} it follows that $x=x^*$ and $y=y^*$ a.e. Therefore, in view of Proposition 5.9 in \cite{BS} we have
\begin{equation}\label{equ:4:thm:FR*=>DIUKM}
x_{n_j}^{**}(t)\rightarrow{x}^{**}(t)\quad\textnormal{and}\quad{y_{n_j}^{**}(t)}\rightarrow{y^{**}(t)}
\end{equation}  
for any $t>0$. Then, since $x_{n_j}\prec{y_{n_j}}$ for all $j\in\mathbb{N}$, this yields that $x\prec{y}$. Thus, since $x,y\in{S_E}$, in view of assumption that $E$ is strictly $K$-monotone we obtain $x=y$ a.e. Furthermore, by \eqref{equ:4:thm:FR*=>DIUKM} and by assumption that $x_{n+1}\prec{x_n}\prec{y_n}$ for any $n\in\mathbb{N}$ we get 
\begin{equation}\label{equ:3:thm:FR*=>DIUKM}
y\prec{y_n}
\end{equation} 
for all $n\in\mathbb{N}$. Next, since compact fully $k$-rotundity implies compact local fully $k$-rotundity on  $E^d$, by assumption that $E^d$ is compactly fully $k$-rotund and strictly $K$-monotone, in view of Theorem \ref{thm:CLFkR*=>ULUKM} we have $E$ is upper locally uniformly $K$-monotone. In consequence, since $y\in{S_E}$, by \eqref{equ:1:thm:FR*=>DIUKM} and \eqref{equ:3:thm:FR*=>DIUKM} we get 
\begin{equation*}
\norm{y_n^*-y}{E}{}\rightarrow{0}.
\end{equation*}
Finally, since $x=y$ a.e., according to \eqref{equ:2:thmFR*=>DIUKM} and by the double extract sequence theorem and by the triangle inequality of the norm in $E$ we conclude
\begin{equation*}
\norm{y_n^*-x_n^*}{E}{}\rightarrow{0},
\end{equation*}
which gives us the end of the proof.
\end{proof}

\begin{theorem}\label{thm:CFkR*=>IUKM}
	Let $E$ be a symmetric space. If $E^d$ is compactly fully $k$-rotund and strictly $K$-monotone, then $E$ is increasing uniformly $K$-monotone.
\end{theorem}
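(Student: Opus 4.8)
The plan is to mimic the proof of Theorem~\ref{thm:CFkR*=>DUKM}, but with the monotone sequence now sitting on the dominating side. By a reduction as in Proposition~4.3 of \cite{CieLewUKM} (in the increasing version) we may assume $(x_n),(y_n)\subset{E}$ with $x_n\prec{y_n}\prec{y_{n+1}}$ for every $n$ and $\norm{x_n}{E}{}\to{1}$, $\norm{y_n}{E}{}\to{1}$, and we must show $\norm{x_n^*-y_n^*}{E}{}\to{0}$. First I would deal with $(y_n^*)$. Fix $k$-subsequences $(y_{n,1}),\dots,(y_{n,k})$; since each $y_{n,i}$ comes from a tail of $(y_n)$ and $\prec$ is transitive, the batch is $\prec$-ordered, so if $y_{n,i_0}$ is the term of smallest index then $y_{n,i}^{**}\geq{y_{n,i_0}^{**}}$ for every $i$, whence $\sum_{i=1}^{k}y_{n,i}^*\succ{k\,y_{n,i_0}^*}$. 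Thus by $K$-monotonicity and the triangle inequality
\begin{equation*}
k\min_{1\leq{i}\leq{k}}\norm{y_{n,i}}{E}{}\leq\norm{\sum_{i=1}^{k}y_{n,i}^*}{E}{}\leq{k}\max_{1\leq{i}\leq{k}}\norm{y_{n,i}}{E}{},
\end{equation*}
and both bounds tend to $k$, so $\norm{\sum_{i=1}^{k}y_{n,i}^*}{E}{}\to{k}$. Applying compact full $k$-rotundity of $E^d$ (Theorem~\ref{thm:remark:1}) to $(y_n^*/\norm{y_n}{E}{})\subset{S_{E^d}}$ and passing to a subsequence, $y_{n_j}^*\to{y}$ in norm, so $y=y^*$ a.e.\ (Lemma~3.2 in \cite{KPS}), $\norm{y}{E}{}=1$, and $y_{n_j}^{**}(t)\to{y^{**}(t)}$ for all $t$ (Proposition~5.9 in \cite{BS}). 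Since $y_n^{**}(t)$ is nondecreasing in $n$, $y^{**}(t)=\sup_{n}y_n^{**}(t)$; hence the limit $y$ is forced to be the same for every subsequence, and the double extract sequence theorem gives $\norm{y_n^*-y}{E}{}\to{0}$. In particular $x_n\prec{y_n}\prec{y}$ for all $n$, so $x_n\prec{y}$ and $\norm{x_n}{E}{}\to\norm{y}{E}{}=1$.

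It remains to prove $\norm{x_n^*-y}{E}{}\to{0}$; then $\norm{x_n^*-y_n^*}{E}{}\leq\norm{x_n^*-y}{E}{}+\norm{y-y_n^*}{E}{}\to{0}$ and the proof is complete. This is where the real difficulty lies: in contrast with Theorem~\ref{thm:CFkR*=>DUKM}, the sequence $(x_n)$ carries no $\prec$-monotonicity, so there is no lower estimate $\norm{\sum_{i=1}^{k}x_{n,i}^*}{E}{}\to{k}$ and compact full $k$-rotundity cannot be applied to $(x_n^*)$ as it stands. I would resolve this by using reflexivity of $E$ (Theorem~\ref{thm:CFR=>reflexive}): along any subsequence, extract a further subsequence with $x_{n_j}^*\rightharpoonup{w}$ weakly; the cone $E^d$ is convex and norm closed (Lemma~3.2 in \cite{KPS}), hence weakly closed, so $w=w^*$ a.e.; testing against $\chi_{(0,t)}\in{E'}=E^*$ (the identification holds by order continuity of $E$, Proposition~\ref{prop:1:CFR*=>OC}) and using $x_{n_j}\prec{y}$ yields $\int_0^t w\leq\int_0^t y$ for every $t$, i.e.\ $w\prec{y}$, while weak lower semicontinuity of the norm gives $\norm{w}{E}{}\leq{1}$. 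One then shows $\norm{w}{E}{}=1$ — the crux, where compact full $k$-rotundity of $E^d$ is genuinely used, since it forbids a norm drop along a weakly convergent sequence (equivalently, it supplies the Kadec--Klee property on $E^d$): indeed, once $\norm{w}{E}{}=1$, after normalising we get $\norm{\sum_{i=1}^{k}x_{n_j,i}^*}{E}{}\to{k}$ by weak lower semicontinuity and the triangle inequality, so $(x_{n_j}^*)$ is relatively compact. Given $\norm{w}{E}{}=1=\norm{y}{E}{}$ and $w\prec{y}$, strict $K$-monotonicity forces $w=y$ a.e., and the compactness just obtained upgrades $x_{n_j}^*\rightharpoonup{y}$ to $\norm{x_{n_j}^*-y}{E}{}\to{0}$. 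Since the limit is $y$ along every subsequence, the double extract sequence theorem gives $\norm{x_n^*-y}{E}{}\to{0}$.

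The step I expect to be the main obstacle is precisely verifying $\norm{w}{E}{}=1$, i.e.\ that along a weakly convergent minimising-type sequence the norm cannot drop — this is the only place that truly needs compact full $k$-rotundity rather than mere rotundity or reflexivity. A cleaner alternative worth trying for part of the argument is to invoke Theorem~\ref{thm:CFkR*=>DUKM} (so $E$ is decreasing uniformly $K$-monotone, hence $K$-order continuous by Theorem~\ref{thm:DUKM<=>KOC&ULUKM}) and to handle $(y_n^*)$ directly via $K$-order continuity: because $(y_n)$ is $\prec$-increasing, $\int_0^t y_n^*\uparrow\int_0^t y$ for each $t$, so $y_n^*\to{y^*}$ a.e., and then the positive parts $(y_n^*-y)^+$ and $(y-y_n^*)^+$ are $\prec{y}$ with decreasing rearrangements tending to $0$ a.e., whence $\norm{y_n^*-y}{E}{}\to{0}$ by $K$-order continuity; the $(x_n^*)$ part, however, still requires the reflexivity argument above, since the decreasing rearrangements of the non-monotone sequence $(x_n)$ need not converge a.e.
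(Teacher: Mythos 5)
Your treatment of $(y_n)$ is essentially the paper's: the $\prec$-monotonicity of $(y_n)$ gives the lower bound $k\norm{y_n}{E}{}\leq\norm{\sum_{i=1}^{k}y_{n,i}^*}{E}{}$, compact full $k$-rotundity of $E^d$ then produces a norm-convergent subsequence $y_{n_j}^*\rightarrow y=y^*$ with $y\in S_E$ and $x_n\prec y_n\prec y$ for all $n$. The problem is your handling of $(x_n)$. You correctly observe that $(x_n)$ carries no $\prec$-monotonicity and that compactness cannot be applied to $(x_n^*)$ directly, but the resolution you propose has a hole exactly at the point you flag as the crux: you never prove $\norm{w}{E}{}=1$ for the weak limit $w$ of $x_{n_j}^*$. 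The justification offered --- that compact full $k$-rotundity ``forbids a norm drop along a weakly convergent sequence,'' equivalently supplies a Kadec--Klee property --- is not a valid principle: in any infinite-dimensional $CFkR$ space (e.g.\ $L^2$) there are weakly null sequences on the unit sphere, so the norm certainly can drop under weak convergence, and the Kadec--Klee property only says something once one already knows the norms converge to the norm of the limit. Your subsequent steps (``once $\norm{w}{E}{}=1$, after normalising we get $\norm{\sum_{i=1}^{k}x_{n_j,i}^*}{E}{}\rightarrow k$\dots'') are explicitly conditional on the unproved claim, so the argument is circular. Note also that the lower bound $\norm{\sum_{i=1}^{k}x_{n,i}^*}{E}{}\rightarrow k$ is genuinely false for general nonnegative decreasing norm-one sequences (take $\chi_{[0,1)}$ and $n^{-1/2}\chi_{[0,n)}$ in $L^2[0,\infty)$, whose sum has norm tending to $\sqrt{2}$), so the constraint $x_n\prec y$ with $\norm{x_n}{E}{}\rightarrow\norm{y}{E}{}$ must be exploited in an essential, sequential way; nothing in your write-up does this.

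The paper closes exactly this gap without ever applying compactness to $(x_n^*)$. Having produced $y$ with $x_n\prec y_n\prec y$ and $y^*(\infty)=0$ (the latter from order continuity via Proposition \ref{prop:1:CFR*=>OC} and Lemma 2.5 in \cite{CieKolPan}), it invokes Theorem 1 in \cite{Cies-SKM&KOC}: in a strictly $K$-monotone space, $x_n\prec y$, $\norm{x_n}{E}{}\rightarrow\norm{y}{E}{}$ and $y^*(\infty)=0$ force $x_n^{**}\rightarrow y^{**}$ globally in measure (and likewise for $y_n^{**}$). This is precisely the sequential strengthening of strict $K$-monotonicity that your weak-compactness argument is missing; in your notation it is what yields $\int_0^t w=\int_0^t y^*$ for all $t$ and hence $\norm{w}{E}{}=1$. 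The paper then finishes by combining $E\in(ULUKM)$ (Theorem \ref{thm:CLFkR*=>ULUKM}) with the characterization of upper local uniform $K$-monotonicity through measure convergence of maximal functions (Theorem 3.13 in \cite{Cies-K-mono}, cf.\ Theorem \ref{app:thm:1}$(iv)$) to get $\norm{y_n^*-x_n^*}{E}{}\rightarrow 0$. Until you either prove the norm identity $\norm{w}{E}{}=1$ or import the cited strict-$K$-monotonicity theorem, the proof is incomplete.
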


\begin{proof}
	Immediately, by Proposition 4.4 in \cite{CieLewUKM}, we may assume that $(x_n),(y_n)\subset{E}$, $x_{n}\prec{y_n}\prec{y_{n+1}}$ for every $n\in\mathbb{N}$ and 
	\begin{equation}\label{equ:1:thm:CFkR*=>IUKM}
	\norm{x_n}{E}{}\rightarrow{1}\quad\textnormal{and}\quad\norm{y_n}{E}{}\rightarrow{1}.
	\end{equation}
    Then, for any $k$-subsequences $({y_{n,1}}),({y_{n,2}}),\cdots,({y_{n,k}})$ of $(y_n)$ we have $y_n\prec{y_{n,i}}\prec{y_{n+1,i}}$ for all $n\in\mathbb{N}$ and $i\in\{1,2,\cdots,k\}$. So, it is easy to observe that
    \begin{equation*}
    k{y_n}\prec\sum_{i=1}^{k}y_{n,i}^*
    \end{equation*}
    for any $n\in\mathbb{N}$. Therefore, by symmetry of $E$ and by the triangle inequality of the norm in $E$ we obtain
    \begin{equation*}
    k\norm{y_n}{E}{}\leq\norm{\sum_{i=1}^{k}y_{n,i}^*}{E}{}\leq\sum_{i=1}^{k}\norm{y_{n,i}}{E}{}\leq{k\max_{1\leq{i}\leq{k}}}\norm{y_{n,i}}{E}{}
    \end{equation*}
    for all $n\in\mathbb{N}$. Thus, by \eqref{equ:1:thm:CFkR*=>IUKM} we get
    \begin{equation*}
    \norm{\sum_{i=1}^{k}y_{n,i}^*}{E}{}\rightarrow{k}.
    \end{equation*}
    Next, by assumption that $E^d$ is compactly fully $k$-rotund, there exist a subsequence $(y_{n_j})$ of $(y_n)$ and $y\in S_E$ such that
    \begin{equation}\label{equ:2:thm:CFkR*=>IUKM}
    \norm{y_{n_j}^*-y}{E}{}\rightarrow{0}.
    \end{equation}
    Hence, proceeding analogously as in the proof of Theorem \ref{thm:CFkR*=>DUKM} we have $y=y^*$ a.e. and for any $t>0$,
    \begin{equation*}
    y_{n_j}^{**}(t)\rightarrow{y^{**}(t)}.
    \end{equation*}
    Thus, since $x_n\prec{y_n}\prec{y_{n+1}}$ for any $n\in\mathbb{N}$ it is easy to see that 
    \begin{equation}\label{equ:3:thm:CFkR*=>IUKM}
    x_n\prec{y_n}\prec{y}
    \end{equation}
    for any $n\in\mathbb{N}$. Next, by assumption that $E^d$ is compactly fully $k$-rotund and by Proposition \ref{prop:1:CFR*=>OC} we get $E$ is order continuous. So, in view of Lemma 2.5 in \cite{CieKolPan} we obtain $y^*(\infty)=0$. In consequence, since $y\in{S_E}$, by \eqref{equ:1:thm:CFkR*=>IUKM} and \eqref{equ:3:thm:CFkR*=>IUKM} as well as by assumption that $E$ is strictly $K$-monotone, in view of Theorem 1 in \cite{Cies-SKM&KOC} we conclude that
    \begin{equation}\label{equ:4:thm:CFkR*=>IUKM}
   y_{n}^{**}\rightarrow{y^{**}}\quad\textnormal{and}\quad{}x_{n}^{**}\rightarrow{y^{**}}
   \end{equation}
   globally in measure. Now, since $E^d$ is compactly fully $k$-rotund and strictly $K$-monotone, by Theorem \ref{thm:CLFkR*=>ULUKM} we have $E$ is upper locally uniformly $K$-monotone. Hence, since $E$ is order continuous, by \eqref{equ:1:thm:CFkR*=>IUKM} and \eqref{equ:4:thm:CFkR*=>IUKM} as well as by Theorem 3.13 in \cite{Cies-K-mono} we conclude 
   \begin{equation*}
   \norm{y_{n}^*-x_{n}^*}{E}{}\rightarrow{0}, 
   \end{equation*}
   which completes the proof.
\end{proof}

Immediately, by Proposition 3.5 in \cite{CieLewUKM} and Theorem \ref{thm:CFR=>reflexive} we obtain the following relationship between compact fully $k$-rotundity and $K$-order continuity.

\begin{corollary}
Let $E$ be a symmetric space. If $E^d$ is compactly fully $k$-rotund, then the spaces $E$ is $K$-order continuous.
\end{corollary}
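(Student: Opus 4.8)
The plan is to read the corollary off from two facts already at our disposal, so that no new estimate is required. First I would apply Theorem~\ref{thm:CFR=>reflexive}: since $E^d$ is compactly fully $k$-rotund, the space $E$ is reflexive, that is, both $E$ and its associate space $E'$ are order continuous.

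Second, I would convert reflexivity into $K$-order continuity by invoking Proposition~3.5 in \cite{CieLewUKM}, which guarantees that every reflexive symmetric space is $K$-order continuous; feeding in the reflexivity just obtained finishes the proof.

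If one prefers to avoid citing that proposition, there is a self-contained route through the results of Section~3. By Proposition~\ref{prop:1:CFR*=>OC} the space $E$ is order continuous and, when $I=[0,\infty)$, also $\phi(\infty)=\infty$. In the case $I=[0,1)$ order continuity already coincides with $K$-order continuity (Corollary~2 in \cite{Cies-SKM&KOC}), so nothing more is needed. In the case $I=[0,\infty)$ it remains to rule out the embedding $E\hookrightarrow L^{1}[0,\infty)$; but the claim \eqref{equ:claim}, namely $\lim_{t\rightarrow\infty} t/\phi(t)=\infty$, was established inside the proof of Theorem~\ref{thm:CFR=>reflexive}, and by Lemma~3 in \cite{Cies-SKM&KOC} this is precisely the failure of that embedding. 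An application of Theorem~\ref{thm:KOC}, implication $(i)\Rightarrow(iii)$, then yields $E\in(KOC)$.

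The statement carries essentially no genuine obstacle of its own, since all the real work is hidden in the proof of reflexivity in Theorem~\ref{thm:CFR=>reflexive}. The only point deserving a moment's care is the dichotomy between $\alpha=1$ and $\alpha=\infty$: in the second route one has to make sure that both $\phi(\infty)=\infty$ and the non-embeddability $E\hookrightarrow L^{1}[0,\infty)$ are in place before invoking Theorem~\ref{thm:KOC}, and in the first route one relies on Proposition~3.5 in \cite{CieLewUKM} to subsume both cases uniformly.
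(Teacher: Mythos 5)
Your first route is exactly the paper's own argument: Theorem~\ref{thm:CFR=>reflexive} gives reflexivity of $E$, and Proposition~3.5 in \cite{CieLewUKM} converts reflexivity into $K$-order continuity. The alternative route through Proposition~\ref{prop:1:CFR*=>OC}, the claim \eqref{equ:claim} and Theorem~\ref{thm:KOC} is also sound, but the paper does not take it, so your proposal is correct and essentially coincides with the paper's proof.
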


\section{Application to approximation problems}

First, for the reader's convenience and the sake of completeness we recall the following characterization of the Kadec-Klee property in symmetric spaces.

\begin{theorem}\label{app:thm:1}
Let $E$ be a symmetric space. If $E$ is order continuous, then the following conditions are equivalent.
\begin{itemize}
	\item[$(i)$] $E$ has the Kadec-Klee property.
	\item[$(ii)$] $E$ is strictly $K$-monotone and has the Kadec-Klee property for global convergence in measure.
	\item[$(iii)$] $E$ is upper locally uniformly $K$-monotone. 
	\item[$(iv)$] $E$ is strictly $K$-monotone and for any $(x_n)\subset{E}$ and $x\in{E}$,
	\begin{equation*}
	x_n^{**}\rightarrow{x^{**}}\quad\textnormal{in measure and}\quad\norm{x_n}{E}{}\rightarrow\norm{x}{E}{}\quad\Rightarrow\quad\norm{x_n^*-x^*}{E}{}\rightarrow{0}.
	\end{equation*}	
\end{itemize}
\end{theorem}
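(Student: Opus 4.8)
The plan is to establish the equivalences $(ii)\Leftrightarrow(iii)\Leftrightarrow(iv)$ directly, by rearrangement-theoretic arguments, and to obtain $(i)\Leftrightarrow(ii)$ from the characterisation of Kadec--Klee properties of symmetric spaces in \cite{ChDSS}. Two observations organise the work. First, each of $(i)$, $(iii)$, $(iv)$ already contains strict $K$-monotonicity: for $(iii)$ (resp.\ $(iv)$) one feeds the constant sequence $x_{n}=y$ into the hypothesis whenever $x\prec y$ and $x^{*}\neq y^{*}$, and since the Fatou property makes $E$ $K$-monotone we have $\norm{x}{E}{}\le\norm{y}{E}{}$ while the hypothesis forbids equality; for $(i)$ this is part of \cite{ChDSS}. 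Thus only the ``convergence halves'' of the equivalences need work. Second, I will repeatedly use a Helly--Fatou selection lemma: if $(z_{n})$ is bounded in $E$, then since each map $t\mapsto tz_{n}^{**}(t)$ is concave increasing and $z_{n}^{**}(t)\le\norm{z_{n}}{E}{}/\phi(t)$, where $\phi$ denotes the fundamental function of $E$ (Proposition~5.9 in \cite{BS}), a subsequence satisfies $z_{n_{j}}^{**}\to h$ pointwise on $(0,\infty)$ with $h=g^{**}$ for a decreasing $g$; comparing Lebesgue points then gives $z_{n_{j}}^{*}\to g$ a.e., and the Fatou property yields $g\in E^{d}$ with $\norm{g}{E}{}\le\liminf_{j}\norm{z_{n_{j}}}{E}{}$.

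$(iv)\Rightarrow(iii)$. Let $x\prec x_{n}$ and $\norm{x_{n}}{E}{}\to\norm{x}{E}{}$. I claim $x_{n}^{**}(t)\to x^{**}(t)$ for every $t>0$: since $x\prec x_{n}$ forces $x_{n}^{**}\ge x^{**}$, a failure of the claim would, via the selection lemma, produce a subsequence with $x_{n}^{**}\to g^{**}$ pointwise, $g\in E^{d}$, $g^{**}\ge x^{**}$, $g\ne x^{*}$ and $x_{n}^{*}\to g$ a.e., hence $x\prec g$ and (Fatou) $\norm{g}{E}{}\le\norm{x}{E}{}$; but strict $K$-monotonicity together with $K$-monotonicity then gives the absurdity $\norm{x}{E}{}<\norm{g}{E}{}\le\norm{x}{E}{}$. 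So $x_{n}^{**}\to x^{**}$ in measure, and the implication in $(iv)$ finishes. $(iii)\Rightarrow(iv)$. Assume $x_{n}^{**}\to x^{**}$ in measure and $\norm{x_{n}}{E}{}\to\norm{x}{E}{}$; by a double-extract argument we may assume (selection lemma plus the measure hypothesis) that $x_{n}^{*}\to x^{*}$ a.e. Set $v_{n}=\max(x_{n}^{*},x^{*})\in E^{d}$, so $x\prec v_{n}$ and $v_{n}-x_{n}^{*}=(x^{*}-x_{n}^{*})^{+}\le x^{*}\in E$ tends to $0$ a.e.; order continuity gives $\norm{v_{n}-x_{n}^{*}}{E}{}\to0$, hence $\norm{v_{n}}{E}{}\to\norm{x}{E}{}$, and $(iii)$ applied to $(v_{n})$ gives $\norm{v_{n}-x^{*}}{E}{}\to0$; adding the two estimates, $\norm{x_{n}^{*}-x^{*}}{E}{}\to0$.

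$(iii)\Leftrightarrow(ii)$. For $(iii)\Rightarrow(ii)$, let $x_{n}\to x$ globally in measure with $\norm{x_{n}}{E}{}\to\norm{x}{E}{}$; then $x_{n}^{*}\to x^{*}$ a.e., the argument of $(iv)\Rightarrow(iii)$ (strict $K$-monotonicity now being available) forces $x_{n}^{**}\to x^{**}$ pointwise, hence by $(iii)\Rightarrow(iv)$ we get $\norm{x_{n}^{*}-x^{*}}{E}{}\to0$, and since $x$ is a point of order continuity global convergence in measure upgrades this to $\norm{x_{n}-x}{E}{}\to0$ exactly as at the close of the proof of Theorem~\ref{thm:x&x*:LFkR} (Proposition~2.4 in \cite{CzeKam}); so $(ii)$ holds. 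For $(ii)\Rightarrow(iii)$, given $x\prec x_{n}$ with $\norm{x_{n}}{E}{}\to\norm{x}{E}{}$, strict $K$-monotonicity (part of $(ii)$) and the selection lemma again force $x_{n}^{**}\to x^{**}$ pointwise, hence $x_{n}^{*}\to x^{*}$ a.e.; as order continuity forces $x^{*}(\infty)=0$ when $I=[0,\infty)$ (Remark~\ref{rem:FP&phi} and Lemma~2.5 in \cite{CieKolPan}), a.e.\ convergence of these decreasing functions is in fact \emph{global} convergence in measure, so the $H_{g}$ property at the point $x^{*}$ gives $\norm{x_{n}^{*}-x^{*}}{E}{}\to0$, i.e.\ $(iii)$.

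The remaining equivalence $(i)\Leftrightarrow(ii)$ is quoted from \cite{ChDSS}, and I expect it to be the genuine obstacle. Going from the Kadec--Klee (Radon--Riesz) property to strict $K$-monotonicity, and back from $H_{g}$ and strict $K$-monotonicity to the Kadec--Klee property, both require bridging \emph{weak} convergence and convergence \emph{in measure} in an order continuous symmetric space: one direction rests on the fact that the Hardy--Littlewood--P\'olya orbit $\{y:y\prec z\}$ lies in the sequential weak closure of the set of functions equimeasurable with $z$ (a Ryff / Chong--Rice phenomenon, available here because order continuous symmetric spaces are separable), and the other on a Koml\'{o}s- or Ces\`{a}ro-type subsequence extraction. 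Everything else above is rearrangement bookkeeping together with the Helly--Fatou selection and order continuity.
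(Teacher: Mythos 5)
Your overall architecture differs from the paper's: the paper disposes of the whole theorem by citation, getting $(i)\Leftrightarrow(ii)$ from Theorem~2.10, Corollary~1.6 and Proposition~1.7 of \cite{ChDSS} and the cycle $(ii)\Leftrightarrow(iii)\Leftrightarrow(iv)$ from Theorem~3.13 of \cite{Cies-K-mono}, whereas you quote \cite{ChDSS} only for $(i)\Leftrightarrow(ii)$ and try to reprove the remaining equivalences from scratch. Parts of this succeed: the implication $(iii)\Rightarrow(iv)$ via $v_{n}=\max(x_{n}^{*},x^{*})$ is correct and clean, and your observation that $(iii)$ subsumes strict $K$-monotonicity (constant test sequence plus $K$-monotonicity from the Fatou property) is fine.

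The genuine gap is in your Helly--Fatou selection lemma, and it propagates into $(iv)\Rightarrow(iii)$, $(ii)\Rightarrow(iii)$ and $(iii)\Rightarrow(ii)$. Helly does give a subsequence with $\int_{0}^{t}z_{n_{j}}^{*}\rightarrow H(t)$ for a concave increasing $H$, and $z_{n_{j}}^{*}\rightarrow g:=H'$ a.e.; but $H(t)=c+\int_{0}^{t}g$ with $c=H(0^{+})$ possibly strictly positive (mass escaping to the origin), in which case the pointwise limit $h(t)=c/t+g^{**}(t)$ of $z_{n_{j}}^{**}$ is \emph{not} of the form $g^{**}$. Your bound $z_{n}^{**}(t)\leq\norm{z_{n}}{E}{}/\phi(t)$ kills $c$ only when $\lim_{t\rightarrow 0^{+}}t/\phi(t)=0$, and neither order continuity nor strict $K$-monotonicity forces this: for instance $\Lambda_{1,w}$ on $[0,1]$ with $w(t)=2-2t$ has $\norm{x}{\Lambda_{1,w}}{}=2\int_{0}^{1}tx^{**}(t)\,dt$, is order continuous and strictly $K$-monotone, yet $\phi(t)/t\rightarrow 2$ and the sequence $z_{n}=n\chi_{[0,1/n)}$ is norm-bounded with $z_{n}^{**}\rightarrow 1/t$ while $z_{n}^{*}\rightarrow 0$ a.e. Concretely, in $(iv)\Rightarrow(iii)$ you need $x\prec g$ to invoke strict $K$-monotonicity, but you only get $x^{**}\leq c/t+g^{**}$, which does not yield $x\prec g$ when $c>0$; the same failure occurs where you claim $x_{n}^{**}\rightarrow x^{**}$ pointwise in $(iii)\Rightarrow(ii)$ and $(ii)\Rightarrow(iii)$. (In the example above the escaping mass can be excluded by an ad hoc computation with the explicit norm, which shows the conclusion is not lost but that your generic argument does not reach it.) To repair the proof you must either rule out $c>0$ under the standing hypotheses --- e.g.\ by exploiting $\norm{x_{n}}{E}{}\rightarrow\norm{x}{E}{}$ together with lower semicontinuity of the norm along $x_{n}^{**}\rightarrow h$, which is essentially what the proof of Theorem~3.13 in \cite{Cies-K-mono} does --- or fall back on that citation as the paper does. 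A minor additional point: for $(iv)$ the "constant sequence" remark is vacuous, since strict $K$-monotonicity is already an explicit part of condition $(iv)$, and feeding $x_{n}\equiv y$ into $(iv)$ would anyway require $y^{**}=x^{**}$ rather than merely $x\prec y$.
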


\begin{proof}
Immediately, using the same technique as in the proof of Theorem 2.10 in \cite{ChDSS} and in view of Corollary 1.6 and Proposition 1.7 in \cite{ChDSS} we get $(i)\Leftrightarrow(ii)$. In consequence, by Theorem 3.13 in \cite{Cies-K-mono} we have $(ii)\Leftrightarrow(iii)\Leftrightarrow(iv)$.
\end{proof}

Now, according to Theorem 3 in \cite{HuKoLe} and by Theorem \ref{app:thm:1} we present a correspondence between approximative compactness and $K$-monotonicity properties in symmetric spaces.

\begin{corollary}\label{app:coro:1}
Let $E$ be a symmetric space. The conditions are equivalent.
\begin{itemize}
	\item[$(i)$] $E$ is approximatively compact.
	\item[$(ii)$] $E$ is reflexive and strictly $K$-monotone and has the Kadec-Klee property for global convergence in measure.
	\item[$(iii)$] $E$ is reflexive and upper locally uniformly $K$-monotone. 
	\item[$(iv)$] $E$ is reflexive and strictly $K$-monotone and for any $(x_n)\subset{E}$ and $x\in{E}$,
	\begin{equation*}
	x_n^{**}\rightarrow{x^{**}}\quad\textnormal{in measure and}\quad\norm{x_n}{E}{}\rightarrow\norm{x}{E}{}\quad\Rightarrow\quad\norm{x_n^*-x^*}{E}{}\rightarrow{0}.
	\end{equation*}	
\end{itemize}
\end{corollary}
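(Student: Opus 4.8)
The plan is to reduce the whole equivalence to two already available facts: Theorem~3 in \cite{HuKoLe}, which characterizes approximative compactness of a Banach space as the conjunction of reflexivity and the Kadec--Klee property, and Theorem~\ref{app:thm:1}, which unpacks the Kadec--Klee property for an order continuous symmetric space into the $K$-monotonicity conditions appearing in $(ii)$, $(iii)$, $(iv)$. First I would apply Theorem~3 in \cite{HuKoLe} to $E$, obtaining that $(i)$ holds if and only if $E$ is reflexive and $E$ has the Kadec--Klee property.

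Next I would record the elementary observation that reflexivity of $E$ --- which by definition means that both $E$ and its associate space $E'$ are order continuous --- in particular forces $E\in(OC)$, so the standing hypothesis of Theorem~\ref{app:thm:1} is automatically satisfied as soon as $E$ is reflexive. Consequently, under the assumption that $E$ is reflexive, Theorem~\ref{app:thm:1} gives the chain: $E$ has the Kadec--Klee property $\Leftrightarrow$ $E$ is strictly $K$-monotone and has the Kadec--Klee property for global convergence in measure $\Leftrightarrow$ $E$ is upper locally uniformly $K$-monotone $\Leftrightarrow$ $E$ is strictly $K$-monotone and satisfies the implication displayed in $(iv)$. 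Adjoining ``$E$ is reflexive'' to each term of this chain, and combining with the first step, yields precisely the equivalences $(i)\Leftrightarrow(ii)\Leftrightarrow(iii)\Leftrightarrow(iv)$.

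The only point that needs care is the bookkeeping of hypotheses: since each of $(ii)$, $(iii)$, $(iv)$ carries reflexivity as an explicit conjunct, in every direction one may first extract reflexivity (hence order continuity) and only then invoke Theorem~\ref{app:thm:1}, so no circularity arises. I do not expect a substantive obstacle here, as the mathematical content is entirely contained in the two quoted results; it suffices to state clearly that reflexive $\Rightarrow$ order continuous so that the applicability of Theorem~\ref{app:thm:1} is transparent to the reader.
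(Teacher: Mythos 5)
Your argument is correct and coincides with the paper's own proof, which likewise derives the corollary by combining Theorem~3 in \cite{HuKoLe} (approximative compactness $\Leftrightarrow$ reflexivity plus the Kadec--Klee property) with Theorem~\ref{app:thm:1}. Your explicit remark that reflexivity, meaning order continuity of both $E$ and $E'$, guarantees the order continuity hypothesis of Theorem~\ref{app:thm:1} is exactly the bookkeeping the paper leaves implicit.
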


In the view of the previous result, we present the complete criteria for approximative compactness in the Lorentz space $\Gamma_{p,w}$.

\begin{theorem}\label{app:thm:2}
	Let $1<p<\infty$ and $w$ be a weight function. The statements are equivalent.
	\begin{itemize}
		\item[$(i)$] $\Gamma_{p,w}$ is approximatively compact.
		\item[$(ii)$] $\Gamma_{p,w}$ is reflexive and strictly $K$-monotone.
		\item[$(iii)$] $\Gamma_{p,w}$ is reflexive and $W$ is strictly increasing.
	\end{itemize}
\end{theorem}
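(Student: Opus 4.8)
The plan is to route everything through Corollary~\ref{app:coro:1}, which already characterizes approximative compactness of a symmetric space by reflexivity together with strict $K$-monotonicity and the Kadec--Klee property for global convergence in measure, equivalently by reflexivity together with upper local uniform $K$-monotonicity. In particular the implication $(i)\Rightarrow(ii)$ of Theorem~\ref{app:thm:2} is immediate: by Corollary~\ref{app:coro:1}, if $\Gamma_{p,w}$ is approximatively compact, then it is reflexive and strictly $K$-monotone. So the work lies in $(ii)\Leftrightarrow(iii)$ and in $(iii)\Rightarrow(i)$.

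For $(ii)\Leftrightarrow(iii)$ the reflexivity clauses are identical, so it suffices to show that $\Gamma_{p,w}$ is strictly $K$-monotone iff $W$ is strictly increasing, i.e.\ $w>0$ a.e.; this is the $\Gamma_{p,w}$-instance of the known strict $K$-monotonicity criterion (cf.~\cite{CieKolPluSKM,Cies-SKM&KOC}). I would first recall that $x^{**}$ determines $x^{*}$ a.e.\ (since $t\,x^{**}(t)=\int_{0}^{t}x^{*}$) and that $x^{**}$ is continuous, so that $x\prec y$ with $x^{*}\neq y^{*}$ forces $x^{**}(t)<y^{**}(t)$ on a set of positive measure; if $w>0$ a.e.\ then strict monotonicity of $f\mapsto\int f^{p}w$ on nonnegative $f$ yields $\|x\|_{\Gamma_{p,w}}<\|y\|_{\Gamma_{p,w}}$, hence $\Gamma_{p,w}$ is strictly $K$-monotone. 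Conversely, if $w\equiv0$ on an interval $(a,b)$, I would construct $x\prec y$ with $x^{*}=y^{*}$ outside $(a,b)$ and, on $(a,b)$, with $y^{*}$ a decreasing step function and $x^{*}$ a flatter decreasing function of the same integral over $(a,b)$ chosen so that $x\prec y$; then $x^{**}=y^{**}$ on $\{w>0\}$, so $\|x\|_{\Gamma_{p,w}}=\|y\|_{\Gamma_{p,w}}$ while $x^{*}\neq y^{*}$, and strict $K$-monotonicity fails.

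For the substantive direction $(iii)\Rightarrow(i)$, by Corollary~\ref{app:coro:1}$(iii)\Rightarrow(i)$ it is enough to show that $\Gamma_{p,w}$ (reflexive, $W$ strictly increasing) is upper locally uniformly $K$-monotone. Reflexivity forces order continuity, so Theorem~\ref{app:thm:1} applies and reduces $ULUKM$ to strict $K$-monotonicity --- already secured by $(iii)$ --- together with the Kadec--Klee property for global convergence in measure. I would then invoke (or establish) the fact that $\Gamma_{p,w}$ with $1<p<\infty$ and $w>0$ a.e.\ has this Kadec--Klee property, which rests on the $L^{p}(w\,dt)$-shape of the norm $\|x\|_{\Gamma_{p,w}}=\|x^{**}\|_{L^{p}(w\,dt)}$ and on the Kadec--Klee property of $L^{p}$ for $1<p<\infty$; alternatively one may quote directly that such $\Gamma_{p,w}$ is $ULUKM$ (cf.~\cite{CieLewUKM,ChDSS,hkm-geo-prop}) and feed this into Corollary~\ref{app:coro:1}.

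The main obstacle is precisely this verification. Given $x_{n}\to x$ globally in measure with $\|x_{n}\|_{\Gamma_{p,w}}\to\|x\|_{\Gamma_{p,w}}$, one would like to pass from control of the maximal functions $x_{n}^{**}$ in $L^{p}(w\,dt)$, where the Kadec--Klee property of $L^{p}$ is clean, back to $\|x_{n}-x\|_{\Gamma_{p,w}}=\|(x_{n}-x)^{**}\|_{L^{p}(w\,dt)}\to0$; the delicacy is that $x\mapsto x^{**}$ is only sublinear and does not commute with differences, so one cannot read this off directly, and when $\alpha=\infty$ one must also use the norm convergence $\|x_{n}\|_{\Gamma_{p,w}}\to\|x\|_{\Gamma_{p,w}}$ to exclude mass escaping to infinity. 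Handling this --- via the subadditivity bound $(x_{n}-x)^{**}\le x_{n}^{**}+x^{**}$, pointwise convergence of the rearrangements, and a Vitali-type argument with order continuity controlling the tail --- is where the real work sits; everything else is bookkeeping on top of Corollary~\ref{app:coro:1} and Theorem~\ref{app:thm:1}.
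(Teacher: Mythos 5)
Your route is the paper's route: the author disposes of this theorem in two lines, getting $(ii)\Leftrightarrow(iii)$ from Theorem 2.10 of \cite{CieKolPluSKM} (exactly the strict $K$-monotonicity criterion for $\Gamma_{p,w}$ that you sketch) and $(i)\Leftrightarrow(ii)$ from Corollary \ref{app:coro:1} combined with Theorem 4.1 of \cite{CieKolPlu} (the Kadec--Klee property for global convergence in measure that you correctly identify as the one substantive verification and offer to quote). Your sketches of the two ingredients are sound in outline; the only slip is the gloss ``$W$ strictly increasing, i.e.\ $w>0$ a.e.'' --- strict increase of $W$ only means $w$ is not a.e.\ null on any interval, which is weaker, but your argument via continuity of $x^{**}$ uses only that weaker property, so nothing breaks.
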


\begin{proof}
Immediately, by Theorem 2.10 in \cite{CieKolPluSKM} we have $(ii)\Leftrightarrow(iii)$. Next, by Theorem 4.1 in \cite{CieKolPlu} and by Corollary \ref{app:coro:1} we conclude $(i)\Leftrightarrow(ii)$.
\end{proof}

We investigate reflexivity in the Lorentz spaces $\Gamma_{p,w}$.

\begin{lemma}
Let $1<p,p'<\infty$, $p'p=p'+p$ and let $w\geq{0}$ be a weight function on $[0,\infty)$ such that $\int_{0}^{t}w(s)s^{-p}ds=\infty$ for all $t>0$. The following statements are equivalent.
\begin{itemize}
	\item[$(i)$] The Lorentz space $\Gamma_{p,w}$ is reflexive.
	\item[$(ii)$] 
	$W(\infty)=\int_{0}^{\infty}w(s)ds=\infty$ and $V(\infty)=\int_{0}^{\infty}v(s)ds=\infty,$\\ where  ${v(t)=\frac{t^{p'-1}W(t)W_p(t)}{(W(t)+W_p(t))^{p'+1}}}$ for any $t\in(0,\infty).$
\end{itemize}
\end{lemma}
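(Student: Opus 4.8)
The plan is to decide reflexivity through the order continuity of $\Gamma_{p,w}$ and of its associate space. By definition $\Gamma_{p,w}$ is reflexive precisely when both $\Gamma_{p,w}$ and $(\Gamma_{p,w})'$ are order continuous; since $\Gamma_{p,w}$, and hence also $(\Gamma_{p,w})'$, has the Fatou property, this is the characterization I would exploit. For the space itself, the standing assumption $w\in D_p$ together with $\alpha=\infty$ gives, by the order continuity criterion recalled in Section~2 (see \cite{KMGam}), that $\Gamma_{p,w}$ is order continuous if and only if $W(\infty)=\int_0^\infty w=\infty$; this produces the first half of condition $(ii)$. I would also record here that the hypothesis $\int_0^t w(s)s^{-p}ds=\infty$ for every $t>0$ forces $W_p(t)/t^p=\int_t^\infty s^{-p}w(s)ds\ge\int_t^1 s^{-p}w(s)ds\to\infty$ as $t\to 0^+$, so that $\phi_{\Gamma_{p,w}}(t)/t=\big((W(t)+W_p(t))/t^p\big)^{1/p}\to\infty$ and hence $\phi_{(\Gamma_{p,w})'}(0^+)=\lim_{t\to0^+}t/\phi_{\Gamma_{p,w}}(t)=0$; this is exactly the feature of $w$ near the origin that a space needs for its associate space to have any chance of being order continuous, and it is what turns the associate space into a genuine Lorentz $\Gamma$-space in the next step.

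The central step is to identify $(\Gamma_{p,w})'$. Here I would invoke the known description of the K\"othe dual of a Lorentz $\Gamma$-space for $1<p<\infty$, obtained via the conjugate Hardy operator $g\mapsto\int_t^\infty g(s)s^{-1}ds$ and the level-function technique (see \cite{KMGam}): under the present hypotheses on $w$, the $\Lambda$-type summand that the associate space of a general $\Gamma_{p,w}$ carries near the origin is absorbed, and one obtains
\[
(\Gamma_{p,w})'=\Gamma_{p',v}\qquad\text{with equivalent norms,}
\]
where $v(t)=\dfrac{t^{p'-1}W(t)W_p(t)}{(W(t)+W_p(t))^{p'+1}}$; in particular $v\in D_{p'}$, so $\Gamma_{p',v}$ is a nontrivial r.i.\ function space with the Fatou property. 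Verifying $v\in D_{p'}$ and that this clean $\Gamma_{p',v}$ form is exact, not just a two-sided estimate, is the part I expect to be the main obstacle: the K\"othe dual of a $\Gamma$-space need not itself be a $\Gamma$-space, and the reduction to $\Gamma_{p',v}$ rests precisely on the behaviour of $w$ at $0$ secured by $\int_0^t w(s)s^{-p}ds=\infty$.

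With $(\Gamma_{p,w})'=\Gamma_{p',v}$ in hand the argument closes quickly. Since $v\in D_{p'}$ and the underlying interval is again $[0,\infty)$, applying the same order continuity criterion (see \cite{KMGam}) to $\Gamma_{p',v}$ shows that $(\Gamma_{p,w})'$ is order continuous if and only if $V(\infty)=\int_0^\infty v=\infty$. Combining the two equivalences, $\Gamma_{p,w}$ is reflexive if and only if $\Gamma_{p,w}$ and $(\Gamma_{p,w})'$ are both order continuous, if and only if $W(\infty)=\infty$ and $V(\infty)=\infty$, which is exactly $(i)\Leftrightarrow(ii)$.
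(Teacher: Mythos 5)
Your proposal is correct and follows essentially the same route as the paper: reduce reflexivity to order continuity of $\Gamma_{p,w}$ and of its associate space, identify $(\Gamma_{p,w})'$ with $\Gamma_{p',v}$ (the paper cites Theorem A of Gogatishvili--Kerman for this, valid once $W(\infty)=\infty$), and then apply the criterion that a $\Gamma$-space on $[0,\infty)$ is order continuous iff the integral of its weight diverges. The only point worth tightening is to note explicitly, as the paper does, that the identification $(\Gamma_{p,w})'=\Gamma_{p',v}$ is used only after $W(\infty)=\infty$ has been secured, which costs nothing since both $(i)$ and $(ii)$ fail when $W(\infty)<\infty$.
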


\begin{proof}
First, by Corollary 4.4 in \cite{BS} we easily observe that the Lorentz space $\Gamma_{p,w}$ is reflexive if and only if $\Gamma_{p,w}$ and its associate space $(\Gamma_{p,w})'$ are order continuous. Next, since $1<p,p'<\infty$ and $p'p=p'+p$ as well as $\int_{0}^{t}w(s)s^{-p}ds=\infty$ for all $t>0$, by Theorem A in \cite{GK} it follows that $(\Gamma_{p,w})'$ coincides with the Lorentz space $\Gamma_{p',v}$ under the assumption that $W(\infty)=\infty$, i.e. we have 
\begin{equation*}\label{equ:approx:GK}
\norm{x}{(\Gamma_{p,w})'}{}\approx\norm{x}{\Gamma_{p',v}}{}\quad\textnormal{for all}\quad{x\in}(\Gamma_{p,w})'.
\end{equation*}
Finally, according to Proposition 1.4 in \cite{KMGam} we conclude $\Gamma_{p,w}$ is reflexive if and only if $W(\infty)=V(\infty)=\infty$.
\end{proof}

Now, we present some examples of the Lorentz spaces which are reflexive and also approximatively compact.

\begin{example}
Let $1<p<\infty$ and $w\geq{0}$ be a weight function such that $W(t)=\int_{0}^{t}w$ satisfies $\Delta_2$ condition and $W(\infty)=\infty$. Define $$v(t)=\left({t}/{W(t)}\right)^{p'}w(t)$$ for any $t>0$, where $p'=p/(p-1)$. Then, by Proposition 0.1 in \cite{KMGam} we get $(\Lambda_{p,w})'=\Gamma_{p',v}$. Next, by Corollary 5.3 in \cite{BS} it follows that $W$ is quasiconcave, and so $t/W(t)$ is increasing on $(0,\infty)$. Hence, taking $t_0>0$ we observe
\begin{align*}
V(\infty)&=\int_{0}^{\infty}\left(\frac{s}{W(s)}\right)^{p'}w(s)ds\geq\int_{t_0}^{\infty}\left(\frac{s}{W(s)}\right)^{p'}w(s)ds\\
&\geq\int_{t_0}^{\infty}\left(\frac{t_0}{W(t_0)}\right)^{p'}w(s)ds=W(\infty)\left(\frac{t_0}{W(t_0)}\right)^{p'}.
\end{align*}
Therefore, since $W(\infty)=\infty$ we have $V(\infty)=\infty$. In consequence, by Proposition 1.4 in \cite{KMGam} it follows that the Lorentz spaces  $\Lambda_{p,w}$ and $\Gamma_{p',v}$ are order continuous. Finally, by Corollary 4.4 and Theorem 2.7 in \cite{BS} we conclude $\Lambda_{p,w}$ and $\Gamma_{p',v}$ are reflexive. Now, if we assume additionally that $W$ is strictly increasing, by definition of $v$ and by Theorem \ref{app:thm:2} we get $\Gamma_{p',v}$ is approximatively compact.
\end{example}

\begin{example}
Consider $1<p<\infty$ and $w\geq{0}$ a weight function such that $W(\infty)=\int_{0}^{1}w(s)s^{-p}ds=\infty$ and $w$ satisfies condition $RB_p$, i.e. there exists $A>0$ such that for all $t>0$ we have $W(t)\leq{A}W_p(t)$. Define $p'=p/(p-1)$ and
$$v(t)=\frac{d}{dt}\left(\int_{t}^{\infty}w(s)s^{-p}ds\right)^{\frac{-1}{p-1}}$$
for any $t>0$. Then, by Corollary 1.9 in \cite{KMGam} it follows that the dual space $(\Gamma_{p,w})^*$ of the Lorentz space $\Gamma_{p,w}$ coincides with $\Lambda_{p',v}$. Next, we notice that
\begin{equation*}
V(\infty)=\lim_{t\rightarrow\infty}\left(\int_{t}^{\infty}w(s)s^{-p}ds\right)^{\frac{-1}{p-1}}=\infty.
\end{equation*} 
Consequently, using the same argumentation as in the previous example we obtain $\Gamma_{p,w}$ and $\Lambda_{p',v}$ are reflexive. Finally, if we suppose additionally that $W$ is strictly increasing, by Theorem \ref{app:thm:2} we get $\Gamma_{p,w}$ is approximatively compact.
\end{example}

The next corollaries follow directly from Corollaries 3.9 and 3.10, Theorem 3.13 in \cite{Cies-JAT} and Theorem \ref{prop:Orlicz:KOC}.

\begin{corollary}
	Let $\psi$ be an Orlicz function and let $\mathcal{A}\subset{L^\psi}$ be a closed subset such that for any $a\in\mathcal{A}$ we have $a^*\in\mathcal{A}$. If $\psi$ satisfies $\Delta_2$ condition and in case when $\alpha=\infty$ we have $\psi$ is $N$-function at zero, then for any $x\in{L^\psi}$ such that $\mathcal{A}\prec{x}$ the set $P_{\mathcal{A}}(x^*)$ is proximinal.
\end{corollary}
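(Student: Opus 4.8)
The plan is to reduce the Orlicz hypotheses to the single property of $K$-order continuity and then to extract the proximinality of the set $P_{\mathcal{A}}(x^*)$ from the best dominated approximation theory of \cite{Cies-JAT}. First I would apply Theorem~\ref{prop:Orlicz:KOC}$(ii)$: the assumptions $\psi\in\Delta_2$ and, in the case $\alpha=\infty$, $\psi$ an $N$-function at zero, are exactly the right-hand side of the equivalence stated there, and hence yield that $L^\psi$ is $K$-order continuous together with $\Phi_{L^\psi}(\infty)=\infty$ when $\alpha=\infty$. This is the only place the Orlicz structure enters, and it places the problem inside the $K$-order continuous framework underlying Corollaries~3.9 and 3.10 and Theorem~3.13 in \cite{Cies-JAT}.

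Next I would record that $\mathcal{A}$ satisfies the hypotheses of that framework: it is closed, invariant under decreasing rearrangement, and dominated by $x$ in the Hardy-Littlewood-P\'olya order, and since $x\sim x^*$ forces $x^{**}=(x^*)^{**}$ this domination is the same as $\mathcal{A}\prec x^*$. To prove that the \emph{set} $P_{\mathcal{A}}(x^*)$ is proximinal, i.e. that \emph{every} $z\in L^\psi$ (not only $x^*$) admits a nearest point in $P_{\mathcal{A}}(x^*)$, I would establish three facts, all of which are the substance of the cited results once $L^\psi\in(KOC)$. First, $P_{\mathcal{A}}(x^*)\neq\emptyset$: here the decreasing approximand $x^*$ and the rearrangement invariance of $\mathcal{A}$ let one pass to decreasing approximants, on which the Hardy-Littlewood-P\'olya order interval of $x^*$ is compact for convergence in measure and $K$-order continuity upgrades this to norm convergence, producing a minimizer in $\mathcal{A}$. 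Second, $P_{\mathcal{A}}(x^*)$ is closed, so norm limits of its elements remain best approximants of $x^*$. Third, for an arbitrary $z$ and a minimizing sequence $(a_n)\subset P_{\mathcal{A}}(x^*)$ of $\dist(z,P_{\mathcal{A}}(x^*))$, the uniform domination $a_n\prec x^*$ together with the $K$-order continuity machinery of \cite{Cies-JAT} furnishes the compactness needed to extract a norm-convergent subsequence, whose limit lies in $P_{\mathcal{A}}(x^*)$ by closedness and realizes the distance to $z$. Collecting these facts over all $z$ is precisely the proximinality of the set $P_{\mathcal{A}}(x^*)$ asserted by Corollaries~3.9 and 3.10 and Theorem~3.13 in \cite{Cies-JAT}.

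The main obstacle is the set-level statement rather than the mere solvability $P_{\mathcal{A}}(x^*)\neq\emptyset$, which is only the first of the three facts. The delicate point is the compactness of minimizing sequences for the \emph{outer} distance $\dist(z,P_{\mathcal{A}}(x^*))$, which must be controlled through the Hardy-Littlewood-P\'olya domination by $x^*$ rather than through any compactness of the ambient space $L^\psi$ (which need not even be reflexive under $K$-order continuity alone). This is exactly where $K$-order continuity and the rearrangement invariance of $\mathcal{A}$ do the work, and where the case $\alpha=\infty$ genuinely requires $\Phi_{L^\psi}(\infty)=\infty$, equivalently $\psi$ being an $N$-function at zero, since without $x^*(\infty)=0$ the relevant order interval fails to be compact and the extraction collapses. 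Once this compactness is secured, closedness of $P_{\mathcal{A}}(x^*)$ and lower semicontinuity of the distance functional finish the argument, so the whole proof condenses to Theorem~\ref{prop:Orlicz:KOC} followed by the cited theorems of \cite{Cies-JAT}.
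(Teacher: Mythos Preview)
Your reduction via Theorem~\ref{prop:Orlicz:KOC} to $K$-order continuity of $L^\psi$ and then to the approximation results of \cite{Cies-JAT} is exactly the paper's argument; the corollary is stated there as an immediate consequence of Theorem~\ref{prop:Orlicz:KOC} together with Corollaries~3.9, 3.10 and Theorem~3.13 in \cite{Cies-JAT}, with no further work.

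Where you go astray is in reading the conclusion. In this paper ``proximinal'' is defined in the Preliminaries as a property of the best approximation \emph{problem}: ``$P_{\mathcal A}(x^*)$ is proximinal'' simply means $P_{\mathcal A}(x^*)\neq\emptyset$. It does \emph{not} mean that the set $P_{\mathcal A}(x^*)$ is a proximinal subset of $L^\psi$ in the sense that every $z\in L^\psi$ has a nearest point in it. Consequently your ``first fact'' is already the full statement, and your ``facts 2 and 3'' --- closedness of $P_{\mathcal A}(x^*)$ and compactness of minimizing sequences for $\dist(z,P_{\mathcal A}(x^*))$ for arbitrary $z$ --- are not needed. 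The ``main obstacle'' you flag does not exist here; in particular the delicate compactness argument you sketch for the outer distance (which would genuinely require more than $K$-order continuity, since the Hardy--Littlewood--P\'olya interval $\{a:a\prec x^*\}$ is not norm-compact in general) is superfluous and, as written, not justified.

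So: keep the first paragraph of your proposal, drop the set-level reading, and the proof is complete and identical to the paper's.
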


\begin{corollary}
	Let $\psi$ be an Orlicz function and in case when $\alpha=\infty$, $a_\psi=0$. If for any $x\in{L^\psi}$ and any closed subset $\mathcal{A}\subset{L^\psi}$ such that $\mathcal{A}\prec{x}$ we have $P_{\mathcal{A}}(x)$ is proximinal, then $L^\psi$ is $K$-order continuous.
\end{corollary}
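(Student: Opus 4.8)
The plan is to combine the intrinsic description of $K$-order continuity of $L^\psi$ from Theorem~\ref{prop:Orlicz:KOC} with the approximation-theoretic characterisation of $K$-order continuity obtained in \cite{Cies-JAT}. The point is that the hypothesis of the corollary---that $P_{\mathcal{A}}(x)$ is proximinal for every $x\in L^\psi$ and every closed $\mathcal{A}\subset L^\psi$ with $\mathcal{A}\prec x$---is exactly the condition which, by Theorem~3.13 together with Corollaries~3.9 and~3.10 in \cite{Cies-JAT}, forces a symmetric space to be $K$-order continuous, provided that space satisfies the structural normalisation $\phi(\infty)=\infty$ when the underlying measure space is non-finite.

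First I would settle this structural normalisation. If $\alpha=1$ there is nothing to check and the criterion of \cite{Cies-JAT} applies to $E=L^\psi$ directly. If $\alpha=\infty$, the standing assumption $a_\psi=0$ together with Theorem~\ref{prop:Orlicz:KOC}$(i)$ gives $\Phi_{L^\psi}(\infty)=\infty$, and then by Remark~\ref{rem:FP&phi} every $x\in L^\psi$ satisfies $x^*(\infty)=0$; this is precisely the setting in which the best dominated approximation results of \cite{Cies-JAT} are valid. Hence in both cases Corollaries~3.9 and~3.10 and Theorem~3.13 of \cite{Cies-JAT} are available for $L^\psi$, and the proximinality assumption yields $L^\psi\in(KOC)$. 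As a byproduct, feeding this back into Theorem~\ref{prop:Orlicz:KOC}$(ii)$ one also recovers $\psi\in\Delta_2$ and, when $\alpha=\infty$, that $\psi$ is an $N$-function at zero, although neither is part of the assertion.

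The only genuinely delicate point---and the reason the hypothesis $a_\psi=0$ cannot be dropped when $\alpha=\infty$---is to make sure that the abstract criterion of \cite{Cies-JAT} really does apply. If $a_\psi>0$ then $L^\psi$ contains the nonzero constant $a\chi_{[0,\infty)}$ for any $0<a<a_\psi$, so $\Phi_{L^\psi}(\infty)<\infty$ and there exist points $x$ with $x^*(\infty)>0$; for such spaces the Hardy-Littlewood-P\'olya relation is too coarse to register a failure of order continuity and the correspondence between proximinality of dominated sets and $(KOC)$ breaks down. Thus the substance of the proof is the implication $a_\psi=0\Rightarrow\Phi_{L^\psi}(\infty)=\infty$ supplied by Theorem~\ref{prop:Orlicz:KOC}$(i)$; once this is secured, the conclusion is a direct application of Theorem~3.13 and Corollaries~3.9 and~3.10 of \cite{Cies-JAT}.
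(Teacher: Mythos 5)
Your proposal is correct and follows essentially the same route as the paper, which derives this corollary directly from Corollaries 3.9 and 3.10 and Theorem 3.13 of \cite{Cies-JAT} combined with Theorem \ref{prop:Orlicz:KOC}. Your additional observation that the hypothesis $a_\psi=0$ enters precisely through Theorem \ref{prop:Orlicz:KOC}$(i)$ to secure $\Phi_{L^\psi}(\infty)=\infty$ (and hence the applicability of the abstract criterion) is exactly the intended reading of the paper's one-line justification.
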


Let us recall that a point $a\in{E}$ is called a $K$-upper bound of a subset $\mathcal{A}\subset{E}$ if for any $a'\in\mathcal{A}$ we have $a'\prec{a}$. If there exists a $K$-upper bound of a subset $\mathcal{A}\subset{E}$, then the set $\mathcal{A}$ is said to be $K$-bounded above (see \cite{Cies-JAT}).

\begin{corollary}
Let $\psi$ be an Orlicz function and in case when $\alpha=\infty$, $a_\psi=0$. The conditions are equivalent.
\begin{itemize}
	\item[$(i)$]  For any $x\in{L^\psi}$ and $\mathcal{A}\subset{L^\psi}$ a closed $K$-bounded above subset such that $x\prec\mathcal{A}$, $a^*\in\mathcal{A}$ for any $a\in\mathcal{A}$ we have $P_\mathcal{A}(x^*)$ is proximinal.
	\item[$(ii)$] $\psi$ satisfies $\Delta_2$ condition and if $\alpha=\infty$, then $\psi$ is $N$-function at zero.
\end{itemize}
\end{corollary}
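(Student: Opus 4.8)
The plan is to route the equivalence through $K$-order continuity of the Orlicz space: I will show $(i)\Longleftrightarrow L^\psi\in(KOC)\Longleftrightarrow(ii)$, so that the corollary becomes a synthesis of Theorem~\ref{prop:Orlicz:KOC} with the general description of $K$-order continuity in terms of the best dominated approximation problem from \cite{Cies-JAT}.

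For $(ii)\Longleftrightarrow L^\psi\in(KOC)$, I would first exploit the standing hypothesis that $a_\psi=0$ whenever $\alpha=\infty$: by Theorem~\ref{prop:Orlicz:KOC}$(i)$ this forces $\Phi_{L^\psi}(\infty)=\infty$ in the case $I=[0,\infty)$, while in the case $I=[0,1)$ the clause $\Phi_{L^\psi}(\infty)=\infty$ is vacuous. Hence the conjunction ``$L^\psi$ is $K$-order continuous and $\Phi_{L^\psi}(\infty)=\infty$ if $I=[0,\infty)$'' appearing on the left-hand side of Theorem~\ref{prop:Orlicz:KOC}$(ii)$ reduces to $L^\psi\in(KOC)$. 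Theorem~\ref{prop:Orlicz:KOC}$(ii)$ then reads: $L^\psi\in(KOC)$ if and only if $\psi$ satisfies $\Delta_2$ and, when $\alpha=\infty$, $\psi$ is an $N$-function at zero, which is exactly condition $(ii)$. Note that the assumption $a_\psi=0$ is used only for the implication toward $(ii)$; in the converse direction $\psi\in\Delta_2$ already gives $a_\psi=0$, as recalled in the preliminaries.

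For $(i)\Longleftrightarrow L^\psi\in(KOC)$, I would invoke Theorem~3.13 in \cite{Cies-JAT} together with Corollaries~3.9 and 3.10 there, which characterize $K$-order continuity of a symmetric space $E$ with the Fatou property by the proximinality of $P_\mathcal{A}(x^*)$ for every $x\in E$ and every closed, $K$-bounded above set $\mathcal{A}\subset E$ with $x\prec\mathcal{A}$ and $a^*\in\mathcal{A}$ for all $a\in\mathcal{A}$. Since $L^\psi$ is a symmetric Banach function space with the Fatou property, applying this characterization with $E=L^\psi$ identifies condition $(i)$ with $L^\psi\in(KOC)$, and chaining the two equivalences finishes the proof. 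The only point that requires attention is the bookkeeping of hypotheses, namely verifying that the cited statement from \cite{Cies-JAT} applies verbatim to $L^\psi$ and that the ``$K$-bounded above'' and ``$a^*\in\mathcal{A}$'' clauses in $(i)$ coincide with those in its formulation; no new estimates are involved, so I expect no genuine obstacle beyond this matching of assumptions.
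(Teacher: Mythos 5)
Your proposal matches the paper's own argument: the paper derives this corollary exactly by combining the proximinality characterization of $K$-order continuity from Corollaries~3.9, 3.10 and Theorem~3.13 in \cite{Cies-JAT} with Theorem~\ref{prop:Orlicz:KOC}, which is precisely your factorization $(i)\Leftrightarrow L^\psi\in(KOC)\Leftrightarrow(ii)$. Your extra bookkeeping---using $a_\psi=0$ together with Theorem~\ref{prop:Orlicz:KOC}$(i)$ to absorb the clause $\Phi_{L^\psi}(\infty)=\infty$, and noting that $\Delta_2$ gives $a_\psi=0$ in the converse direction---is correct and only makes explicit what the paper leaves implicit.
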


\subsection*{Acknowledgments}
\begin{flushleft}
  This research is supported by the grant 2017/01/X/ST1/01036 from National Science Centre, Poland.	
\end{flushleft}

$\begin{array}{lr}
\textnormal{\small Maciej Ciesielski}\\
\textnormal{\small Institute of Mathematics}\\
\textnormal{\small Pozna\'{n} University of Technology} \\
\textnormal{\small Piotrowo 3A, 60-965 Pozna\'{n}, Poland}\\
\textnormal{\small email: maciej.ciesielski@put.poznan.pl} 
\end{array}$


\begin{thebibliography}{99}
	
	\bibitem{BS} C. Bennett and R. Sharpley, \textit{Interpolation of operators}, Pure and Applied Mathematics, 129. Academic Press, Inc., Boston, MA, 1988.
		
	\bibitem{Cal} A. P. Calder\'{o}n, \textit{Intermediate spaces and interpolation, the complex method}, Studia Math. \textbf{24} (1964),
	113-190.
	
	\bibitem{CerHudzKamMas} J. Cerd\`{a}, H. Hudzik, A. Kami\'nska and M. Masty\l o, \textit{Geometric properties of symmetric spaces with applications to Orlicz-Lorentz spaces}, Positivity \textbf{2} (1998), no. 4, 311-337.
	
	\bibitem{ChDSS} V. I. Chilin, P. G. Dodds, A. A. Sedaev, and F. A. Sukochev, \textit{Characterizations of Kadec-Klee properties in symmetric spaces of measurable functions}, Trans. Amer. Math. Soc. \textbf{348} (1996), no. 12, 4895-4918.
	
	\bibitem{Cies-K-mono} M. Ciesielski, \textit{Lower and upper local uniform $K$-monotonicity in symmetric spaces}, Banach J. Math. Anal., advance publication, 19 December 2017. doi:10.1215/17358787-2017-0047. https://projecteuclid.org/euclid.bjma/1513674116.
	
	\bibitem{Cies-SKM&KOC} M. Ciesielski, \textit{Strict $K$-monotonicity and $K$-order continuity in symmetric spaces}, Positivity (2017) https://doi.org/10.1007/s11117-017-0540-7
	
	\bibitem{Cies-JAT} M. Ciesielski, \textit{Hardy-Littlewood-P\'olya relation in the best dominated approximation in symmetric spaces}, J. Approx. Theory \textbf{213} (2017), 78-91.
	
	\bibitem{CieKolPan} M. Ciesielski, P. Kolwicz and A. Panfil, \textit{Local monotonicity structure of symmetric spaces with applications}, J. Math. Anal. Appl. \textbf{409} (2014), no. 2, 649-662.
	
	\bibitem{CieKolPlu} M. Ciesielski, P. Kolwicz and R. P\l uciennik, \textit{Local approach to Kadec-Klee properties in symmetric function spaces}, J.Math. Anal. Appl. \textbf{426} (2015), no. 2, 700-726.
	
	\bibitem{CieKolPluSKM} M. Ciesielski, P. Kolwicz and R. P\l uciennik, \textit{A note on strict $K$-monotonicity of some symmetric function spaces}, Comm. Math. \textbf{53} (2013), no. 2, 311-322.  

    \bibitem{CieLewUKM} M. Ciesielski and G. Lewicki, \textit{Uniform $K$-monotonicity and $K$-order continuity in symmetric spaces with application to approximation theory}, J. Math. Anal. Appl. \textbf{456} (2017), no. 2, 705-730.
	
	\bibitem{CzeKam} M. M. Czerwi\'nska, A. Kami\'nska, \textit{Complex rotundities and midpoint local uniform rotundity in symmetric spaces of measurable operators}, Studia Math. \textbf{201} (2010), no. 3, 253-285.
	
	\bibitem{CHK} Y. Cui, H. Hudzik and W. Kowalewski, \textit{On fully rotundity properties and approximative compactness in some Banach sequence spaces}, Indian J. Pure Appl. Math. \textbf{34} (2003), no. 1, 17–30.
	
	\bibitem{DSS}  P.G. Dodds, E.M. Semenov and F.A. Sukochev, \textit{The Banach-Saks property in rearrangement invariant spaces}, Studia Math. \textbf{162} (2004), no. 3, 263-294.

	\bibitem{FaGl} K. Fan and I. Glicksberg, \textit{Some geometric properties of the spheres in a normed linear space}, Duke Math. J. \textbf{25} (1958), 553-568.
	
	\bibitem{GK} A. Gogatishvili and R. Kerman, \textit{The rearrangement-invariant space $\Gamma_{p,\phi}$}, (English summary) Positivity \textbf{18} (2014), no. 2, 319-345.
	\bibitem{hkm-geo-prop} H. Hudzik, A. Kami\'nska and M. Masty\l o, \textit{On geometric properties of Orlicz-Lorentz spaces}, Canad. Math. Bull. \textbf{40} (1997), no. 3, 316-329.
	
	\bibitem{HuKoLe}  H. Hudzik, W. Kowalewski, and G. Lewicki, \textit{Approximate compactness and full rotundity in Musielak-Orlicz spaces and Lorentz-Orlicz spaces}, Z. Anal. Anwend. \textbf{25} (2006), no. 2, 163-192.
	
	\bibitem{KMGam} A. Kami\'{n}ska and L. Maligranda, \textit{On Lorentz spaces 
	}$\Gamma _{p,w}$, Israel J. Math. 140 (2004), 285-318.
	
	\bibitem{KamMal} A. Kami\'nska and L. Maligranda, \textit{Order convexity and concavity of Lorentz spaces $\Lambda_{p,w}$, $0<p<\infty$}, Studia Math. \textbf{160} (2004), no. 3, 267-286.
	
	\bibitem{KraRut} M. A. Krasnoselski\u{\i} and Ja. B. Ruticki\u{\i}, \textit{Convex functions and Orlicz spaces. Translated from the first Russian edition by Leo F. Boron}, P. Noordhoff Ltd., Groningen 1961
	
	\bibitem{KPS} S. G. Krein, Yu. I. Petunin and E. M. Semenov, \textit{Interpolation of linear operators}, Translated from the Russian by J. Sz\H{u}cs. Translations of Mathematical Monographs, 54. American Mathematical Society, Providence, R.I., 1982.
	
	\bibitem{LinTza} J. Lindenstrauss and L. Tzafriri, \textit{Classical Banach spaces. II. Function spaces}, Ergebnisse der Mathematik und ihrer Grenzgebiete [Results in Mathematics and Related Areas], 97. Springer-Verlag, Berlin-New York, 1979.
	
	\bibitem{Loren} G. G. Lorentz, \textit{On the theory of spaces }$\Lambda $, Pacific J. Math. \textbf{1} (1951), 411-429.
	
	\bibitem{Meggin} R.E. Megginson, \textit{An introduction to Banach space theory}, (English summary) Graduate Texts in Mathematics, 183. Springer-Verlag, New York, 1998.
	
	\bibitem{Royd} H. L. Royden, \textit{Real analysis}, Third edition, Macmillan Publishing Company, New York, 1988.
	
	\bibitem{Haaker} A. Sparr, \textit{On the conjugate space of the Lorentz space $L(\phi,q)$}, (English summary) Interpolation theory and applications, 313-336, 
	Contemp. Math., 445, Amer. Math. Soc., Providence, RI, 2007.
	
\end{thebibliography}
\end{document}